\documentclass[
10pt, 
a4paper, 
oneside, 
headinclude,footinclude, 
]{article}

\usepackage[
nochapters, 
pdfspacing, 
dottedtoc 
]{classicthesis} 
\usepackage{amsopn}
\usepackage[T1]{fontenc} 
\usepackage{multirow}
\usepackage[utf8]{inputenc} 
\usepackage{hhline}
\usepackage{amsmath,amsfonts,amssymb,amsthm,epsfig,epstopdf,titling,url,array}
\usepackage{graphicx} 
\usepackage[a4paper,top=5cm,bottom=5cm,left=4cm,right=4cm]{geometry}
\graphicspath{{Figures/}} 
\usepackage{indentfirst}
\usepackage{enumitem} 
\usepackage{savesym}
\usepackage{mathrsfs}
\usepackage{nicefrac}
\usepackage{esint}
\usepackage{subfig} 

\usepackage{amsmath,amssymb,amsthm,amsfonts} 
\usepackage{geometry}
\geometry{hmargin={2.4 cm,2.4 cm},vmargin={1.8cm,1.8cm},includehead,includefoot}

\usepackage{varioref} 

\usepackage{thmtools}
\usepackage{thm-restate}

\usepackage{hyperref}

\theoremstyle{plain}
\newtheorem{teorema}{Theorem}[section]
\newtheorem{proposizione}[teorema]{Proposition}
\newtheorem{lemma}[teorema]{Lemma}
\newtheorem{corollario}[teorema]{Corollary}

\theoremstyle{definition}
\newtheorem{definizione}{Definition}[section]

\theoremstyle{remark}
\newtheorem{osservazione}{Remark}[section]

\newcommand{\N}{\mathbb{N}}
\newcommand{\Z}{\mathbb{Z}}

\newcommand{\R}{\mathbb{R}}

\newcommand{\res}


\DeclareMathOperator*{\lip}{Lip_1}

\DeclareMathOperator*{\cl}{cl}
\DeclareMathOperator*{\diam}{diam}


\hypersetup{
colorlinks=true, breaklinks=true, bookmarks=true,bookmarksnumbered,
urlcolor=webbrown, linkcolor=RoyalBlue, citecolor=webgreen, 
pdftitle={}, 
pdfauthor={\textcopyright}, 
pdfsubject={}, 
pdfkeywords={}, 
pdfcreator={pdfLaTeX}, 
pdfproducer={LaTeX with hyperref and ClassicThesis} 
}


\title{\normalfont\spacedallcaps{Full non-differentiability sets of typical Lipschitz functions}} 

\author{\spacedlowsmallcaps{Andrea Merlo\textsuperscript{*}}} 

\date{} 


\begin{document}


\renewcommand{\sectionmark}[1]{\markright{\spacedlowsmallcaps{#1}}} 
\lehead{\mbox{\llap{\small\thepage\kern1em\color{halfgray} \vline}\color{halfgray}\hspace{0.5em}\rightmark\hfil}} 

\pagestyle{scrheadings} 


\maketitle 

\setcounter{tocdepth}{2} 
\setcounter{secnumdepth}{2}




\paragraph*{Abstract} 
In this paper we prove that the typical Lipschitz function has no directional derivative at any point of a Borel set $E$ if and only if $E$ is contained in a countable union of closed purely unrectifiable sets.
\paragraph{Keywords} Lipschitz functions, differentiability, residuality, 
\paragraph*{MSC (2010)}  $26B05$, $46G05$, $54E52$.

{\let\thefootnote\relax\footnotetext{* \textit{Scuola Normale Superiore, Piazza dei Cavalieri, 7, Pisa, Italy.}\\
\text{ }\textit{ e-mail}: \texttt{andrea.merlo@sns.it}}
}


\section{Introduction}
The characterisation of the non-differentiability sets of real valued Lipschitz functions on the real line goes back to Zahorski who proved in \cite{zahorski} that:
\begin{teorema}
For any $G_{\delta\sigma}$ subset of the real line of Lebesgue measure zero $E$, there exists a Lipschitz function $f:\R\to\R$ which is non-differentiable everywhere on $E$ and differentiable everywhere on $\R\setminus E$.

Viceversa, given a Lipschitz function $f:\R\to \R$, the set of points at which $f$ is non-differentiable is a $G_{\delta \sigma}$ Lebesgue-null set.
\label{zar}
\end{teorema}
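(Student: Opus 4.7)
For the converse direction, my starting point is that a Lipschitz function $f:\R\to\R$ is absolutely continuous, so by the classical Lebesgue differentiation theorem for monotone (or absolutely continuous) functions it is differentiable almost everywhere; this immediately gives that the non-differentiability set has Lebesgue measure zero. For the $G_{\delta\sigma}$ classification I would use a standard descriptive set-theoretic identification. Since $f$ is continuous, the difference quotient $D_hf(x):=(f(x+h)-f(x))/h$ is continuous in $x$ for each fixed $h\ne 0$, and the Lipschitz bound on $f$ ensures that the existence of the limit $f'(x)$ is equivalent to the Cauchy condition for the net $\{D_hf(x)\}_{h\to 0}$. Writing this out,
\[
\{x : f'(x) \text{ exists}\}=\bigcap_{n}\bigcup_{k}\bigcap_{0<|h|,|h'|<1/k}\Bigl\{x:|D_hf(x)-D_{h'}f(x)|\le 1/n\Bigr\}.
\]
For each fixed $n,k$ the innermost set is an intersection of closed sets and hence closed, so the whole expression is $F_{\sigma\delta}$ and its complement, the non-differentiability set, is $G_{\delta\sigma}$.

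For the direct direction I plan a two-stage construction. First, decompose the $G_{\delta\sigma}$ null set as $E=\bigcup_k E_k$ with each $E_k$ a null $G_\delta$ set, and further write $E_k=\bigcap_n U_{k,n}$ with $U_{k,n+1}\subseteq U_{k,n}$ open and $|U_{k,n}|\le 2^{-n}$. For each $k$ I want to build a Lipschitz function $f_k$ non-differentiable at every point of $E_k$ and differentiable on its complement, with a uniform lower bound $\omega_k>0$ on the oscillation $\limsup_{h\to 0}D_hf_k(x)-\liminf_{h\to 0}D_hf_k(x)$ at every $x\in E_k$. The natural candidate is a series $f_k=\sum_n a_n\psi_{k,n}$ of piecewise linear sawtooth bumps $\psi_{k,n}$ supported in $U_{k,n}$ with amplitudes $a_n\to 0$, designed so that for every $x\in E_k$ two fixed slopes of opposite sign are attained at arbitrarily small scales near $x$, while for $x\notin E_k$ one has $x\notin U_{k,n_0}$ for some $n_0$, so only finitely many terms contribute near $x$ and the function is piecewise linear there.

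Assuming the single-set construction, I would obtain $f$ by summing $f:=\sum_k\lambda_k f_k$ with weights $\lambda_k$ chosen so that $\sum_k\lambda_k\mathrm{Lip}(f_k)<\infty$ (making $f$ Lipschitz) and so that at each $x\in E_k$ the oscillation contributed by $\lambda_kf_k$ strictly exceeds the combined Lipschitz contributions of the other $\lambda_jf_j$ in an arbitrarily small neighborhood of $x$; this transfers non-differentiability to $f$ on all of $E$, while differentiability of each $f_j$ at points of $\R\setminus E$ passes to $f$ by a uniform summability argument on the difference quotients. The main obstacle is precisely the construction of the functions $f_k$: arranging non-differentiability almost everywhere on $E_k$ is easy, but arranging it at \emph{every} point of $E_k$ with a quantitative uniform lower bound on the oscillation, while simultaneously preserving differentiability on the non-open complement $\R\setminus E_k$, demands an intricate coordination of the amplitudes $a_n$ with the geometry of the nested open sets $U_{k,n}$ (so that outside $E_k$ the tails really contribute a vanishing, not merely bounded, oscillation). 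This quantitative calibration is the heart of Zahorski's original construction.
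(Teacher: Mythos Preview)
The paper does not prove this theorem at all: it is stated in the introduction as a classical background result and attributed to Zahorski with a citation to his 1946 paper \cite{zahorski}. There is therefore no ``paper's own proof'' to compare your proposal against.

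That said, your outline is the standard route. The converse direction is essentially complete: Rademacher (or Lebesgue for absolutely continuous functions) gives the null-measure assertion, and your Cauchy-criterion expression correctly exhibits the set of differentiability as $F_{\sigma\delta}$, hence its complement as $G_{\delta\sigma}$. For the direct direction you have correctly identified both the architecture (sawtooth series on nested open covers of each $G_\delta$ piece, then a weighted sum over the pieces) and the genuine difficulty, namely the quantitative calibration ensuring that on $E_k$ the oscillation survives the tail, while on $\R\setminus E$ the tails contribute vanishing rather than merely bounded oscillation. One point to watch in the summation step: differentiability of each $f_j$ at $x\in\R\setminus E$ does not automatically pass to the sum, and a bare bound $\sum\lambda_j\mathrm{Lip}(f_j)<\infty$ is not enough; you need the stronger arrangement that for each $x\notin E$ all but finitely many $f_j$ are affine (or at least have uniformly controlled second differences) near $x$, which is what Zahorski's construction actually delivers. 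With that refinement your plan matches the classical argument.
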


At this point is quite natural to ask whether any similar characterisation is available for Lipschtiz maps $f:\R^n\to\R^m$. As it turns out already when the domain is $\R^2$, the answer is much more complicated and only partially known. Indeed M. Doré and O. Maleva in \cite{MalevaDore} and \cite{MalevaDore1} constructed a compact set with Hausdorff dimension $1$ in the $\R^n$ on which every Lipschitz function has a differentiability point (the first Lebesgue-null set with this property was first constructed by D. Preiss in \cite{Preiss1}).

In order to solve the problem, one could hope to use the intuitive idea that the typical Lipschitz functions have the worst differentiability behaviour, and thus the problem may be solved by means of the Baire Cathegory Theorem on a suitable space of Lipschitz functions.
This approach was attempted in 1995 by D. Preiss and J. Ti\u{s}er in \cite{PreissTiser} where they showed that:

\begin{teorema}[Preiss, Ti\u{s}er]
Let $E\Subset (0,1)$ be an analytic set. The following are equivalent:
\begin{itemize}
\item[(i)]$E$ is contained in an $F_\sigma$ subset of $[0,1]$ with Lebesgue measure zero.
\item[(ii)]The set $S$ of those $1$-Lipschitz functions differentiable at no point of $E$ is residual in $\lip([0,1],\R)$, the space of $1$-Lipschitz functions on $[0,1]$ with values in $\R$ endowed with the supremum norm.
\end{itemize}\label{isp}
\end{teorema}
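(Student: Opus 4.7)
I prove the two implications separately.

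For $(i) \Rightarrow (ii)$, write $E \subset F = \bigcup_{m} F_m$ with each $F_m \subset [0,1]$ compact and Lebesgue-null, and decompose the ``bad'' set $D := \{f \in \lip([0,1],\R) : f \text{ is differentiable at some } x \in E\}$ as a countable union of closed sets. Fix a large constant $n_0$ (say $n_0 = 10$) and set
\[
B_{m,k} := \left\{f \in \lip : \exists\, x \in F_m,\; c \in [-1,1],\; \forall\, y \in [0,1]\cap[x-1/k,\, x+1/k],\; |f(y)-f(x)-c(y-x)| \le |y-x|/n_0 \right\}.
\]
Then $D \subset \bigcup_{m,k} B_{m,k}$, and each $B_{m,k}$ is closed in $\lip$ by compactness of $F_m \times [-1,1]$ combined with uniform convergence. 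The crux is the following perturbation lemma: given $f \in \lip$ and $\varepsilon > 0$, there is $g \in \lip$ with $\|g-f\|_\infty < \varepsilon$ such that at every $x \in F_m$ the upper Dini derivative of $g$ is $\geq 1/3$ and the lower Dini derivative $\leq -1/3$; such $g$ automatically avoids $B_{m,k}$ once $n_0 > 3$.

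To build $g$, I cover $F_m$ by a finite family of pairwise disjoint open intervals $(a_i, b_i) \subset (0,1)$ with $\max_i (b_i - a_i) < \varepsilon/3$ (possible since $F_m$ is compact and null), set $g = f$ outside $U = \bigsqcup_i (a_i, b_i)$, and on each $(a_i, b_i)$ replace $f$ by a 1-Lipschitz zigzag $g_i$ with slopes in $\{-1,+1\}$ matching $f$ at $a_i$ and $b_i$. I place the switch-points of $g_i$ at dyadic scales $y_j \pm 2^{-k}$ (for $k$ large) around a countable dense sequence $\{y_j\}$ in $F_m$, so that at every $x \in F_m$ the corners of $g$ accumulate from both sides in a self-similar pattern forcing the difference quotients $(g(y)-g(x))/(y-x)$ to take values near $\pm 1/3$ at arbitrarily small scales. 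The triangle inequality $|g(x) - f(y)| \le (b_i - x) + (y - b_i) = y - x$ (for $x \in (a_i, b_i)$ and $y \ge b_i$), using $g(b_i) = f(b_i)$, guarantees $g$ is globally 1-Lipschitz, and $\|g-f\|_\infty \le 2 \max_i(b_i - a_i) < \varepsilon$. The lemma gives that each $B_{m,k}$ is closed and nowhere dense; hence $D$ is meager and $S$ is residual.

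For the converse $(ii) \Rightarrow (i)$, I argue contrapositively. If $|E|^* > 0$, by inner regularity of analytic sets (Choquet capacitability) $E$ contains a compact $K$ with $|K| > 0$; since every 1-Lipschitz function is differentiable almost everywhere, every $f \in \lip$ is differentiable at some $x \in K \subset E$, whence $S = \emptyset$ and is not residual. If instead $|E|^* = 0$ but $E$ is not contained in any $F_\sigma$ null set, one must show that the set of $f \in \lip$ with $D(f) \cap E \neq \emptyset$ (with $D(f)$ the differentiability set of $f$) is not meager. Here one couples the Baire property of analytic sets, which forces $E$ to be ``Baire-categorically large'' inside some open subinterval of $[0,1]$, with a Kuratowski-Ulam argument on $\lip \times [0,1]$.

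The main obstacle I expect is this second sub-case of the converse direction, i.e.\ a null but non-$F_\sigma$-null analytic set $E$. The zigzag perturbation of the forward direction fails, because no open set of arbitrarily small measure contains $E$; one must replace the explicit construction by a more delicate category-theoretic existence argument coupling the Baire structures on $\lip([0,1],\R)$ and $[0,1]$, and it is precisely here that the analyticity of $E$ (through Choquet capacitability and the Baire property) plays its essential role.
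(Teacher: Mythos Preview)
This theorem is quoted in the paper from Preiss--Ti\u{s}er rather than proved directly, but the paper's main result (Theorem~\ref{main}) specialises to it when $n=m=1$, so the relevant comparison is with Propositions~\ref{evev} and~\ref{BMstrategia}. Against that benchmark your plan has a real error in each direction.

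For $(i)\Rightarrow(ii)$, take $f=\mathrm{id}$. If $g$ is $1$-Lipschitz with $g(a_i)=a_i$ and $g(b_i)=b_i$, then $g(x)\le g(a_i)+(x-a_i)=x$ and $g(x)\ge g(b_i)-(b_i-x)=x$, so $g\equiv f$ on $[a_i,b_i]$ and your zigzag collapses; the same happens on any interval where $\lvert f(b_i)-f(a_i)\rvert=b_i-a_i$, and this cannot be avoided by choosing the cover more cleverly. The fix --- exactly what the paper does in Definition~\ref{rough} and Proposition~\ref{lung} via the factor $(1-\eta)$ --- is to first contract $f$ to $(1-\eta)f$, freeing up $\eta$ of Lipschitz budget for a genuine oscillatory perturbation supported near $F_m$. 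With that modification your $B_{m,k}$ decomposition is sound, though ``switch-points at $y_j\pm 2^{-k}$ for $\{y_j\}$ dense in $F_m$'' is still too loose to guarantee the Dini bounds at \emph{every} point of $F_m$.

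For $(ii)\Rightarrow(i)$ you correctly handle $\lvert E\rvert^*>0$, but the essential case --- $E$ null yet not contained in any $F_\sigma$ null set --- is left as an unexecuted sketch, and the sketch points in the wrong direction. The Baire property of $E$ inside $[0,1]$ carries no information here (a null analytic set can be comeager or meager in $[0,1]$ without constraint), and no Kuratowski--Ulam argument does the job. What is actually needed is Solecki's covering theorem (Theorem~\ref{tSoleki}): it produces a closed $F$ with $E\cap F$ comeager in $F$ and \emph{every portion of $F$ of positive measure}. One then plays a Banach--Mazur game on $\lip([0,1],\R)$ (Proposition~\ref{BMstrategia} with $n=1$): Player~II picks piecewise-isometric $f_k$ and shrinking relatively open portions $M_k\subset F$, using the maximal-function estimate of Theorem~\ref{lemma1} --- two piecewise isometries close in sup norm have close derivatives except on a set of small measure --- to ensure that each $M_k$ survives inside the previous one. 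The limit $f=\lim f_k$ is then differentiable at every point of $\bigcap M_k\subset E$.
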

Theorem \ref{isp} is both good and bad news. On the one hand it shows that if $E$ is covered by countably many closed Lebesgue-null sets, then the Baire Cathegory Theorem produces non-differentiable functions on $E$. On the other, if $E$ does not satisfy this topological condition (for istance if $E\subseteq [0,1]$ is residual and Lebesgue-null), they proved that the typical Lipschitz function \emph{has} a point of differentiability in $E$, showing that this topological approach cannot tell the full story, in view of Zahorski's theorem.

There are many possible generalisations of the above result in higher dimensions. The one in which we are interested is the following: is it possible to build a map from $\R^n$ to $\R^n$ which is non-differentiable in any direction of a given purely unrectifiable Borel set $E$ by means of the Baire Category Theorem?
The answer, which is the main result of this paper, depends on the topological properties of the set $E$ too:

\begin{teorema}\label{prinprin}
Let $E\Subset (0,1)^n$ be an analytic set and let $n\leq m$. Then the follwing are equivalent:
\begin{itemize}
\item[(i)]$E$ is contained in a countable union of closed purely unrectifiable sets,
\item[(ii)] the set $S$ of those $1$-Lipschitz functions which are non-differentiable in every direction at every point of $E$ is residual in $(\lip([0,1]^n,\R^m),\lVert\cdot\rVert_\infty)$, the space of $1$-Lipschitz functions on $[0,1]^n$ with values in $\R^m$ endowed with the supremum norm.
\end{itemize}
\end{teorema}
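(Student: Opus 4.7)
For $(i)\Rightarrow(ii)$, since a countable intersection of residual sets is residual and a closed subset of a purely unrectifiable set is purely unrectifiable, it suffices to prove residuality with $E$ replaced by a single closed purely unrectifiable $K\subseteq[0,1]^n$. For $k,\ell\in\N$ define $B_{k,\ell}(K)\subseteq\lip([0,1]^n,\R^m)$ to consist of those $f$ for which there exist $x\in K$, $v\in S^{n-1}$ and $a\in\R^m$ with $|a|\leq 1$ such that
$$
|f(x+tv)-f(x)-ta|\leq|t|/k\quad\text{for every }t\in[-1/\ell,1/\ell]\text{ with }x+tv\in[0,1]^n.
$$
Compactness of $K\times S^{n-1}\times\overline B_{\R^m}(0,1)$ together with uniform convergence in $(\lip,\|\cdot\|_\infty)$ makes each $B_{k,\ell}(K)$ closed, and any $f\notin S$ — having some directional derivative at some $x\in K$ in some direction — lies in $B_{k,\ell}(K)$ for every $k$ and some $\ell=\ell(k)$. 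Thus $S^c\subseteq\bigcap_k\bigcup_\ell B_{k,\ell}(K)$, and $S$ will be residual as soon as each $B_{k,\ell}(K)$ has empty interior.

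Emptiness of interior is the principal technical obstacle. Given $f\in\lip$, $\epsilon>0$ and parameters $k,\ell$, one must produce $g\in\lip$ with $\|g-f\|_\infty<\epsilon$ such that for every triple $(x,v,a)\in K\times S^{n-1}\times\overline B_{\R^m}(0,1)$ there is some $t\in(0,1/\ell]$ with $|g(x+tv)-g(x)-ta|>|t|/k$. My plan is to construct $g$ as a small-amplitude, high-frequency oscillating perturbation of $f$ supported in a thin tubular neighborhood of $K$, kept $1$-Lipschitz by a McShane-type infimum-of-cones extension from carefully chosen boundary data; the amplitude and frequency are tuned to the parameters $k$ and $\ell$. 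Pure unrectifiability of $K$ enters at the decisive step: for each $x\in K$ and each $v\in S^{n-1}$ one needs the segment $\{x+tv:t\in(0,1/\ell]\}$ to exit any fixed neighborhood of $K$ on a set of positive $\mathcal{L}^1$ measure, since otherwise a positive length of the line through $x$ in direction $v$ would parametrize a rectifiable subset of $K$, contradicting the hypothesis. This is a higher-dimensional, vector-valued extension of the oscillation construction of Preiss and Ti\u{s}er underlying Theorem~\ref{isp}.

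For $(ii)\Rightarrow(i)$ I argue by contrapositive: assume $E$ is analytic but not covered by countably many closed purely unrectifiable subsets of $[0,1]^n$. The class of closed purely unrectifiable sets is hereditary and $\boldsymbol\Pi^1_1$-definable in the hyperspace $\mathcal{K}([0,1]^n)$, so the standard Choquet/Kechris dichotomy for $\sigma$-ideals on analytic sets yields a compact $K\subseteq E$ that is itself not purely unrectifiable. Hence there is a $1$-Lipschitz curve $\gamma:[0,1]\to[0,1]^n$ with $\mathcal{H}^1(K\cap\gamma([0,1]))>0$, and since $\gamma$ is $1$-Lipschitz this forces $\mathcal{L}^1(\gamma^{-1}(K))>0$ and $\gamma'\neq 0$ on a positive-measure subset of $\gamma^{-1}(K)$. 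For any $f\in\lip([0,1]^n,\R^m)$, Rademacher's theorem in one variable makes both $\gamma$ and $f\circ\gamma$ differentiable almost everywhere; at any density point $t_0\in\gamma^{-1}(K)$ with $\gamma'(t_0)\neq 0$ where both derivatives exist, the Lipschitz property of $f$ together with the Taylor expansion $\gamma(t_0+h)=\gamma(t_0)+h\gamma'(t_0)+o(h)$ gives that $s\mapsto f(\gamma(t_0)+sv)-f(\gamma(t_0))$ has derivative $(f\circ\gamma)'(t_0)/|\gamma'(t_0)|$ at $s=0$, where $v=\gamma'(t_0)/|\gamma'(t_0)|$. Thus $f$ admits a directional derivative at $\gamma(t_0)\in K\subseteq E$, so in fact $S=\emptyset$ and $S$ is certainly not residual.
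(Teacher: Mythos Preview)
Your proposal contains a genuine gap in each direction.

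\textbf{The implication $(ii)\Rightarrow(i)$.} The dichotomy you invoke is false. It is \emph{not} true that an analytic set failing (i) must contain a compact subset that is not purely unrectifiable. Here is a counterexample: let $E\subseteq(0,1)^2$ be a dense $G_\delta$ set with $\mathcal{H}^1(E)=0$ (obtained, for instance, as $\bigcap_n U_n$ where each $U_n$ is a union of balls around a countable dense set with radii summable enough to force Hausdorff dimension $0$). Then $E$ is trivially purely unrectifiable, and so is every compact subset of $E$. Yet $E$ cannot be covered by countably many closed purely unrectifiable sets: if $E\subseteq\bigcup_m F_m$ with each $F_m$ closed, then by the Baire category theorem some $F_m$ has nonempty interior in $[0,1]^2$, hence contains a line segment and is not purely unrectifiable. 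So your conclusion ``$S=\emptyset$'' cannot be reached; at best one can hope to show $S$ is meagre, and for this one needs the genuine Solecki dichotomy (Theorem~\ref{tSoleki}), which produces a closed set $F$ with $E\cap F$ comeagre in $F$ and every portion of $F$ of positive width. The paper then runs a Banach-Mazur game (Proposition~\ref{BMstrategia}) using piece-wise congruent mappings and maximal-function estimates to build, for the generic $f$, a directional derivative at some point of $E\cap F$.

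\textbf{The implication $(i)\Rightarrow(ii)$.} Your reduction to closed $K$ and the sets $B_{k,\ell}(K)$ is sound, but the decisive perturbation step is not. Your stated use of pure unrectifiability --- that the segment $\{x+tv:t\in(0,1/\ell]\}$ must exit any fixed neighbourhood $B_\delta(K)$ on a set of positive measure, ``since otherwise a positive length of the line would parametrize a rectifiable subset of $K$'' --- is incorrect: the segment lying in $B_\delta(K)$ for a.e.\ $t$ says nothing about it lying in $K$ itself. For any fixed $\delta$ and $\ell$ there will typically be many $(x,v)$ with the whole segment inside $B_\delta(K)$ (think of a four-corner Cantor set). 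What one actually needs is the \emph{uniform} statement of Proposition~\ref{equivequiv}: for compact purely unrectifiable $K$, every cone $C(e,\sigma)$ and every $\epsilon>0$, there is $\delta>0$ with $\mathcal{H}^1(\gamma(I)\cap B_\delta(K))<\epsilon$ for \emph{all} $C(e,\sigma)$-curves $\gamma$. This is what makes the width-function machinery of Alberti--Marchese (Proposition~\ref{alb}) available, and the paper then builds the perturbation $F_u^\eta$ of Definition~\ref{rough} using an auxiliary map $G$ whose incremental quotients approximate the identity on $K$ (Proposition~\ref{ide}). A naive oscillation supported in $B_\delta(K)$ does not survive the passage from $n=1$ to $n\geq 2$.
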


As in the one dimensional case, the proof of (ii)$\Rightarrow$(i) shows that if (i) does not hold, then the typical Lipschitz function \emph{has} a differentiability point in $E$. This gives an intuitive justification to why even the construction of fully non-differentiable functions on non-compact purely unrectifiable sets in \cite{ACP0.2} is so intricate.

\subsection*{Scheme of the proof}
The proof of the implication $(i)\Rightarrow (ii)$ of Theorem \ref{prinprin} heavily relies on techniques introduced in \cite{AlbertiMarchese} and in \cite{ACP0.2}. Fix $\epsilon>0$, $e\in\mathbb{S}^{2n-1}$ and a closed purely unrectifiable set $E$. It is possible (see Lemma 4.12  in \cite{AlbertiMarchese}) to build for any $e\in\mathbb{S}^{n-1}$ a $1$-Lipschitz function $g_e$ such that:
\begin{itemize}
    \item[($\alpha$)] $\lVert g_e\rVert_\infty\leq \epsilon$,
    \item[($\beta$)] $\lvert Dg_e(x)-e\rvert<\epsilon$ for any $x\in E$.
\end{itemize}
Using these functions it is not hard to construct maps $G:\R^n\to\R^n$ with small supremum norms and such that $DG(x)\approx \text{id}_n$ for any $x\in E$. Pick any smooth function $f$, and let $u\in\mathbb{S}^{n-1}$,  $v\in\mathbb{S}^{m-1}$ and define:
$$\tilde{f}(x):=f(x)-D f(x)[G(x)]+g_u(x)v.$$
The function $\tilde{f}$ is close to $f$ in the supremum topology, and on $E$ its derivative along the direction $v$ is near to $u$. In the end this construction (with some fine tuning) and the density of smooth functions in $\lip([0,1]^n,\R^m)$ prove the implication (i)$\Rightarrow$(ii).

To explain the proof of the implication (ii)$\Rightarrow $(i) we need to introduce the Banach-Mazur game first. Let $E$ be a set which cannot be covered by countably many compact purely unrectifiable sets and define the family of $1$-Lipschitz functions:
\begin{equation}
    \begin{split}
        B:=\{f\in\lip([0,1]^n,\R^m):\text{ there are }&x\in E,~ e\in\R^n\text{ s.t. }
        f(x+te)\text{ is differentiable at }t=0\}.
        \nonumber
    \end{split}
\end{equation}
Consider the following game with two players. Player (I) chooses an open set $U_1\subseteq \lip([0,1]^n,\R^m)$; then Player (II) chooses an open set $V_1\subseteq U_1$; then Player (I) chooses an open set $U_2\subseteq V_1$ and so on. Player (II) wins if $\bigcap_i V_i\subseteq B$, otherwise Player I wins. 
If we can build a winning strategy for Player (II), Theorem \ref{BMgioco} implies that the set $B$ is residual in $\lip([0,1]^n,\R^m)$. 

 The proof that $V_k$ can be chosen in such a fashion that Player (II) wins is based on the following two observations: 
\begin{itemize}
    \item[($\alpha^\prime$)] Theorem \ref{tSoleki} says that $E$ is residual in a closed set $F$ having any portion of positive width (see Definition \ref{defiwi}),
    \item[($\beta^\prime$)] if two continuous piece-wise congruent mappings (see Definition \ref{pwcongr}) are close in the supremum norm, then the set where their directional derivative along $e\in\mathbb{S}^{n-1}$ are not close, has small width with respect to the cone of axis $e$ (of a suitable amplitude).
\end{itemize}
Player II at each turn chooses piece-wise congruent mappings $f_k$, sets $M_k$ and directions $e_k$ (converging to some $e$) such that the sets $V_k$ are (small enough) balls centred at $f_k$. The turn of Player II starts by  arbitrarily picking a piece-wise congruent mapping $f_k$ in $U_k$. The direction $e_k$ is chosen close to $e_{k-1}$ in such a way that the width of $M_{k-1}$ along a cone of axis $e_{k}$ (of sufficiently small amplitude) is positive.
Eventually Player II must deal with the construction of $M_k$. Since $E\cap F$ is residual in $F$, we can find a sequence of relatively open sets $E_k$ in $F$ such that $E\cap F\subseteq \bigcap E_k$.  Let $G_k$ be the set given by point ($\beta^\prime$) which enjoys the two following properties:
\begin{itemize}
    \item[($\alpha^{\prime\prime}$)] the complement of $G_k$ has a complement with very small width,
    \item[($\beta^{\prime\prime}$)] on $G_k$ the function $f_k$ and $f_{k-1}$ have close derivatives along the direction $e_k$.
\end{itemize}
Player II defines $M_k$ to be a non-empty relatively open set in $F$, compactly contained in $E_{k-1}\cap M_{k-1}\cap G_k$. Moreover point ($\alpha^\prime$) insures that we can always find such an $M_k$ having positive width with respect to a cone with axis $e_k$. 

The functions $f_k$ are uniformly converging to some $f\in\lip([0,1]^n,\R^m)$ and the sets $M_k$ are constructed in such a way that their intersection is non-empty (thanks to the finite intersection property of compact sets), it is contained in $E\cap F$ and point ($\beta^{\prime\prime}$) implies that $f$ is differentiable along $e$ at any point of $\bigcap M_k$.

\subsection*{Related results}
The problem of the characterisation of non-differentiability sets of Lipschitz functions between Euclidean spaces has quite a long history, originally motivated by the attempt to prove a Rademacher-type theorems on Banach spaces (see for istance the monograph \cite{PLT}). The paper \cite{Preiss1} by D. Preiss could be arguably considered the first fundamental contribution to the theory, where among other things, he constructs a Lebesgue-null set in $\R^2$ on which every Lipschitz function has a differentiability point, showing that Rademacher's Theorem does not tell the full story.
In 2005 G. Alberti, M. Cs\"ornyei and D. Preiss announced in \cite{ACP0} and \cite{ACP0.1} a geometric characterisation of non-differentiability sets of Lipschitz functions and the proof that any Lebesgue-null set in $\R^2$ is contained in a non-differentiability set of some Lipschitz function $f:\R^2\to\R^2$. On the other hand, more recently D. Preiss and G. Speight proved in \cite{PreissSpeight} that for any $m< n$ there exists a Lebesgue-null set $\mathcal{N}\subseteq \R^n$ for which every Lipschitz map $f:\R^n\to \R^m$ has a point of differentiability on $\mathcal{N}$.

On the measure-theoretic side, in 2015 G. Alberti and A. Marchese proved in \cite{AlbertiMarchese} that the Rademacher Theorem can be extended to finite mass Borel measures (when the definition of differentiability is suitably weakened) and in 2016 G. De Philippis and F. Rindler showed in \cite{Afree} that if every Lipschitz function is differentiable $\mu$-a.e. in the standard sense then $\mu$ is absolutely continuous with respect to Lebesgue.

\subsection*{Structure of the paper}
In Section \ref{prelprel} we briefly give the definition of width and some of its properties, while Section \ref{S2} is devoted to state the main Theorem \ref{main} and reduce its proof to Proposition \ref{evev} and Proposition \ref{BMstrategia}. The two remaining sections will deal with the proof of these two propositions: the entire Section \ref{albalb} is devoted to the proof of Proposition \ref{evev} and in Section \ref{Ss4} is contained the proof of Proposition \ref{BMstrategia}.

\section*{Acknowledgements}
A first version of this work was written during my last undergraduate year which I spent at the University of Warwick with the support of the Erasmus+ program and the Scuola Galileiana di Studi Superiori in Padua. I am deeply grateful to David Preiss for suggesting me the problem and for his kind guidance.

\section*{Notation}
We add below a list of frequently used notations:
\medskip

\begin{tabular}{p{3cm} p{0.7\textwidth}}
$\lvert\cdot\rvert$ & Euclidean norm,\\
$\langle\cdot,\cdot\rangle$ & scalar product,\\
$\lVert\cdot\rVert$ & operatorial norm of matrices,\\
$\lVert\cdot\rVert_\infty$ & supremum norm of functions restricted to $[0,1]^n$,\\
$\mathbb{S}^n$ & unit sphere in $\R^{n+1}$,\\
$B_r(x)$ &  open ball of radius $r>0$ and centre x,\\
$B_\delta(A)$ & open neighbourhood of radius $\delta>0$ of the set $A$,\\
$\lip([0,1]^n,\R^m)$ & $1$-Lipschitz functions from $[0,1]^n$ to $\R^m$ (see Definition \ref{deffi}),\\
$\mathfrak{P}(n,m)$ & piece-wise congruent mappings (see Definition \ref{pwcongr}),\\
$M(n,m)$ & $n\times m$ matrices,\\
$O(n,m)$ & matrices representing linear isometries from $\R^n$ to $\R^m$,\\
$\mathcal{P}([0,1]^n)$ & power set of $[0,1]^n$,\\
$e^\perp$ & orthogonal hyperplane to the vector $e$,\\
$C(e,\sigma)$ & proper cone of amplitude $\sigma$ and axis $e$,\\
$\mathcal{L}^n$ & $n$-dimensional Lebesgue measure,\\ 
$\mathcal{H}^1$ & $1$-dimensional Hausdorff meausure.
\end{tabular}

\medskip

A set in a complete metric space is said to be \emph{analytic} if it is a continuos image of a complete metric space.
It is well known that every Borel set in $\R^n$ is analytic.

Finally we briefly recall some standard terminology used to denote the Baire category of sets throughout the paper. Let $(X,\mathcal{T})$ be a topological space and suppose $A\subseteq X$:
\begin{itemize}
\item[(i)] if $\text{int}(\text{cl}(A))=\emptyset$, $A$ is said to be \emph{nowhere dense},
\item[(ii)] if $A$ is the countable union of nowhere dense sets, $A$ is said to be \emph{meagre},
\item[(iii)] if $A$ is the complement of a meagre set, $A$ is said to be \emph{residual}. 
\end{itemize}

\section{Preliminary results}
\label{prelprel}

In this first section we recall used facts on purely unrectifiable sets and introduce the space of piece-wise congruent mappings $\mathfrak{P}(n,m)$, proving their density in $\lip([0,1]^n,\R^m)$.

\subsection{Width of sets and purely unrectifiable sets}

\begin{definizione}[Cones, proper cones]
An open set $C\subseteq \R^n$ is said to be a \emph{cone} if it is invariant under dilations and convex.

Let $\sigma\in(0,1)$ and $e\in\mathbb{S}^{n-1}$. The \emph{proper cone} of axis $e$ and amplitude $\sigma$ in $\R^n$ is the set:
\begin{equation}
C(e,\sigma):=\left\{x\in\R^n:\langle x,e\rangle>(1-\sigma)\lvert x\rvert\right\}.
\end{equation}
\end{definizione}

\begin{definizione}[Curves going in the direction of a cone]
\label{curv}
Let $\sigma\in(0,1)$ and $e\in\mathbb{S}^{n-1}$. Suppose $I\subseteq \R$ is a bounded interval and $\gamma:I\to \R^n$ is a Lipschitz map. We say that $\gamma$ is a  curve which \emph{goes in the direction of the cone} $C$, or that it is a $C$-\emph{curve}, if for any $s,t\in I$ such that $t<s$ we have:
$$\gamma(s)-\gamma(t)\in C.$$
\end{definizione}

\begin{figure}[ht!]
    \centering
    \includegraphics[scale=0.2]{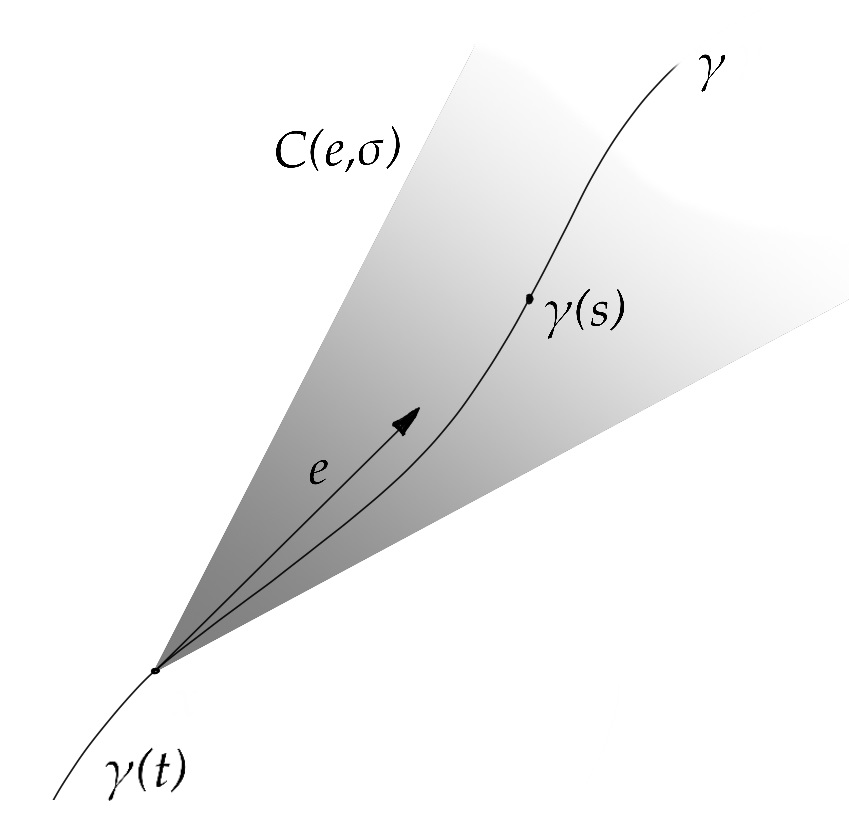}\qquad
    \caption{A $C(e,\sigma)$-curve $\gamma$.}
\end{figure}

\begin{definizione}
A Borel set $E$ is said to be \emph{purely unrectifiable} if for any Lipschitz curve $\gamma:\R\to\R^n$ we have:
$$\mathcal{H}^1(\gamma(\R)\cap E)=0.$$
\end{definizione}

The following proposition gives us an easy way to characterize pure unrectifiability (as curves with almost fixed derivative are easier to deal with then general Lipschitz curves).

\begin{proposizione}\label{equiv}
Let $E\subset\R^n$ be Borel. The following are equivalent:
\begin{itemize}
\item[(i)] $E$ is purely unrectifiable,
\item[(ii)]for any $\sigma>0$, $e\in\mathbb{S}^{n-1}$ and any $C(e,\sigma)$-curve $\gamma$ we have 
$\mathcal{H}^1(E\cap\gamma(I))=0$.
\end{itemize}
\end{proposizione}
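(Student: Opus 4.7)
The implication (i)$\Rightarrow$(ii) is immediate: a $C(e,\sigma)$-curve $\gamma:I\to\R^n$ is, by Definition \ref{curv}, a Lipschitz map defined on a bounded interval $I$, and extending it to $\R$ (say by setting it equal to its limit values outside $\bar I$) produces a Lipschitz $\bar\gamma:\R\to\R^n$ with $\bar\gamma(\R)\supseteq\gamma(I)$; pure unrectifiability of $E$ then gives $\mathcal H^1(E\cap\gamma(I))\le\mathcal H^1(E\cap\bar\gamma(\R))=0$.

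For (ii)$\Rightarrow$(i), I consider a Lipschitz $\gamma:[0,T]\to\R^n$ (since $\gamma(\R)$ is a countable union of such pieces), and after reparametrising by arclength I assume $|\gamma'|=1$ almost everywhere. Fix $\sigma\in(0,1)$ and choose finitely many $e_1,\dots,e_N\in\mathbb{S}^{n-1}$ whose cones $C(e_i,\sigma/2)$ cover $\mathbb{S}^{n-1}$. By Rademacher's theorem $[0,T]$ is, up to a Lebesgue-null set, the union of
\[A_i:=\{t\in[0,T]:\gamma'(t)\in C(e_i,\sigma/2)\},\]
and the $\gamma$-image of the exceptional null set has no $\mathcal H^1$-mass because $\gamma$ is Lipschitz. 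It thus suffices to show $\mathcal H^1(E\cap\gamma(A_i))=0$ for each $i$.

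The essential difficulty is that the pointwise condition $\gamma'\in C(e_i,\sigma/2)$ on $A_i$ does not directly yield the slope condition $\gamma(t)-\gamma(s)\in C(e_i,\sigma)$ for $s<t\in A_i$, because $\int_s^t\gamma'$ can pick up uncontrolled contributions from $[s,t]\setminus A_i$. I would bridge this gap with Lebesgue density: for a.e.\ $t_0\in A_i$ one has $|A_i\cap B_\rho(t_0)|/(2\rho)\to 1$. An Egorov-type selection produces, for any $\epsilon>0$, a Borel $A_i^\epsilon\subseteq A_i$ with $|A_i\setminus A_i^\epsilon|$ arbitrarily small and a uniform radius $\rho_0>0$ such that $|A_i\cap B_\rho(t_0)|>(1-\epsilon)\,2\rho$ for every $t_0\in A_i^\epsilon$ and $\rho<\rho_0$. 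Choosing $\epsilon<\sigma/8$ and partitioning $A_i^\epsilon$ into finitely many $B_j$'s of diameter less than $\rho_0$, for $s<t$ in the same $B_j$ one computes
\[\langle\gamma(t)-\gamma(s),e_i\rangle\;\geq\;(1-\sigma/2)(1-2\epsilon)|t-s|-2\epsilon|t-s|\;>\;(1-\sigma)|t-s|\;\geq\;(1-\sigma)|\gamma(t)-\gamma(s)|,\]
which is exactly $\gamma(t)-\gamma(s)\in C(e_i,\sigma)$.

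To conclude, I linearly interpolate $\gamma|_{B_j}$ on each gap of $B_j$ inside $[\inf B_j,\sup B_j]$: convexity of $C(e_i,\sigma)$ together with the slope condition just established guarantees that the interpolated map $\tilde\gamma_j$ is an actual $C(e_i,\sigma)$-curve whose image contains $\gamma(B_j)$. Hypothesis (ii) then forces $\mathcal H^1(E\cap\gamma(B_j))=0$. A countable choice of approximants $A_i^{\epsilon,k}$ with $|A_i\setminus A_i^{\epsilon,k}|<1/k$ exhausts $A_i$ up to a Lebesgue-null set, and summing over $i$, $k$ and the finitely many $B_j$ at each stage yields $\mathcal H^1(E\cap\gamma([0,T]))=0$. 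The main technical obstacle is precisely the density/Egorov step turning a measurable pointwise condition into the geometric slope condition; everything afterwards is bookkeeping and the convexity of $C(e_i,\sigma)$.
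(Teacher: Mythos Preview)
The paper states this proposition without proof, treating it as an easy preliminary fact, so there is nothing to compare your argument against directly.

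Your proof is correct. For (ii)$\Rightarrow$(i) you have correctly located the only genuine issue: upgrading the almost-everywhere pointwise condition $\gamma'\in C(e_i,\sigma/2)$ to the secant condition $\gamma(t)-\gamma(s)\in C(e_i,\sigma)$ required by Definition~\ref{curv}; your uniform-density sets $A_i^\epsilon$ and the accompanying estimate do this. Two small points worth tightening: (a)~to land in the \emph{open} cone after interpolating on $\overline{B_j}$, observe that $\overline{C(e_i,\sigma)}\setminus\{0\}\subset C(e_i,\sigma')$ for any $\sigma'>\sigma$, which is harmless since (ii) is assumed for every $\sigma$; (b)~your ``Egorov-type selection'' is really just the exhaustion of $A_i$ (up to a null set) by the increasing sets $\{t\in A_i:|A_i\cap B_\rho(t)|>(1-\epsilon)\,2\rho\text{ for all }\rho\le 1/n\}$, so no genuine appeal to Egorov's theorem is needed. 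A shorter packaging of the same idea---likely what the author has in mind by calling the result easy---is to first replace the Lipschitz curve by countably many $C^1$ curves via the $C^1$-Lusin approximation of Lipschitz maps, after which continuity of $\gamma'$ immediately gives, near any point with $\gamma'(t_0)\neq 0$, an interval on which $\gamma$ is a $C(e,\sigma)$-curve. This is not truly simpler, since the $C^1$-Lusin theorem itself rests on the very density argument you spelled out, but it allows one to cite a standard result rather than rebuild it.
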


Fixed a proper cone $C(e,\sigma)$ for any $C(e,\sigma)$-curve $\gamma$ we wish to fix a standard parametrisation for the image of $\gamma$.

\begin{definizione}
Let $C(e,\sigma)$ be a proper cone and $\gamma:I=(a,b)\to\R^n$ be a $C(e,\sigma)$-curve. Defined $T:=\langle e, \gamma(b)-\gamma(a)\rangle$, we say that $\tilde{\gamma}:\tilde{I}=(0,T)\to \R^n$ is the canonical parametrization of $\gamma(I)$ with respect to $C(e,\sigma)$ if $\tilde{\gamma}(\tilde{I})=\gamma(I)$ and:
$$\tilde{\gamma}(s)=s e+\eta(s),$$
where $\eta:\tilde{I}\to\R^n$ is a curve such that $\eta(0)=\gamma(a)$ and $\eta(s)-\eta(t)\in e^\perp$ for any $s,t\in (0,T)$.
\end{definizione}

\begin{osservazione}
In the following we will often omit the cone with respect to which the canonical representation is constructed, as it will be clear from the context. Note moreover that the canonical parametrization is unique.
\end{osservazione}

The following proposition shows that the canonical parametrisation always exists:

\begin{proposizione}\label{canone}
For any $C(e,\sigma)$-curve $\gamma:I\to\R^n$, the canonical parametrization $\tilde{\gamma}$ of $\gamma$ with respect to the cone $C(e,\sigma)$ exists and:
\begin{itemize}
\item[(i)]$\lvert\eta(t)-\eta(s)\rvert\leq \beta(\sigma)\lvert t-s\rvert$.
\item[(ii)]$\lvert\tilde{\gamma}(t)-\tilde{\gamma}(s)\rvert\leq\left(1+\beta(\sigma)\right)\lvert t-s\rvert$
\end{itemize}
where $\beta(\sigma):=\sqrt{2\sigma-\sigma^2}/(1-\sigma)$.
\end{proposizione}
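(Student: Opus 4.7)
The plan is to build the canonical parametrization by projecting onto the axis. Set $\phi : I \to \R$ by $\phi(t) := \langle e, \gamma(t) - \gamma(a)\rangle$, interpreting $\gamma(a)$ as $\lim_{t \downarrow a}\gamma(t)$, which exists since $\gamma$ is Lipschitz. The cone condition gives, for $t < s$,
\[
\phi(s) - \phi(t) = \langle e, \gamma(s) - \gamma(t)\rangle > (1-\sigma)\lvert \gamma(s) - \gamma(t)\rvert > 0,
\]
so $\phi$ is a continuous, strictly increasing bijection from $I$ onto $(0,T)$. Define $\tilde{\gamma}(s) := \gamma(\phi^{-1}(s))$ and $\eta(s) := \tilde{\gamma}(s) - se$. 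A direct computation shows $\langle e, \tilde\gamma(s)\rangle = s + \langle e, \gamma(a)\rangle$, so $\langle e, \eta(s)\rangle$ is independent of $s$, i.e.\ $\eta(s) - \eta(t) \in e^\perp$, and $\eta(0) = \gamma(a)$. Uniqueness follows because any competing canonical parametrization must have the same $e$-component $s$, hence the same preimage point under $\phi$.

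For the quantitative bounds, I apply the cone inequality to $\tilde\gamma$ itself: for $0 < t < s < T$,
\[
s - t = \langle e, \tilde{\gamma}(s) - \tilde{\gamma}(t)\rangle > (1-\sigma)\lvert \tilde{\gamma}(s) - \tilde{\gamma}(t)\rvert,
\]
so $\lvert \tilde{\gamma}(s) - \tilde{\gamma}(t)\rvert \leq (s-t)/(1-\sigma)$. Since $\tilde{\gamma}(s) - \tilde{\gamma}(t) = (s-t)e + (\eta(s) - \eta(t))$ with $\eta(s) - \eta(t) \perp e$, Pythagoras gives
\[
\lvert \eta(s) - \eta(t)\rvert^2 = \lvert \tilde{\gamma}(s) - \tilde{\gamma}(t)\rvert^2 - (s-t)^2 \leq (s-t)^2 \left(\frac{1}{(1-\sigma)^2} - 1\right) = \beta(\sigma)^2 (s-t)^2,
\]
which is (i). Part (ii) is then immediate from the triangle inequality $\lvert \tilde{\gamma}(s) - \tilde{\gamma}(t)\rvert \leq \lvert s - t\rvert + \lvert \eta(s) - \eta(t)\rvert$.

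There is really no hard step here; the only minor subtlety is handling the endpoint $s = 0$, but this is dealt with once one notes that Lipschitz continuity of $\gamma$ forces $\gamma(a^+)$ to exist, and the bound in (ii) extends $\tilde\gamma$ continuously to $[0,T]$. The whole proof is essentially a change of variables legitimized by the strict monotonicity coming from the cone condition, combined with one application of the Pythagorean theorem.
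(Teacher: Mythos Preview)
The paper states this proposition without proof, so there is no argument to compare against. Your proof is correct and is exactly the natural one: project onto the axis $e$ to obtain a strictly increasing continuous function $\phi$, invert it to define the canonical parametrization, verify the orthogonality of $\eta(s)-\eta(t)$ from the construction, and then read off the Lipschitz constants via Pythagoras applied to the orthogonal decomposition $\tilde\gamma(s)-\tilde\gamma(t)=(s-t)e+(\eta(s)-\eta(t))$. One incidental remark: your intermediate estimate $\lvert\tilde\gamma(s)-\tilde\gamma(t)\rvert\leq (s-t)/(1-\sigma)$ is in fact slightly sharper than the stated bound (ii), since $1/(1-\sigma)\leq 1+\beta(\sigma)$ for $\sigma\in(0,1)$; deriving (ii) afterwards via the triangle inequality, as you do, is of course also valid.
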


The following proposition will become useful in the proof of Proposition \ref{princ}. It gives a uniform upper bound on the diameter of the parameter space of any canonical parametrization of any $C(e,\sigma)$-curve with values in the unit cube. We omit the proof, which is achieved by contradiction:

\begin{proposizione}\label{diam}
Let $\gamma:(0,T)\to[0,1]^n$ be the canonical parametrization of a Lipschitz curve going in the direction of $C(e,\sigma)$.
Then $T\leq \sqrt{n}$.
\end{proposizione}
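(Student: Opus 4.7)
The plan is to use the canonical form $\tilde{\gamma}(s) = s e + \eta(s)$ with $\eta(s)-\eta(t) \in e^\perp$, together with the fact that $[0,1]^n$ has Euclidean diameter $\sqrt{n}$. This gives a direct proof rather than an argument by contradiction.

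First I would fix $\varepsilon \in (0, T/2)$ and consider the displacement $\tilde{\gamma}(T-\varepsilon) - \tilde{\gamma}(\varepsilon)$. By the definition of the canonical parametrization, this equals $(T-2\varepsilon)e + (\eta(T-\varepsilon) - \eta(\varepsilon))$, where the second summand lies in $e^\perp$. Taking the scalar product with $e$ therefore yields
\begin{equation*}
\langle e, \tilde{\gamma}(T-\varepsilon) - \tilde{\gamma}(\varepsilon)\rangle = T - 2\varepsilon.
\end{equation*}
Next I would apply the Cauchy--Schwarz inequality together with $|e|=1$ and the fact that $\tilde{\gamma}(T-\varepsilon), \tilde{\gamma}(\varepsilon) \in [0,1]^n$ (so their difference has norm at most $\diam([0,1]^n) = \sqrt{n}$) to get
\begin{equation*}
T - 2\varepsilon = \langle e, \tilde{\gamma}(T-\varepsilon) - \tilde{\gamma}(\varepsilon)\rangle \leq \lvert \tilde{\gamma}(T-\varepsilon) - \tilde{\gamma}(\varepsilon)\rvert \leq \sqrt{n}.
\end{equation*}
Letting $\varepsilon \to 0^+$ concludes $T \leq \sqrt{n}$.

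There is really no obstacle here: the only subtlety is that $\tilde{\gamma}$ is defined on the open interval $(0,T)$, which is why I introduce the auxiliary $\varepsilon$ rather than evaluating at the endpoints directly; alternatively, one could appeal to Proposition \ref{canone}(ii) to continuously extend $\tilde{\gamma}$ to $[0,T]$ (with image still contained in the closed set $[0,1]^n$) and evaluate at $0$ and $T$ in one step. The key conceptual point is simply that the canonical parametrization is an arc-length parametrization \emph{in the $e$-direction}, so $T$ equals the $e$-component of the total displacement, which is bounded by the diameter of the ambient cube.
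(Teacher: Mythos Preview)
Your proof is correct. The paper omits its own argument, noting only that it proceeds by contradiction; your direct computation---observing that in the canonical parametrization $\langle e,\tilde\gamma(s)-\tilde\gamma(t)\rangle=s-t$ and bounding this by $\diam([0,1]^n)=\sqrt{n}$---is the same idea packaged without the contrapositive, and is if anything slightly cleaner.
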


The notion of directional width of sets was introduced by G. Alberti, M. Cs\"orney and D. Preiss in \cite{ACP0.2}. We give here a simplified version of the original definition:

\begin{definizione}[Directional width of a set]\label{spessore}
Let $E\subseteq [0,1]^n$ be a Borel set. We define the width $w_{C(e,\sigma)}[E]$ of the set $E$ along the proper cone $C(e,\sigma)$ as:
\begin{equation}
w_{C(e,\sigma)}[E]:=\inf_{\substack{E\subseteq G\\ G\text{ open}}} \sup_{\gamma\in\Gamma_{e,\sigma}}\int_{\substack{t\in I\\ \gamma(t)\in G}} \lvert\gamma^\prime(t)\rvert dt,
\nonumber
\end{equation}
where $\Gamma_{e,\sigma}$ is the set of the canonical parametrizations of those $C(e,\sigma)$-curves $\gamma:I\to[0,1]^n$ which are piece-wise affine and maximal, i.e., if $\tilde{\gamma}:J\to[0,1]^n$ is a $C(e,\sigma)$-curve extending $\gamma$, then $\tilde{\gamma}=\gamma$.

If $w_{C(e,\sigma)}[E]=0$, we say that $E$ is $C(e,\sigma)$-\emph{null}.
\end{definizione}

\begin{osservazione}
In Definition \ref{spessore} if the curves of $\Gamma_{e,\sigma}$ are not required maximal the definition of $\omega_{C(e,\sigma)}$ does not change. This assumption however will make the proof of Proposition \ref{pippol} easier.
\end{osservazione}

\begin{osservazione}
The width $\omega_C$ is monotone with respect to inclusion of sets, i.e., if $E\subseteq F$ then $\omega_{C(e,\sigma)}(E)\leq \omega_{C(e,\sigma)}(F)$.
Moreover, thanks to Proposition \ref{diam}, if $E\Subset (0,1)^n$ we have the bound:
\begin{equation}
    \begin{split}
    w_{C(e,\sigma)}[E]\leq\int_{\substack{t\in I\\ \gamma(t)\in (0,1)^n}} \lvert\gamma^\prime(t)\rvert dt\leq \sqrt{n}(1+\beta(\sigma)).
        \nonumber
    \end{split}
    \end{equation}
Furthermore if $G^\prime\subseteq(0,1)^n$ is an open set containing $E$, then:
\begin{equation}
w_{C(e,\sigma)}[E]=\inf_{\substack{E\subseteq G\subseteq G^\prime\\ G\text{ open}}}\sup_{\gamma\in\Gamma_{e,\sigma}}\int_{\substack{t\in I\\ \gamma(t)\in G}} \lvert\gamma^\prime(t)\rvert dt.
\nonumber
\end{equation}
\end{osservazione}

Proposition \ref{spessspess} will be of capital importance in proof of the implication (i)$\Rightarrow$(ii) of Theorem \ref{prinprin}. The reason for which is so useful is that it shows that $w_{C(e,\sigma)}$ enjoys some kind of $\sigma$-finiteness:

\begin{proposizione}
\label{spessspess}
Let $A,E\Subset (0,1)^n$ be Borel sets such that $w_{C(e,\sigma)}[E]<w_{C(e,\sigma)}[A]$. Then:
$$w_{C(e,\sigma)}[A\setminus E]>0.$$
\end{proposizione}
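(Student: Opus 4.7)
My plan is to argue by contrapositive: I will show that if $w_{C(e,\sigma)}[A\setminus E]=0$, then $w_{C(e,\sigma)}[A]\leq w_{C(e,\sigma)}[E]$, contradicting the strict inequality in the hypothesis. This essentially establishes a finite subadditivity property for the directional width on sets compactly contained in $(0,1)^n$.

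First I would fix $\epsilon>0$. Using the definition of width together with the assumption $w_{C(e,\sigma)}[A\setminus E]=0$, I can select an open set $G_1\supseteq A\setminus E$ with
\[
\sup_{\gamma\in\Gamma_{e,\sigma}}\int_{\{t:\gamma(t)\in G_1\}}\lvert\gamma'(t)\rvert\,dt<\epsilon,
\]
and an open set $G_2\supseteq E$ with
\[
\sup_{\gamma\in\Gamma_{e,\sigma}}\int_{\{t:\gamma(t)\in G_2\}}\lvert\gamma'(t)\rvert\,dt<w_{C(e,\sigma)}[E]+\epsilon.
\]
By the remark following Definition \ref{spessore}, and since $A,E\Subset (0,1)^n$, I may additionally arrange $G_1,G_2\subseteq (0,1)^n$, although this is not strictly needed for the argument.

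Next I would note that $G_1\cup G_2$ is open and contains $A=(A\setminus E)\cup (A\cap E)$, so it is admissible in the infimum defining $w_{C(e,\sigma)}[A]$. The pointwise inequality of indicator functions $\mathbf{1}_{G_1\cup G_2}\leq \mathbf{1}_{G_1}+\mathbf{1}_{G_2}$ implies, for every $\gamma\in\Gamma_{e,\sigma}$,
\[
\int_{\{t:\gamma(t)\in G_1\cup G_2\}}\lvert\gamma'(t)\rvert\,dt\leq \int_{\{t:\gamma(t)\in G_1\}}\lvert\gamma'(t)\rvert\,dt+\int_{\{t:\gamma(t)\in G_2\}}\lvert\gamma'(t)\rvert\,dt.
\]
Taking $\sup_{\gamma\in\Gamma_{e,\sigma}}$ and using the elementary bound $\sup(f+g)\leq \sup f+\sup g$ yields $w_{C(e,\sigma)}[A]\leq w_{C(e,\sigma)}[E]+2\epsilon$. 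Letting $\epsilon\to 0$ produces the contradiction.

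The main (and essentially only) delicate point is that one must first combine the two integrals into a single integrand and only then pass to $\sup_{\gamma}$: it is crucial to use that $\sup(f+g)\leq \sup f+\sup g$, because the infimum over $G$ of the supremum over $\gamma$ is in general not attained by a single curve. Beyond this bookkeeping I do not anticipate any serious obstacle; in particular no compactness argument or measure-theoretic refinement seems needed, because the entire statement is driven by the additive structure of the Lebesgue integrals appearing in the definition of $w_{C(e,\sigma)}$.
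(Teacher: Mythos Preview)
Your proof is correct and follows essentially the same approach as the paper: both arguments rest on the finite subadditivity of $w_{C(e,\sigma)}$ with respect to open covers, established via the pointwise inequality of indicators and the bound $\sup(f+g)\leq\sup f+\sup g$ (the paper uses the equivalent $\sup(f-g)\geq\sup f-\sup g$). The only cosmetic difference is that the paper proves the quantitative lower bound $w_{C(e,\sigma)}[A\setminus E]\geq w_{C(e,\sigma)}[A]-w_{C(e,\sigma)}[\Omega]$ directly, while you phrase it as a contrapositive; the content is the same.
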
 

\begin{proof}
Since $w_{C(e,\sigma)}[E]<w_{C(e,\sigma)}[A]$, there exists a $\delta>0$ for which the inequality persits:
\begin{equation}
w_{C(e,\sigma^\prime)}[E]+\delta<w_{C(e,\sigma)}[A].\nonumber
\end{equation}
Let $\epsilon<\delta/2$ and $\Omega\subseteq (0,1)^n$ be an open set such that $E\subseteq \Omega$ and $w_{C(e,\sigma)}[E]+\epsilon\geq w_{C(e,\sigma)}[\Omega]$. Then:
\begin{equation}
\begin{split}
w_{C(e,\sigma)}&[A\setminus E]\geq w_{C(e,\sigma)}[A\setminus \Omega]
=\inf_{\substack{A\setminus \Omega\subseteq G\\ G\text{ open}}}\sup_{\gamma\in\Gamma_{e,\sigma}}\int_{\substack{t\in I\\ \gamma(t)\in G}} \lvert\gamma^\prime(t)\rvert dt\\
\geq&\inf_{\substack{A\setminus \Omega\subseteq G\\ G\text{ open}}}\sup_{\gamma\in\Gamma_{e,\sigma}}\bigg(\int_{\substack{t\in I\\ \gamma(t)\in G\cup\Omega}} \lvert\gamma^\prime(t)\rvert dt-\int_{\substack{t\in I\\ \gamma(t)\in \Omega}} \lvert \gamma^\prime(t)\rvert dt\bigg)\\
\geq& \inf_{\substack{A\subseteq G\\ G\text{ open}}}\sup_{\gamma\in\Gamma_{e,\sigma}}\int_{\substack{t\in I\\ \gamma(t)\in G}} \lvert \gamma^\prime(t)\rvert dt-\sup_{\gamma\in\Gamma_{e,\sigma}}\int_{\substack{t\in I\\ \gamma(t)\in \Omega}} \lvert \gamma^\prime(t)\rvert dt
=w_{C(e,\sigma)}[A]-w_{C(e,\sigma)}[\Omega].\nonumber
\nonumber
\end{split}
\end{equation}
This implies that:
\begin{equation}
\begin{split}
w_{C(e,\sigma)}[((0,1)^n\setminus E)\cap A]\geq& w_{C(e,\sigma)}[A]-w_{C(e,\sigma)}[\Omega]
\geq w_{C(e,\sigma)}[A]-w_{C(e,\sigma)}[E]-\epsilon\geq\delta/2.\nonumber
\end{split}
\end{equation}
\end{proof}


The following proposition insures us that if a set is null with respect to a finite family of cones $\{C_i\}_{i=1,\ldots,N}$, then it is null with respect every other cone $C$ contained in $\bigcup_{i=1}^N C_i$. We omit the proof, which will appear in \cite{ACP0.2}.

\begin{proposizione}\label{salvato}
Suppose $E\subseteq [0,1]^n$ and that $w_{C(e,\sigma)}[E]>0$. Then for any $\sigma^\prime<\sigma$ there exists $e^\prime\in C(e,\sigma)$ such that $w_{C(e^\prime,\sigma^\prime)}[E]>0$.
\end{proposizione}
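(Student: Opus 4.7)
My plan is to prove the contrapositive. Fix $\sigma' \in (0,\sigma)$ and suppose that $w_{C(e',\sigma')}[E]=0$ for every $e' \in C(e,\sigma)\cap\mathbb{S}^{n-1}$; the goal becomes $w_{C(e,\sigma)}[E]=0$. The engine of the argument is a finite covering of the spherical cap $\overline{C(e,\sigma)\cap\mathbb{S}^{n-1}}$ by narrower cones whose widths already vanish, paired with a type decomposition of arbitrary piece-wise affine $C(e,\sigma)$-curves.

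For every $\bar e \in \overline{C(e,\sigma)\cap\mathbb{S}^{n-1}}$ one can find $e'\in C(e,\sigma)\cap\mathbb{S}^{n-1}$ with $\bar e \in C(e',\sigma')$ (take $e'$ inside the open cone, sufficiently close to $\bar e$), so by compactness finitely many directions $e_1',\ldots,e_N' \in C(e,\sigma)\cap\mathbb{S}^{n-1}$ suffice to cover: $\overline{C(e,\sigma)\cap\mathbb{S}^{n-1}} \subseteq \bigcup_{i=1}^{N} C(e_i',\sigma')$. Given $\epsilon>0$, the hypothesis yields open sets $G_i \supseteq E$ with $\sup_{\gamma\in\Gamma_{e_i',\sigma'}} \int_{\{t:\gamma(t)\in G_i\}}|\gamma'(t)|\,dt < \epsilon$, and I set $G := \bigcap_{i=1}^{N} G_i$. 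For an arbitrary $\gamma \in \Gamma_{e,\sigma}$, split $\gamma$ into affine pieces whose directions $d_j$ each lie in some $C(e_{i_j}',\sigma')$. Consecutive pieces of the same type $i$ merge (by convexity of $C(e_i',\sigma')$) into \emph{super-pieces} that are themselves piece-wise affine $C(e_i',\sigma')$-curves; extending each super-piece to maximality in $[0,1]^n$ produces a curve in $\Gamma_{e_i',\sigma'}$, whose $G$-length is therefore $< \epsilon$.

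The main obstacle I expect to face is the final summation step: nothing in the decomposition a priori bounds the number of super-pieces by $N$, since the types may alternate along $\gamma$ as often as $\gamma$ oscillates in direction, and the naive bound "(number of super-pieces) $\cdot \epsilon$" is not controlled by $N\epsilon$. My plan for closing this gap is to refine the covering to an intermediate amplitude $\sigma'' \in (0,\sigma')$, keeping the cover $\{C(e_i',\sigma'')\}$, and to use the slack $\sigma' - \sigma''$ to concatenate all type-$i$ super-pieces of $\gamma$ into a single piece-wise affine $C(e_i',\sigma')$-curve inside $[0,1]^n$: after reordering them by their projections onto $e_i'$ (which are monotone along each type-$i$ super-piece) and joining successive pairs by connecting segments in direction $e_i'$ itself—the total $e_i'$-extent is controlled by Proposition \ref{diam}, keeping the construction in the cube—the assembled curve belongs to $\Gamma_{e_i',\sigma'}$, so the whole type $i$ contributes at most $\epsilon$ to $\int_{\{t:\gamma(t)\in G\}}|\gamma'(t)|\,dt$. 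Summing over types yields the bound $N\epsilon$; since $\epsilon$ is arbitrary, $w_{C(e,\sigma)}[E]=0$, which completes the contrapositive and hence the proposition.
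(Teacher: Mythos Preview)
The paper does not prove this proposition; it explicitly omits the argument and defers it to \cite{ACP0.2}. So there is nothing in the paper to compare against, and I assess your attempt on its own.

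Your setup --- contrapositive, finite cover of the closed spherical cap by narrow cones $C(e_i',\sigma'')$, intersection $G=\bigcap_i G_i$, and type decomposition of an arbitrary $\gamma\in\Gamma_{e,\sigma}$ --- is sound, and you correctly identify the summation over super-pieces as the real obstacle. But the fix you propose does not close it. For the assembled curve $\tilde\gamma$ to control the $G$-length of the original type-$i$ super-pieces, $\tilde\gamma$ must pass through the \emph{same} points of $[0,1]^n$ that those pieces do (translating them would sever the link to $G$, since a translated piece meets $G$ in an unrelated set). Hence you must connect the actual endpoint $\gamma(b_k)$ of one type-$i$ super-piece to the actual start $\gamma(a_{k+1})$ of the next. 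The only information available on the displacement $\gamma(a_{k+1})-\gamma(b_k)$ is that it lies in $C(e,\sigma)$, and since $\sigma'<\sigma$ one has $C(e,\sigma)\not\subseteq C(e_i',\sigma')$: for unit $v\in C(e,\sigma)$ and $e_i'\in C(e,\sigma)\cap\mathbb{S}^{n-1}$ one only obtains $\langle v,e_i'\rangle>1-\sigma-\sqrt{2\sigma}$, strictly below the required $1-\sigma'$. A segment ``in direction $e_i'$'' from $\gamma(b_k)$ cannot reach $\gamma(a_{k+1})$ unless their components orthogonal to $e_i'$ coincide, which they will not in general; and for $\sigma>2-\sqrt{3}$ the map $t\mapsto\langle\gamma(t),e_i'\rangle$ is not even monotone along $\gamma$, so the $e_i'$-projection intervals of distinct type-$i$ super-pieces can overlap and no reordering yields a monotone concatenation. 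The slack $\sigma'-\sigma''$ buys control \emph{within} each super-piece, but it cannot absorb the \emph{between}-piece displacements, which live in the wider cone $C(e,\sigma)$. As written, the argument does not produce a single curve in $\Gamma_{e_i',\sigma'}$ carrying all the type-$i$ length inside $G$, and the bound $N\epsilon$ remains unjustified.
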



The definition of width of a set allows us to introduce the following notion of pure unrectifiability, which was first given in \cite{ACP0.2}:

\begin{definizione}[Uniform pure unrectifiability]
A Borel set $E\subseteq \R^n$ is said to be \emph{uniformily purely unrectifiable} if $w_{C(e,\sigma)}[E]=0$ for any $e\in\mathbb{S}^{n-1}$ and any $\sigma>0$.
\end{definizione}

In \cite{ACP0.2} it is shown that given a uniformly purely unrectifiable set $E$, one can construct a Lipschitz function being non-differentiable along any direction on $E$. One of the big challenges of that paper was to prove that a purely unrectifiable set is also uniformly purely unrectifiable. We give here a weaker version of this equivalence, which will suffice for our purposes:

\begin{proposizione}\label{equivequiv}
Let $E\subseteq \R^n$ be a compact set. The following are equivalent:
\begin{itemize}
\item[(i)] $E$ is uniformily purely unrectifiable,
\item[(ii)] $E$ is purely unrectifiable.
\end{itemize}
\end{proposizione}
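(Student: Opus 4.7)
The plan is to prove $(i)\Rightarrow(ii)$ by a direct approximation argument using Proposition \ref{equiv}, and to prove $(ii)\Rightarrow(i)$ by contrapositive, using compactness of $E$ and Arzel\`a--Ascoli to extract from an almost-extremal sequence for the width a limit Lipschitz curve that charges $E$ in the sense of $\mathcal{H}^1$.

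For $(i)\Rightarrow(ii)$ I would proceed as follows. By Proposition \ref{equiv} it is enough to show $\mathcal{H}^1(\gamma(I)\cap E)=0$ for any $C(e,\sigma)$-curve $\gamma:(0,T)\to [0,1]^n$ in canonical form $\gamma(s)=se+\eta(s)$. Interpolating $\eta$ piece-wise linearly at mesh of size $1/N$ yields $\gamma_N(s)=se+\eta_N(s)$; convexity and dilation-invariance of $C(e,\sigma)$ guarantee that each chord $\gamma(s_i)\to\gamma(s_{i+1})$ lies in the cone, so $\gamma_N\in\Gamma_{e,\sigma}$ (maximality is immaterial by the remark after Definition \ref{spessore}), and $\gamma_N\to\gamma$ uniformly. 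Since $w_{C(e,\sigma)}[E]=0$, for every $k$ I can choose an open set $G_k$ with $E\subseteq G_k\subseteq B_{1/k}(E)$ for which $\int_{\{\gamma_N(s)\in G_k\}}|\gamma_N'(s)|\,ds<1/k$ uniformly in $N$. Since $|\gamma_N'|\geq 1$ (canonical form), this gives $\mathcal{L}^1(\{s:\gamma_N(s)\in G_k\})<1/k$; as $G_k$ is open and $\gamma_N\to\gamma$ uniformly, $\chi_{\{\gamma(s)\in G_k\}}\leq \liminf_N\chi_{\{\gamma_N(s)\in G_k\}}$, so Fatou yields $\mathcal{L}^1(\{s:\gamma(s)\in G_k\})<1/k$. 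Since $E=\bigcap_k G_k$ and $|\gamma'|\leq 1+\beta(\sigma)$, the area formula applied to the injective curve $\gamma$ gives $\mathcal{H}^1(\gamma\cap E)\leq (1+\beta(\sigma))\mathcal{L}^1(\{s:\gamma(s)\in E\})=0$.

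For $(ii)\Rightarrow(i)$ I would argue by contrapositive. Assume $w_{C(e,\sigma)}[E]=\delta>0$ for some $e,\sigma$. For every $k\in\N$ pick a canonical piece-wise affine $\gamma_k\in\Gamma_{e,\sigma}$, defined on $(0,T_k)$, with
\[
\int_{\{\gamma_k(s)\in B_{1/k}(E)\}}|\gamma_k'(s)|\,ds\geq \delta/2.
\]
Propositions \ref{canone} and \ref{diam} give uniform Lipschitz constant $1+\beta(\sigma)$ and $T_k\leq\sqrt{n}$; extending $\gamma_k$ by the constant $\gamma_k(T_k)$ on $[T_k,\sqrt{n}]$, Arzel\`a--Ascoli produces (along a subsequence) a uniform limit $\gamma:[0,\sqrt{n}]\to[0,1]^n$ with $T_k\to T$. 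The identity $\gamma_k(s)\cdot e=s$ for $s\in[0,T_k]$ passes to the limit, so $\gamma|_{[0,T]}$ is injective with $|\gamma'|\geq 1$ a.e. Setting $A_k:=\{s\in[0,T_k]:\gamma_k(s)\in B_{1/k}(E)\}$, one has $\mathcal{L}^1(A_k)\geq\delta/(2(1+\beta(\sigma)))$, and for $s\in\limsup_k A_k$ the uniform convergence gives $\gamma(s)\in\overline{B_{1/K}(E)}$ for every $K$, hence $\gamma(s)\in E$ and $s\leq T$. Reverse Fatou (applicable since $A_k\subseteq[0,\sqrt{n}]$) yields $\mathcal{L}^1(\{s\in[0,T]:\gamma(s)\in E\})\geq\delta/(2(1+\beta(\sigma)))$, and the area formula on the injective Lipschitz $\gamma|_{[0,T]}$ gives $\mathcal{H}^1(\gamma\cap E)>0$, contradicting Proposition \ref{equiv}.

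The main delicate point will be the analysis of the limit curve: one must ensure $T>0$ (which follows from $\mathcal{L}^1(A_k)\geq\delta/(2(1+\beta(\sigma)))>0$), verify that the canonical identity survives the uniform limit so that $\gamma|_{[0,T]}$ is injective and has $|\gamma'|\geq 1$, and carry out the $\limsup$ computation identifying $\{s:\gamma(s)\in E\}$. The observation after Definition \ref{spessore} — that maximality of curves in $\Gamma_{e,\sigma}$ can be dropped without changing the width — is what allows the piece-wise affine interpolants in the easy direction and the almost-extremal curves in the hard direction to be admissible without further manipulation.
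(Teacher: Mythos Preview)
Your argument for $(i)\Rightarrow(ii)$ is essentially identical to the paper's: piece-wise linear interpolation of the canonical parametrisation, the pointwise inequality $\chi_{\{\gamma\in G\}}\leq\liminf_N\chi_{\{\gamma_N\in G\}}$ for open $G$, Fatou, and the bound $\lvert\gamma'\rvert\leq 1+\beta(\sigma)$ to pass from $\mathcal{L}^1$ to $\mathcal{H}^1$.

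For $(ii)\Rightarrow(i)$ the routes differ. The paper simply invokes Proposition~7.4 of \cite{AlbertiMarchese} (restated in the paper as Proposition~\ref{alb}): compactness plus pure unrectifiability of $E$ gives, for every $\epsilon>0$, a $\delta>0$ with $\mathcal{H}^1(B_\delta(E)\cap\gamma(I))\leq\epsilon$ uniformly over all $C(e,\sigma)$-curves $\gamma$, and this immediately yields $w_{C(e,\sigma)}[E]=0$. You instead prove the contrapositive directly: from $w_{C(e,\sigma)}[E]>0$ you extract almost-extremal piece-wise affine curves for the neighbourhoods $B_{1/k}(E)$, pass to a uniform limit via Arzel\`a--Ascoli (using Propositions~\ref{canone} and~\ref{diam} for equicontinuity and bounded domain), and show via reverse Fatou that the limit curve meets $E$ in positive $\mathcal{H}^1$-measure. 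Your argument is correct and self-contained; the identity $\langle\gamma_k(s),e\rangle=s+\mathrm{const}$ does survive the limit, giving injectivity and $\lvert\gamma'\rvert\geq 1$ on $[0,T]$, and the $\limsup$ computation is sound because $E$ is closed. The paper's route is shorter because Proposition~\ref{alb} is needed anyway for the width-function machinery in Section~\ref{albalb}; your route has the advantage of isolating exactly the compactness content needed here without the full strength of that proposition, and in fact your Arzel\`a--Ascoli extraction is essentially how one would prove the relevant part of Proposition~7.4 in \cite{AlbertiMarchese}.
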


\begin{proof}
$(i)\Rightarrow (ii)$ Thanks to Proposition \ref{equiv}, it is sufficient to prove that for any $e\in\mathbb{S}^{n-1}$, any $\sigma>0$ and any $C(e,\sigma)$-curve $\gamma:I\to \R^n$, we have that $\mathcal{H}^1(E\cap \gamma(I))=0$. Let $\tilde{\gamma}:(0,T)\to\R^n$ be the canonical parametrization of $\gamma(I)$ with respect to the cone $C(e,\sigma)$. Note that:
\begin{equation}
\begin{split}
    \mathcal{H}^1(\gamma(I)\cap G_\epsilon)=&\mathcal{H}^1(\tilde{\gamma}(\tilde{I})\cap G_\epsilon)=\int_{\substack{t\in\tilde{I}\\ \tilde{\gamma}{(t)}\in G_{\epsilon}}} \lvert \tilde{\gamma}^\prime(t)\rvert dt
    \leq (1+\beta(\sigma))\mathcal{L}^1(\{t\in \tilde{I}:\tilde{\gamma}\in G_\epsilon\}).
    \nonumber
\end{split}
\end{equation}
Consider now a sequence $\gamma_k:\tilde{I}\to \R^n$, which is given by linear interpolation of $\tilde{\gamma}$ and for which $\lVert \tilde{\gamma}-\gamma_k\rVert_{\infty,\tilde{I}}\leq 1/k$. By the uniform pure unrectifiability of $E$, for any $\epsilon>0$ there exists an open set $G_{\epsilon}\supseteq E$ such that:
\begin{equation}
\mathcal{L}^1(\{t\in\tilde{I}: \gamma_k(t)\in G_\epsilon\})\leq\int_{\substack{t\in\tilde{I}\\ \gamma_k(t)\in G_{\epsilon}}} \lvert \gamma_k^\prime(t)\rvert dt\leq \epsilon\nonumber,
\end{equation}
for any $k\in\N$.  Let $s\in \tilde{I}$ and suppose that $B_\delta(\tilde{\gamma}(s))\subseteq G_\epsilon$. Then for any $k>1/\delta$, $\gamma_k(s)\in G_\epsilon$. This implies that:
$$\chi_{\{t\in\tilde{I}: \tilde{\gamma}(t)\in G_\epsilon\}}(s)\leq \liminf_{k\to \infty} \chi_{\{t\in\tilde{I}: \gamma_k(t)\in G_\epsilon\}}(s).$$
Therefore by Fatou's lemma, we deduce that:
\begin{equation}
    \begin{split}
       \mathcal{L}^1(\{t\in\tilde{I}: \tilde{\gamma}(t)\in G_\epsilon\})\leq&\int\liminf_{k\to \infty} \chi_{\{t\in\tilde{I}: \gamma_k(t)\in G_\epsilon\}}(s)ds
       \leq\liminf_{k\to \infty}\mathcal{L}^1(\{t\in\tilde{I}: \gamma_k(t)\in G_\epsilon\}).
        \nonumber
    \end{split}
\end{equation}
Summing up, we have:
\begin{equation}
    \begin{split}
        \mathcal{H}^1(\gamma(I)\cap G_\epsilon)\leq& (1+\beta(\sigma))\liminf_{k\to \infty}\mathcal{L}^1(\{t\in\tilde{I}: \gamma_k(t)\in G_\epsilon\})
        \leq(1+\beta(\sigma))\epsilon.
        \nonumber
    \end{split}
\end{equation}
By arbitrariness of $\epsilon$ we conclude.

$(ii)\Rightarrow (i)$ In Proposition 7.4 of \cite{AlbertiMarchese} it has been proved that provided $E\subseteq\R^n$ is a compact set and for every $C(e,\sigma)$-curve $\gamma:I\to\R^n$ one has $\mathcal{H}^1(E\cap \gamma(I))=0$, then for any $\epsilon>0$ there exists a $\delta>0$ for which $\mathcal{H}^1(B_\delta(E)\cap \gamma(I))\leq\epsilon$. Since $E$ is compact and purely unrectifiable this property holds for every cone $C(e,\sigma)$. Thus for any $C(e,\sigma)$-curve this yields:
\begin{equation}
\int_{\substack{t\in I\\ \gamma(t)\in B_\delta(E)}} \lvert\gamma^\prime(t)\rvert dt=\mathcal{H}^1(\gamma(I)\cap B_\delta(E))\leq\epsilon.\nonumber
\end{equation}
In particular, we deduce that $w_{C(e,\sigma)}[E]=0$,
for any $e\in\mathbb{S}^{n-1}$ and $\sigma\in(0,1)$.
\end{proof}	

One of the key steps needed for the proof of the implication (ii)$\Rightarrow$(i) of Theorem \ref{prinprin}, is the understanding of the structure of Borel sets which cannot be covered by countably many purely unrectifiable sets. The following result characterise such sets:

\begin{teorema}\label{tSoleki}
Let $E\subseteq \R^n$ be an analytic set. Then:
\begin{itemize}
    \item[(i)]either $E$ is covered by a countable union of closed uniformily purley unrectifiable sets,
    \item[(ii)]or there exists a closed set $F$ such that:
    \begin{itemize}
        \item[($\alpha$)]$E\cap F$ contains a $G_\delta$ set dense in $F$,
        \item[($\beta$)] for every open set $U$ such that $U\cap F\neq \emptyset$ we have that $\cl(U\cap F)$ has positive width with respect to some cone.
    \end{itemize}
\end{itemize}
\end{teorema}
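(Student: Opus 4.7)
The plan is to derive this dichotomy from a Solecki-type theorem for $\sigma$-ideals of closed sets. Consider the hereditary family $\mathcal{I}$ of closed uniformly purely unrectifiable subsets of $\R^n$ (which, on bounded domains, coincides with the compact purely unrectifiable subsets by Proposition \ref{equivequiv}). By the monotonicity of the width under inclusion (noted after Definition \ref{spessore}), $\mathcal{I}$ is downward closed.

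The first step is to check that $\mathcal{I}$ has the right descriptive-set-theoretic complexity, namely that it forms a $G_\delta$ subfamily of the hyperspace of closed sets (endowed with the Hausdorff/Effros-Borel structure). For each fixed proper cone $C(e,\sigma)$ the map $K\mapsto w_{C(e,\sigma)}[K]$ is upper semi-continuous: an open neighbourhood $G\supseteq K_0$ witnessing $w_{C(e,\sigma)}[K_0]<\epsilon$ continues to contain every compact $K$ sufficiently close to $K_0$ in Hausdorff distance, giving $w_{C(e,\sigma)}[K]<\epsilon$ as well. Hence $\{K:w_{C(e,\sigma)}[K]=0\}$ is $G_\delta$, and by Proposition \ref{salvato} uniform pure unrectifiability can be detected against a countable dense family of cones, so $\mathcal{I}$ itself is $G_\delta$.

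With this complexity in hand, a Solecki-type dichotomy for analytic sets and $\sigma$-ideals of closed sets generated by a $G_\delta$ family applies to $E$. It yields two alternatives: either $E$ is contained in a countable union of elements of $\mathcal{I}$, which is exactly case (i); or there exists a non-empty closed set $F\subseteq\R^n$ such that $E\cap F$ is comeager in $F$ and no relatively open non-empty portion of $F$ lies in $\mathcal{I}_\sigma$. Property ($\alpha$) follows immediately: the analytic set $E\cap F$, being comeager and enjoying the Baire property in the Polish subspace $F$, contains a dense $G_\delta$ of $F$. Property ($\beta$) also follows by contradiction: were $\cl(U\cap F)$ uniformly purely unrectifiable for some open $U$ meeting $F$, then $\cl(U\cap F)$ would be a single element of $\mathcal{I}$ containing the relatively open set $U\cap F$, forcing $U\cap F\in\mathcal{I}_\sigma$, contrary to the second alternative.

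The main technical point is the verification in the first step that $\mathcal{I}$ is $G_\delta$; everything else is a direct application of the Solecki dichotomy combined with the classical Baire property of analytic sets.
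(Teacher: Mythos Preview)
Your approach is essentially the same as the paper's: both reduce the dichotomy to Solecki's theorem on covering analytic sets by closed sets from a $\sigma$-ideal, taking $I$ to be the family of compact purely unrectifiable subsets of $[0,1]^n$ (which coincides with your $\mathcal{I}$ by Proposition~\ref{equivequiv}). The paper's proof is a one-line citation of Theorem~2 of \cite{Coveringofclosedsets} without verifying the hypotheses, whereas you supply the additional work of checking the $G_\delta$ complexity of $\mathcal{I}$ via upper semi-continuity of the width and a countable-cones reduction; this extra detail is welcome, though your invocation of Proposition~\ref{salvato} for the latter step is slightly elliptical and would benefit from spelling out exactly how a countable dense family of cones suffices.
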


\begin{proof}
Just apply Theorem 2 of \cite{Coveringofclosedsets}, where in this case the $\sigma$-ideal $I_{ext}$ is the family of $F^\sigma$ purely unrectifiable subsets of $[0,1]^n$ and $I$ is the family of compact purely unrectifiable subsets of $[0,1]^n$.
\end{proof}

\begin{osservazione}
Note that in the above proposition, point (ii)($\beta$) can be strengthened to:
    \begin{itemize}
    \leftskip=1cm
        \item [($\beta^\prime$)]\emph{for every open set $U$ such that $U\cap F\neq \emptyset$ we have that $U\cap F$ has positive width with respect to some cone.}
\end{itemize}
This can be done just by observing that since $U\cap F\neq \emptyset$, then there must exist a small open ball $B$ compactly contained in $U$ such that $\cl(B\cap F)\subseteq U\cap F$. This and the fact that $\cl(B\cap F)$ must have positive width imply the claim. 
\end{osservazione}

\begin{definizione}\label{defiwi}
Assume $F$ is a Borel set, $\sigma>0$ and $u\in\mathbb{S}^{n-1}$. We define:
\begin{equation}
\mathcal{A}_F(u,\sigma):=\{x\in F:\omega_{C(u,\sigma)}(B_r(x)\cap F)>0\text{ for any }r>0 \}.
\label{Asets}
\end{equation}
The set $F$ is said to \emph{have every portion of positive }$C(u,\sigma)$-\emph{width} if $F\subseteq\mathcal{A}_F(u,\sigma)$.
\end{definizione}

The following proposition is an application of Proposition \ref{salvato}, and it will be used in the proof of Proposition \ref{BMstrategia}.

\begin{proposizione}\label{splitter}
Suppose $F$ is a closed set with every portion of positive $C(u,\sigma)$-width and let $\{u_i\}_{i=1,\ldots, N}$ be a $\sigma/16$-dense set in $\mathbb{S}^{n-1}$, i.e., for any $x\in\mathbb{S}^{n-1}$ there is $i\in\{1,\ldots,N\}$ such that $\lvert x-u_i\rvert<\sigma/16$. Then: 
$$F=\bigcup_{i=1}^N \mathcal{A}_F(u_i,\sigma/8).$$
For every $i=\{1,\ldots, N\}$ the set $\mathcal{A}_F(u_i,\sigma/8)$ is closed and if it is non-empty, it has every portion of positive $\omega_{C(u_i,\sigma/8)}$-width.
\end{proposizione}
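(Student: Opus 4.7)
My plan is to establish in turn the three assertions: the covering equality, closedness of each $\mathcal{A}_F(u_i,\sigma/8)$, and the portion property.

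For the nontrivial inclusion $F \subseteq \bigcup_i \mathcal{A}_F(u_i,\sigma/8)$, fix $x \in F$. For every $r > 0$ the hypothesis $F \subseteq \mathcal{A}_F(u,\sigma)$ gives $w_{C(u,\sigma)}[B_r(x) \cap F] > 0$, so Proposition \ref{salvato} applied with $\sigma' = \sigma/16$ produces a unit vector $e_r \in C(u,\sigma)$ with $w_{C(e_r,\sigma/16)}[B_r(x) \cap F] > 0$. Pick $u_{i(r)}$ from the $\sigma/16$-dense family with $|e_r - u_{i(r)}| < \sigma/16$; an elementary scalar-product estimate shows $C(e_r,\sigma/16) \subseteq C(u_{i(r)},\sigma/8)$, so every $C(e_r,\sigma/16)$-curve is automatically a $C(u_{i(r)},\sigma/8)$-curve, and comparing image-level arc-length integrals yields $w_{C(u_{i(r)},\sigma/8)}[B_r(x) \cap F] \geq w_{C(e_r,\sigma/16)}[B_r(x) \cap F] > 0$. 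With only $N$ possible values of $i(r)$, pigeonhole supplies an index $i$ and a sequence $r_k \downarrow 0$ with $i(r_k) = i$; set-monotonicity of the width (using $B_{r_k}(x)\cap F \subseteq B_r(x)\cap F$ for $r_k < r$) then propagates the positivity to every $r > 0$, so $x \in \mathcal{A}_F(u_i,\sigma/8)$.

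Closedness of $\mathcal{A}_F(u_i,\sigma/8)$ is immediate: if $x_k \to x$ with $x_k$ in the set, then $x \in F$ by closedness of $F$, and for any $r > 0$ the inclusion $B_{r/2}(x_k) \subseteq B_r(x)$ holds for $k$ large, so set-monotonicity preserves positive width.

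The portion property is the main obstacle. The strategy is first to show that $F \setminus \mathcal{A}_F(u_i,\sigma/8)$ itself has $C(u_i,\sigma/8)$-width zero. Every $y$ in this (relatively open in $F$) set admits $s(y) > 0$ with $w_{C(u_i,\sigma/8)}[B_{s(y)}(y) \cap F] = 0$; by Lindel\"of one extracts a countable subcover $\{B_{s_k}(y_k)\}_k$, and then a countable-subadditivity argument (given $\epsilon > 0$, choose open $G_k \supseteq B_{s_k}(y_k) \cap F$ whose width data lie within $\epsilon/2^k$, take $G = \bigcup_k G_k$, and use the pointwise indicator inequality $\mathbf{1}_G \leq \sum_k \mathbf{1}_{G_k}$ to bound the curve integral over $G$ by a sum of integrals over the $G_k$) yields $w_{C(u_i,\sigma/8)}[F \setminus \mathcal{A}_F(u_i,\sigma/8)] = 0$. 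Once this vanishing is in hand, for $x \in \mathcal{A}_F(u_i,\sigma/8)$ and $r > 0$ set $A = B_r(x) \cap F$ and $E = B_r(x) \cap (F \setminus \mathcal{A}_F(u_i,\sigma/8))$: then $w[E] = 0 < w[A]$, and Proposition \ref{spessspess} yields $w[A \setminus E] = w[B_r(x) \cap \mathcal{A}_F(u_i,\sigma/8)] > 0$, which is exactly the portion property. The delicate point throughout is extracting countable subadditivity cleanly from the inf-sup definition of width, since the admissible curves and the open neighbourhoods must be juggled simultaneously.
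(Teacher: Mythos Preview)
Your argument is correct and follows the same three-step outline as the paper (covering, closedness, portion property), but with two genuine differences worth noting.

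For the covering inclusion you are actually more careful than the paper. The paper applies Proposition~\ref{salvato} and simply asserts that one of the $u_i$ works, without addressing that the direction produced by Proposition~\ref{salvato} depends a priori on the radius $r$; your pigeonhole step (finitely many indices, a sequence $r_k\downarrow 0$) together with monotonicity of the width closes that small gap explicitly. The cone-inclusion $C(e_r,\sigma/16)\subseteq C(u_{i(r)},\sigma/8)$ and the resulting width comparison are also handled correctly: since the curve integral computes $\mathcal{H}^1(\gamma(I)\cap G)$ and is therefore parametrisation-independent, the larger cone admits more curves and hence gives a larger width.

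For the portion property the two arguments diverge more substantially. The paper argues by contradiction: assuming an open $U$ meets $\mathcal{A}_F(u_i,\sigma/8)$ in a set of zero width, it covers the \emph{compact} set $\text{cl}(U)\cap F$ by a small-width open neighbourhood of $U\cap\mathcal{A}_F(u_i,\sigma/8)$ together with \emph{finitely} many zero-width balls around points of $F\setminus\mathcal{A}_F(u_i,\sigma/8)$, and finite subadditivity then forces $w_{C(u_i,\sigma/8)}(U\cap F)=0$, contradicting membership of any point of $U$ in $\mathcal{A}_F(u_i,\sigma/8)$. Your route is direct: you first show the entire set $F\setminus\mathcal{A}_F(u_i,\sigma/8)$ has zero width via Lindel\"of and a clean $\epsilon/2^k$ countable-subadditivity argument, and then feed this into Proposition~\ref{spessspess}. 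The paper's compactness shortcut avoids having to verify countable subadditivity of the width; your version does not rely on boundedness of $F$ and makes tidy use of Proposition~\ref{spessspess}, which is already available. Both are perfectly valid.
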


\begin{proof}
Since $F$ has any portion of positive $\omega_{C(u,\sigma)}$-width, for any $x\in F$ and any $r>0$ we have $\omega_{C(u,\sigma)}(B_r(x)\cap F)>0$, and thus Proposition \ref{salvato} implies that there exists $u_i\in C(u,\sigma)$ such that $\omega_{C(u_i,\sigma/8)}(B_r(x)\cap F)>0$. This implies that $F$ is covered by the sets $\{\mathcal{A}_F(u_i,\sigma/8)\}_{i=1,\ldots, N}$.

Let $\{x_j\}_{j\in\N}$ be a sequence contained in $F$, converging to some $x\in \R^n$. Since $F$ is closed, $x\in F$ and for any $\rho>0$ we have $B_{\rho/3}(x_i)\subseteq B_\rho(x)$ for a sufficiently big $i$. This implies that:
$$0<\omega_{C(u_i,\sigma/8)}(F\cap B_{\rho/3}(x_i))\leq \omega_{C(u_i,\sigma/8)}(F\cap B_{\rho}(x)),$$
and therefore $x\in\mathcal{A}_F(u_i,\sigma/8)$.

We are left to prove that if $\mathcal{A}_F(u_i,\sigma/8)$ is non-empty, then $\mathcal{A}_F(u_i,\sigma/8)$ has every portion of positive $\omega_{C(e,\sigma)}$-width. Assume by contradiction there exists an open set $U$ such that $U\cap \mathcal{A}_F(u_i,\sigma/8)\neq \emptyset$ and:
\begin{equation}
    \omega_{C(u,\sigma)}(U\cap \mathcal{A}_F(u_i, \sigma/8))=0.
\label{equl}
\end{equation}
For any $\epsilon>0$ there exists an open neighbourhood $V_\epsilon$ of $\mathcal{A}_F(u_i,\sigma/8)$ such that $\omega_{C(e,\sigma)}(U\cap V_\epsilon)<\epsilon$.
Moreover for any $y\in F\setminus \mathcal{A}_F(u_i,\sigma/8)$ there exists $\rho(y)>0$ such that $\omega_{C(u_i,\sigma/8)} (B_{\rho(y)}(y))=0$. Since $\text{cl}(U)\cap F$ is compact we can find many $y_1,\ldots,y_N\in F\setminus \mathcal{A}_F(u_i,\sigma/8)$ such that:
$$\text{cl}(U)\cap F\subseteq V_\epsilon \cup \bigcup_{i=1}^N B_{\rho(y_i)}(y_i),ì.$$
Thanks to the subadditivity of the width, we deduce that:
$$\omega_{C(u,\sigma)}(U\cap F)\leq \omega_{C(u,\sigma)}(U\cap V_\epsilon)+\sum_{i=1}^N \omega_{C(u,\sigma)}(B_{\rho(y_i)}(y_i))\leq \epsilon.$$
By arbitrariness of $\epsilon$ we contradict the fact that $x\in\mathcal{A}_F(u_i,\sigma/8)$.
\end{proof}

\subsection{Piece-wise congruent mappings}\label{deffi}
\label{SS1.3}
In this subsection we always suppose that $m\geq n$.
Since our main result Theorem \ref{prinprin} requires the definition of a topology on Lipschitz functions, we introduce here the ambient space:
\begin{definizione}
We let $\lip([0,1]^n,\R^m)$ be the complete metric space of $1$-Lipschitz functions from $[0,1]^n$ to $\R^m$ endowed with the topology induced by uniform convergence.
\end{definizione}

We introduce now a subset of $\lip([0,1]^m,\R^m)$ with nice rigidity properties. In order to define these maps we need to introduce some notation. We define $\tau$ to be the collection of all finite families of open symplexes $\Pi\subseteq\mathcal{P}([0,1]^n)$ which are pairwise disjoint and:
$$\bigcup\{\cl(P):P\in\Pi\}=[0,1]^n.$$

\begin{definizione}[piece-wise congruent mappings]\label{pwcongr}
A mapping $f:[0,1]^n\to \R^m$ is said to be piece-wise congruent if
it is continuous and there exists a $\Pi\in\tau$ such that the restriction $f\rvert_P$ for
any $P\in\Pi$ is an affine isometric mapping, i.e. there are $A_P\in O(n,m)$ and $b_P\in\R^m$ such that:
 $$f(x)=A_Px+b_P,$$
for any $x\in P$. The set of all piece-wise congruent mappings will be denoted by $\mathfrak{P}(n,m)$, and a partition $\Pi\in\tau$ for which $f\lvert_P$ is affine for any $P\in\Pi$ is said \emph{adapted to} $f$.
\end{definizione}

In order to prove that the set $\mathfrak{P}(n,m)$ is dense in $\lip([0,1]^n,\R^m)$, we recall that U. Brehm in \cite{PLisometries} proved the following:

\begin{teorema}
Let $M\subseteq \R^n$ be a finite set and let $f:\R^n\supseteq M\to\R^m$ be a $1$-Lipschitz function on $M$, i.e.:
$$\lvert f(x)-f(y)\rvert\leq\lvert x-y\rvert,$$ 
for any $x,y\in M$. Then $f$ has a piece-wise congruent extension $\overline{f}:[0,1]^n\to \R^m$ . \label{PLiso}
\end{teorema}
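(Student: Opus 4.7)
The plan is to combine Kirszbraun's extension theorem with a triangulation-and-refinement argument. Since $\R^m$ is Hilbert, Kirszbraun's theorem extends $f$ to a continuous $1$-Lipschitz map $\tilde f:[0,1]^n\to\R^m$ with $\tilde f|_M=f$. Next, fix a simplicial triangulation $\mathcal{T}$ of $[0,1]^n$ whose vertex set contains $M$ (refining as needed), and let $g$ be the piecewise affine interpolation of $\tilde f$ at the vertices of $\mathcal{T}$. A convexity argument on each simplex shows that $g$ is still $1$-Lipschitz, and $g|_M=f$ by construction. This reduces the problem to replacing an affine $1$-Lipschitz map on each simplex by a piecewise congruent map with the same boundary data.

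The heart of the proof is thus the following local lemma: given an $n$-simplex $P\subset\R^n$ and an affine map $A:P\to\R^m$ with $\|DA\|\le 1$ and $m\ge n$, there exist a simplicial subdivision of $P$ and a continuous piecewise congruent map $A':P\to\R^m$ agreeing with $A$ on $\partial P$. Applying the lemma consistently on every simplex of $\mathcal{T}$ (so that the boundary data on shared faces agree) yields the desired piecewise congruent extension $\overline f$.

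I would prove the local lemma by induction on $n$. For $n=1$, the affine contraction of an interval into $\R^m$ is replaced by a zigzag polyline of the correct length, using a direction in $\R^m$ orthogonal to the image segment to absorb the length defect (possible since $m\ge 1$). For $n\ge 2$, first apply the induction hypothesis on each $(n{-}1)$-face of $P$ to make the restriction to $\partial P$ piecewise congruent; then extend inward by subdividing the interior and defining each cell via a coning-plus-zigzag construction whose isometric realizability relies on the hypothesis $m\ge n$.

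The main obstacle is precisely this inductive interior extension: after modifying the boundary data, one must build a subdivision of the interior on which each cell maps isometrically \emph{and} whose cell-to-cell continuity is compatible with the already fixed boundary data on $\partial P$. This combinatorial-geometric matching is the technical core of Brehm's original argument, and the dimensional assumption $m\ge n$ enters exactly here, furnishing enough perpendicular directions in the target to realise the required piecewise isometric pieces without violating boundary compatibility.
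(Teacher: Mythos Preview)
The paper does not give its own proof of this statement: it is quoted as a result of Brehm \cite{PLisometries} and used as a black box (only to derive Corollary~\ref{dense}). So there is no ``paper's proof'' to compare against; I can only assess your proposal on its own merits.

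Your outline has a genuine gap at the very first reduction step. You claim that the piecewise affine interpolant $g$ of the Kirszbraun extension $\tilde f$ over a triangulation is again $1$-Lipschitz, by ``a convexity argument on each simplex''. This is false: affine interpolation of a $1$-Lipschitz map at the vertices of a simplex need not be $1$-Lipschitz. For instance, with $n=2$ take the simplex with vertices $(0,0),(1,0),(0,1)$ and the $1$-Lipschitz map $\tilde f(x,y)=(\max(x,y),0,\ldots,0)\in\R^m$; the affine interpolant is $(x+y,0,\ldots,0)$, whose Lipschitz constant is $\sqrt{2}$. Refining the triangulation does not help, since the interpolant's Lipschitz constant is governed by the shape of the simplices, not their diameter. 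Hence your reduction to the ``local lemma'' (affine $1$-Lipschitz on a simplex $\Rightarrow$ piecewise congruent with the same boundary values) is not available; producing a $1$-Lipschitz piecewise affine extension of $f\rvert_M$ is essentially as hard as Brehm's theorem itself.

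There is also an inconsistency in the local lemma as you state it: you ask for $A'$ to agree with the \emph{affine} map $A$ on all of $\partial P$, but a piecewise congruent map restricts to a piecewise isometry on each face and therefore cannot coincide with a strictly contracting affine map there. Your proof sketch implicitly acknowledges this (you first modify the boundary by induction), so the lemma should really be stated with boundary data that is already piecewise congruent --- and then one must ensure consistency on the $(n{-}2)$-skeleton when the $(n{-}1)$-faces are treated separately. These compatibility issues are exactly the combinatorial core of Brehm's original construction; the hoped-for shortcut through a preliminary piecewise-affine reduction does not circumvent them.
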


As a corollary we deduce that:

\begin{corollario}\label{dense}
$\mathfrak{P}(n,m)$ is dense in $\lip([0,1]^n,\R^m)$.
\end{corollario}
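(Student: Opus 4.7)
The plan is to approximate an arbitrary $f\in\lip([0,1]^n,\R^m)$ in supremum norm by piece-wise congruent maps produced via Brehm's extension theorem (Theorem \ref{PLiso}), using the $1$-Lipschitz property to control the error on a fine finite net.

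Concretely, fix $f\in\lip([0,1]^n,\R^m)$ and $\epsilon>0$, and pick $\delta<\epsilon/2$. I would choose a finite $\delta$-dense subset $M\subseteq[0,1]^n$ (for instance, the rescaled integer lattice $(\delta/\sqrt{n})\Z^n\cap[0,1]^n$), so that every point of $[0,1]^n$ lies within distance $\delta$ of some $v\in M$. Since $f$ is $1$-Lipschitz on all of $[0,1]^n$, its restriction $f|_M$ is in particular $1$-Lipschitz on the finite set $M$. Apply Theorem \ref{PLiso} to obtain a piece-wise congruent extension $\bar f\in\mathfrak{P}(n,m)$ with $\bar f|_M=f|_M$.

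The only non-formal step is to verify that $\bar f$ is $1$-Lipschitz as a map on all of $[0,1]^n$, rather than merely on each simplex of an adapted partition; this is the step I expect to need a tiny argument, although it is not really the hard part. Fix $\Pi\in\tau$ adapted to $\bar f$. Given $x,y\in[0,1]^n$, the segment $[x,y]$ meets only finitely many closures $\cl(P_1),\ldots,\cl(P_k)$ with $P_j\in\Pi$; writing the successive entry/exit points as $x=z_0,z_1,\ldots,z_k=y$, we have $|z_{j-1}-z_j|$ summing to $|x-y|$ and each restriction $\bar f|_{\cl(P_j)}$ is affine isometric, hence
\begin{equation*}
|\bar f(x)-\bar f(y)|\le\sum_{j=1}^k |\bar f(z_{j-1})-\bar f(z_j)|=\sum_{j=1}^k |z_{j-1}-z_j|=|x-y|,
\end{equation*}
using the continuity of $\bar f$ to glue the estimates across simplex boundaries.

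With $\bar f\in\lip([0,1]^n,\R^m)$ in hand and $\bar f=f$ on $M$, the approximation estimate is immediate: for any $x\in[0,1]^n$, pick $v\in M$ with $|x-v|\le\delta$; then
\begin{equation*}
|f(x)-\bar f(x)|\le |f(x)-f(v)|+|\bar f(v)-\bar f(x)|\le 2|x-v|\le 2\delta<\epsilon.
\end{equation*}
Hence $\lVert f-\bar f\rVert_\infty<\epsilon$, and since $\epsilon$ was arbitrary this proves that $\mathfrak{P}(n,m)$ is dense in $\lip([0,1]^n,\R^m)$.
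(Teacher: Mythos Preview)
Your argument is correct and is essentially the same as the paper's: choose a fine finite lattice $M$ in $[0,1]^n$, apply Brehm's extension theorem to $f|_M$, and use that both $f$ and the extension are $1$-Lipschitz to bound $\lVert f-\bar f\rVert_\infty$ by twice the mesh size. The only difference is that you spell out the segment-subdivision argument showing $\bar f$ is globally $1$-Lipschitz, a detail the paper uses implicitly (in writing $\lVert f-g\rVert_\infty\leq \sqrt{n}/2^{k-1}$) but does not record.
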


\begin{proof}
Fix any $f\in\lip([0,1]^n,\R^m)$ and define $M:=(1/2^k)\Z^n\cap[0,1]^n$. Then Theorem \ref{PLiso} implies that we can find a piecewise conguent mapping $g:[0,1]^n\to\R^m$ which coincides with $f$ on $M$. However since $M$ is $\sqrt{n}/2^k$-dense in $[0,1]^n$, we deduce that $\lVert f-g\rVert_\infty\leq \sqrt{n}/2^{k-1}$.
\end{proof}

\begin{osservazione}
The space of piece-wise congruent mappings will play a fundamental role in the proof of the implication (ii)$\Rightarrow$(i) of Theorem \ref{prinprin}, as already explained in the introduction. From the technical point of view they are so important since the composition of a function in $\mathfrak{P}(n,m)$ with piece-wise affine curves is still a piecewise congruent mapping from the line to $\R^m$. This will allow us in Section \ref{Ss4} to prove (with an approach close in the ideas to the proof of the Theorem \ref{isp} in \cite{PreissTiser} by Preiss and Ti\u{s}er) that if two piece-wise congruent mappings are close in the supremum norm, then the set where they have directional derivative along a fixed direction $e$ which are not close has small width with respect to a small cone of axis $e$.
\end{osservazione}

\begin{definizione}[$\epsilon$-differentiability]
A Lipschitz function $f:\R^n\to\R^m$ is said to be non-$\epsilon$-differentiable at $x$ along $e$ if for any $d\in\R^m$ we have:
$$\limsup_{t\to 0} \left\lvert\frac{f(x+te)-f(x)-dt}{t}\right\rvert> \epsilon.$$
On the other hand $d\in\R^m$ is said to be an $\epsilon$-derivative of $f$ at $x\in\R^n$ along $e\in\mathbb{S}^{n-1}$ if:
$$\limsup_{t\to 0} \left\lvert\frac{f(x+te)-f(x)-dt}{t}\right\rvert\leq \epsilon.$$
\end{definizione}

The following proposition shows that the set where a piece-wise congruent mapping is non-$\epsilon$-differentiable along a direction $e$ has small width with respect to a cone with small enough (with respect to $\epsilon$) aperture and axis $e$.

\begin{proposizione}
Fix a piece-wise congruent mapping $f\in\mathfrak{P}(n,m)$, a direction $e\in\mathbb{S}^{n-1}$ and let $\sigma\in(0,1/10)$. For any $\sqrt{2\sigma}<\epsilon<1/10$, the set:
\begin{equation}
\Xi(f,e,\epsilon):=\{x\in(0,1)^n:f \text{ is not }\epsilon\text{-directionally differentiable along }e\},\label{F1}
\end{equation}
has $C(e,\sigma)$-null closure, i.e. $w_{C(e,\sigma)}(\text{cl}(\Xi(f,e,\epsilon)))=0$.
\label{fortuna}
\end{proposizione}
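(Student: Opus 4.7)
The plan is to show that $\cl(\Xi(f,e,\epsilon))$ is contained in a finite union of compact pieces of affine hyperplanes, each of whose normal $n_F$ satisfies $\lvert\langle e,n_F\rangle\rvert>\epsilon$, and then to observe that each such piece is $C(e,\sigma)$-null because the hypothesis $\epsilon>\sqrt{2\sigma}$ forces every $C(e,\sigma)$-curve to traverse a thin slab around the hyperplane quickly.

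First I fix a partition $\Pi\in\tau$ adapted to $f$ and write $f\rvert_P(x)=A_Px+b_P$ with $A_P\in O(n,m)$. Any point $x$ interior to some piece $P$ has $f$ affine near $x$ along $e$, so $A_Pe$ is a $0$-derivative and $x\notin\Xi$; hence $\Xi(f,e,\epsilon)$ lies in the $(n-1)$-skeleton of $\Pi$. For $x$ in the relative interior of an $(n-1)$-face $F$ separating pieces $P^+$ and $P^-$, and for $e$ with $\langle e,n_F\rangle\neq 0$, the line $\{x+te\}$ enters $P^+$ for small $t>0$ and $P^-$ for small $t<0$, so continuity of $f$ gives $f(x+te)-f(x)=tA_{P^+}e$ for small $t>0$ and $=tA_{P^-}e$ for small $t<0$; hence
\[\inf_{d\in\R^m}\limsup_{t\to 0}\left\lvert\frac{f(x+te)-f(x)-dt}{t}\right\rvert=\frac{1}{2}\lvert A_{P^+}e-A_{P^-}e\rvert,\]
attained at the midpoint. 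Thus $x\in\Xi$ iff $\lvert A_{P^+}e-A_{P^-}e\rvert>2\epsilon$; when $\langle e,n_F\rangle=0$, continuity across $F$ forces $A_{P^+}e=A_{P^-}e$ and $x\notin\Xi$. A similar analysis at lower-dimensional strata shows that any $x\in\Xi$ lies in the closure of some $(n-1)$-face whose adjacent pieces satisfy the same inequality. Continuity of $f$ across $F$ also forces $A_{P^+}$ and $A_{P^-}$ to agree on every vector parallel to $F$; decomposing $e=e^{\parallel}+\langle e,n_F\rangle n_F$ with $e^{\parallel}\in n_F^\perp$ yields
\[A_{P^+}e-A_{P^-}e=\langle e,n_F\rangle(A_{P^+}n_F-A_{P^-}n_F),\]
whose norm is at most $2\lvert\langle e,n_F\rangle\rvert$. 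Hence every face whose closure meets $\Xi$ satisfies $\lvert\langle e,n_F\rangle\rvert>\epsilon$; calling $\mathcal{F}$ this finite collection of ``bad'' faces, I get $\cl(\Xi(f,e,\epsilon))\subseteq\bigcup_{F\in\mathcal{F}}\cl(F)$.

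For each $F\in\mathcal{F}$, contained in the hyperplane $H_F=\{x:\langle x,n_F\rangle=c_F\}$, the open set $B_\delta(\cl(F))$ lies in the slab $S_\delta:=\{x:\lvert\langle x,n_F\rangle-c_F\rvert<\delta\}$. For any $\gamma\in\Gamma_{e,\sigma}$ the unit tangent $\gamma'/\lvert\gamma'\rvert$ satisfies $\langle\gamma'/\lvert\gamma'\rvert,e\rangle\geq 1-\sigma$, hence $\lvert\gamma'/\lvert\gamma'\rvert-e\rvert\leq\sqrt{2\sigma}$ and consequently
\[\bigl\lvert\langle\gamma'(t)/\lvert\gamma'(t)\rvert,n_F\rangle\bigr\rvert\geq\lvert\langle e,n_F\rangle\rvert-\sqrt{2\sigma}>\epsilon-\sqrt{2\sigma}>0.\]
This makes $t\mapsto\langle\gamma(t),n_F\rangle$ strictly monotonic with derivative of constant sign, so $\{t:\gamma(t)\in S_\delta\}$ is a single interval on which the total variation of $\langle\gamma,n_F\rangle$ is at most $2\delta$; consequently
\[\int_{\gamma(t)\in B_\delta(\cl(F))}\lvert\gamma'(t)\rvert\,dt\leq\frac{1}{\epsilon-\sqrt{2\sigma}}\int_{\gamma(t)\in S_\delta}\lvert\langle\gamma'(t),n_F\rangle\rvert\,dt\leq\frac{2\delta}{\epsilon-\sqrt{2\sigma}}\]
uniformly in $\gamma$. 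Letting $\delta\to 0$ yields $w_{C(e,\sigma)}(\cl(F))=0$, and finite subadditivity of $w_{C(e,\sigma)}$ (immediate from its definition) gives $w_{C(e,\sigma)}(\cl(\Xi(f,e,\epsilon)))=0$.

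The main obstacle I foresee is the case analysis at lower-dimensional strata of $\Pi$, where several pieces meet along a facet of codimension $\geq 2$: one must verify carefully which pair $A_{P^\pm}$ is selected by the direction $e$ and that it comes from pieces sharing some face of $\mathcal{F}$ whose closure contains $x$. The remaining work is elementary linear algebra on adjacent isometries combined with a one-dimensional monotonicity bound in a thin slab.
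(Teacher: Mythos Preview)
Your argument on the relative interiors of $(n-1)$-faces is correct, and the slab estimate showing that a hyperplane piece with $|\langle e,n_F\rangle|>\epsilon>\sqrt{2\sigma}$ is $C(e,\sigma)$-null is clean. But the step you yourself flag---``a similar analysis at lower-dimensional strata shows that any $x\in\Xi$ lies in the closure of some $(n-1)$-face whose adjacent pieces satisfy the same inequality''---is a genuine gap, not routine bookkeeping. When $x$ lies in the relative interior of a face $\tau$ of codimension $\geq 2$, the two simplices $P^{\pm}$ picked out by the line $x+te$ need not share any $(n-1)$-face; the total jump $A_{P^+}e-A_{P^-}e$ can be distributed over a chain of facets around $\tau$, each contributing at most $2|\langle e,n_{F_i}\rangle|$, and nothing you have written forces any single one of them to exceed $2\epsilon$ (the triangle inequality goes the wrong way). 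So the containment $\cl(\Xi)\subseteq\bigcup_{F\in\mathcal F}\cl(F)$ is not established.

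The paper bypasses this reduction to top-dimensional faces entirely. It works with faces of \emph{all} dimensions: letting $\mathcal S_k$ be the $k$-faces that are $C(e,\sigma)$-null and $\mathcal N$ their (finite, closed) union, it proves $\Xi\subseteq\mathcal N$ by the contrapositive. If $x$ lies on the skeleton but $x\notin\mathcal N$, then the minimal-dimensional face $\Delta$ containing $x$ is not null, so its affine span contains a direction $v$ in the cone; since $x$ is in the relative interior of $\Delta\subset\cl(P)$, $f$ is affine along $v$ near $x$, and $|v-e|\leq\sqrt{2\sigma}<\epsilon$ makes $A_Pv$ an $\epsilon$-derivative of $f$ along $e$. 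Thus $x\notin\Xi$. This treats every dimension uniformly and requires no matching of $P^{\pm}$ to a common facet. Your slab computation is essentially what verifies that a face whose tangent space misses the cone is null, so it is not wasted---it just belongs one level higher, applied to faces $\Delta$ of arbitrary dimension rather than only to $(n-1)$-faces.
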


\begin{proof} Let $\Pi$ be a partition adapted to $f$. For any $k\in\{0,\ldots,n-1\}$ let $\mathcal{F}_k$ be the family of all the $k$-dimensional faces of every $P\in\Pi$, and we let:
$$\mathcal{S}_k:=\{\Delta\in\mathcal{F}_k:\Delta \text{ is a }C(e,\sigma)\text{-null set}\}.$$
If we prove that:
\begin{equation}
    \Xi(f,e,\epsilon)\subseteq \bigcup_{k=0}^{n-1}\bigcup \mathcal{S}_k=:\mathcal{N},
    \label{cl2}
\end{equation}
the thesis of the proposition would follow immediately as $\mathcal{N}$ is the finite union of closed $C(e,\sigma)$-null sets. Since $f$ is not differentiable at every point of $\Xi(f,e,\epsilon)$, we deduce that $\Xi(f,e,\epsilon)\subseteq \bigcup_{k=0}^{n-1}\bigcup \mathcal{F}_k=:\mathcal{M}$. Hence for any $x\in\mathcal{M}\setminus\mathcal{N}$, we define:
$$d(x):=\min\{k:\text{ there exists  a }\Delta\in\mathcal{F}_k\setminus \mathcal{S}_k\text{ such that }x\in\Delta\}.$$
By the minimality of $d(x)$ for any $\Delta\in\mathcal{F}_k\setminus\mathcal{S}_k$ such that $x\in\Delta$, the point $x$ does not belong to the $(d(x)-1)$-skeleton of $\Delta$ and thus there exist $v\in C(e,\sigma)$ and $\delta>0$ such that $x+v[-\delta,\delta]\subseteq \Delta$. If this was not the case the $k$-plane spanned by $\Delta$ would be $C(e,\sigma)$-null and this would contradict the fact that $\Delta\in\mathcal{F}_k\setminus\mathcal{S}_k$. Since $f$ is piece-wise congruent and $x+v[-\delta,\delta]\subseteq \text{cl}(P)$ we deduce that the restriction of $f$ to the segment $x+v[-\delta,\delta]$ is linear and thus differentiable along $v$ at $x$. Therefore:
\begin{equation}
\begin{split}
\limsup_{h\to 0}\left\lvert\frac{f(x+he)-f(x)-h\partial_v f(x)}{h}\right\rvert
\leq\left\lvert v-e\right\rvert\leq(2\sigma)^\frac{1}{2}<\epsilon,
\nonumber
\end{split}
\end{equation}
which implies that $\partial_v f(x)$ is an $\epsilon$-derivative of $f$ at $x$. Since $x\in\mathcal{M}\setminus \mathcal{N}$ was arbitrary, the inclusion \eqref{cl2} is proved.
\end{proof}

\section{The main result}
\label{S2}

Let $f:\R^n\to\R^m$ be a Lipschitz mapping and $x\in\R^n$. We say that $f$ is \emph{fully non-differentiable} at $x$ if for any $e\in\mathbb{S}^{n-1}$ the limit:
\begin{equation}
\lim_{t\to 0}\frac{f(x+te)-f(x)}{t},\nonumber
\end{equation}
does not exists.
We define $f$ to be fully \emph{non-differentiable on a set $E\subseteq \R^n$} if and only if it is fully non-differentiable at every $x\in E$. Moreover we define the \emph{derived set} of $f$ at $x$ as:
\begin{equation}
\mathcal{D}f(x,e):=\bigcap_{r>0}\text{cl}\left({\left\{\frac{f(x+te)-f(x)}{t}:0<t<r\right\}}\right).\nonumber
\end{equation}

The precise statement of our main result Theorem \ref{prinprin} is the following:

\begin{teorema}
\label{main}
Let $E\Subset [0,1]^n$ be an analytic set and let $n\leq m$. The following are equivalent:
\begin{itemize}
\item[(i)]$E$ is contained in a countable union of closed purely unrectifiable sets,
\item[(ii)] the set $S$ of fully non-differentiable maps on $E$ is residual in $\lip([0,1]^n,\R^m)$.
\item[(iii)] the set:
\begin{equation}
S^\prime:=\left\{f\in\lip([0,1]^n,\R^m):\mathcal{D}f(x,v)\supseteq\mathbb{S}^{m-1}\text{ for any }v\in\mathbb{S}^{n-1}\right\}\nonumber,
\end{equation}
is residual in $\lip([0,1]^n,\R^m)$.
\end{itemize}
\end{teorema}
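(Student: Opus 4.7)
The plan is to establish the cycle $(i) \Rightarrow (iii) \Rightarrow (ii) \Rightarrow (i)$. The implication $(iii) \Rightarrow (ii)$ is immediate: if $f \in S'$ then at every $x \in E$ and every $v \in \mathbb{S}^{n-1}$ the derived set $\mathcal{D}f(x, v)$ contains $\mathbb{S}^{m-1}$, which has at least two points, so the difference quotient has no limit and $f$ is fully non-differentiable on $E$; hence $S' \subseteq S$ and residuality passes. For $(i) \Rightarrow (iii)$, which is the content of Proposition \ref{evev}, I write $E = \bigcup_k E_k$ with each $E_k$ closed and purely unrectifiable, fix countable dense sets $\{v_j\} \subseteq \mathbb{S}^{n-1}$, $\{w_l\} \subseteq \mathbb{S}^{m-1}$ and sequences $\epsilon_p, r_p \downarrow 0$, and consider the sets
$$O_{k,j,l,p} := \bigg\{f \in \lip([0,1]^n, \R^m) : \forall x \in E_k,\ \exists\, t \in (0, r_p) \text{ with } \bigg|\tfrac{f(x+tv_j) - f(x)}{t} - w_l\bigg| < \epsilon_p\bigg\}.$$
Openness follows from compactness of $E_k$ and joint continuity in $f$; density rests on the Alberti--Marchese construction sketched in the introduction, where the perturbation $\tilde f(x) := f(x) - Df(x)[G(x)] + g_{v_j}(x)\, w_l$ remains close to $f$ in the sup norm by $(\alpha)$ while having directional derivative approximately $w_l$ along $v_j$ at every point of $E_k$ by $(\beta)$, so that a suitable small $t$ can be found in $(0, r_p)$. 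Intersecting over $(k, j, l, p)$ yields a dense $G_\delta$ contained in $S'$, once one passes from the countable dense pair $(v_j, w_l)$ to arbitrary $(v, w) \in \mathbb{S}^{n-1} \times \mathbb{S}^{m-1}$ by the $1$-Lipschitz bound on $f$.

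For the contrapositive $(ii) \Rightarrow (i)$, assume (i) fails. By Theorem \ref{tSoleki} there exist a closed set $F$ and a decreasing sequence $\{E_j\}$ of relatively open dense sets in $F$ with $E \cap F \supseteq \bigcap_j E_j$, such that every portion of $F$ has positive width with respect to some cone; Proposition \ref{splitter} then lets me reduce to a fixed $C(u, \sigma)$. I invoke Theorem \ref{BMgioco} and construct a winning strategy for Player II in the Banach--Mazur game for $B := \lip([0,1]^n,\R^m) \setminus S$; this is Proposition \ref{BMstrategia}. On the $k$-th move Player II picks $f_k \in \mathfrak{P}(n, m) \cap U_k$ via Corollary \ref{dense} and plays $V_k := B_{r_k}(f_k)$ with $r_k \downarrow 0$ summable. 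In parallel, Player II maintains directions $e_k \to e$, shrinking amplitudes $\sigma_k, \epsilon_k$ and relatively open non-empty $M_k \subseteq F$ of positive $C(e_k, \sigma_k)$-width. Proposition \ref{fortuna} guarantees that $\cl(\Xi(f_k, e_k, \epsilon_k))$ is $C(e_k, \sigma_k)$-null, so Proposition \ref{spessspess} lets me excise it from $E_{k-1} \cap M_{k-1}$ while keeping positive width (possibly after rotating the cone axis via Proposition \ref{salvato}) to obtain $M_k$. The unique uniform limit $f \in \bigcap_k V_k$ is again $1$-Lipschitz, and $\bigcap_k M_k$ is non-empty by the finite intersection property applied to $\{\overline{M_k}\}$ and contained in $E \cap F$; the nested $\epsilon_k$-differentiability controls then force $f$ to be directionally differentiable along $e$ at every such point, so $f \in B$. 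Hence $B$ is residual and $S$ is meagre, negating (ii).

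The main obstacle is the coordinated choice of the four parameters $(r_k, \epsilon_k, \sigma_k, e_k)$ in the Banach--Mazur induction. They must shrink fast enough that the nested balls collapse to a single limit function whose directional derivatives can be read off from the finite-stage data, yet slowly enough that the hypothesis $\sqrt{2\sigma_k} < \epsilon_k$ of Proposition \ref{fortuna} remains valid and that Proposition \ref{salvato} admits a cone reorientation preserving positive width of $M_k$ as the axis drifts from $e_k$ to $e_{k+1}$. The key technical enabler is Theorem \ref{tSoleki}, which furnishes both the dense $G_\delta$ of $E\cap F$ inside $F$ (so that $\bigcap M_k$ lands in $E$) and the reservoir of positive width of $F$ that the width-subtracting steps of Proposition \ref{spessspess} progressively deplete without exhausting. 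Balancing these constraints, and verifying that the $\epsilon_k$-differentiability at stage $k$ survives all subsequent perturbations of size $\sum_{j>k} r_j$, is the technical heart of Section \ref{Ss4}.
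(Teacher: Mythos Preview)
Your overall architecture matches the paper's exactly: the cycle $(i)\Rightarrow(iii)\Rightarrow(ii)\Rightarrow(i)$, the reduction of $(i)\Rightarrow(iii)$ to the open-and-dense sets of Proposition~\ref{evev} via the Alberti--Marchese roughening, and the reduction of $\neg(i)\Rightarrow\neg(ii)$ to a Banach--Mazur strategy inside the closed set $F$ supplied by Theorem~\ref{tSoleki}. The sketch of $(i)\Rightarrow(iii)$ is essentially correct; openness of your sets $O_{k,j,l,p}$ does hold, though it needs the compactness of $E_k$ to extract a uniform positive lower bound on the witnessing $t$, which is why the paper's sets $S_k^{v,w}$ build in the lower cutoff $\delta$ explicitly.

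There is, however, a genuine gap in your Banach--Mazur construction. You use only Proposition~\ref{fortuna} to excise the set where $f_k$ fails to be $\epsilon_k$-differentiable along $e_k$, and then assert that ``the nested $\epsilon_k$-differentiability controls force $f$ to be directionally differentiable along $e$'', with the survival of stage-$k$ information under later perturbations attributed to the smallness of $\sum_{j>k} r_j$. But uniform closeness of Lipschitz functions says nothing about closeness of their directional increments: one can have $\lVert f_k-f_{k+1}\rVert_\infty$ arbitrarily small while $\partial_{e_k}f_k$ and $\partial_{e_k}f_{k+1}$ differ by order $1$ on a set of full measure. Knowing that each $f_k$ is $\epsilon_k$-differentiable at $x$ therefore does not, by itself, yield any Cauchy control on the approximate derivatives $d_k$, and hence no differentiability of the limit $f$.

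The missing ingredient is Theorem~\ref{lemma1} (built on the maximal-function estimate of Proposition~\ref{princ}): for piece-wise congruent maps $f_k,g$ with $\lVert f_k-g\rVert_\infty$ small, the set of $x$ where the increment $g(x+te_k)-g(x)$ fails to track $f_k(x+te_k)-f_k(x)$ to within $\epsilon\lvert t\rvert$ has small $C(e_k,\sigma_k)$-width. This is precisely point~($\beta'$) of the introduction and point~(iv)(b) of the paper's strategy. It is what dictates the radius of $V_k$ (condition~(i) of Theorem~\ref{lemma1}), and applying it with $g=f_{k+1}\in V_k\cap\mathfrak{P}(n,m)$ yields the comparison~(vii) that makes the $d_k$ converge. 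Proposition~\ref{fortuna} alone handles only the single-function non-differentiability set; without Theorem~\ref{lemma1} the inductive step cannot link consecutive stages, and the conclusion $f\in B$ does not follow.
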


We state here two Propositions which toghether imply Theorem \ref{main}. Their proofs are postponed to Section \ref{albalb} and Section \ref{Ss4}.

\begin{restatable}{proposizione}{evev}
Let $k_0\in\N$, $E\subseteq\left[1/k_0,1-1/k_0\right]^n$ be a compact purely unrectifialbe set,  $v\in\mathbb{S}^{n-1}$ and $w\in\mathbb{S}^{m-1}$.
For any $k>k_0$ we define $S_{k}^{v,w}$ to be subset of those $f\in\lip([0,1]^n,\R^m)$ for which there exists a $\delta=\delta(f)<1/k$ such that for any $x\in E$ there is a $\tau=\tau(x)\in(-1/k,-\delta)\cup(\delta,1/k)$ which satisfy:
$$\left\lvert\frac{f(x+\tau e)-f(x)-\tau w}{\tau}\right\rvert<\frac{1}{k}-\delta.$$
Then $S_{k}^{v,w}$ is open and dense in $\lip([0,1]^n,\R^m)$.
\label{evev}
\end{restatable}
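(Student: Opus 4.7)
The plan is to prove openness and density separately. Openness is a routine compactness--continuity argument; density is the substantive part and relies on the uniform pure unrectifiability of $E$ afforded by Proposition \ref{equivequiv}.

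\textbf{Openness.} Given $f\in S_k^{v,w}$ with parameter $\delta=\delta(f)$, for each $\tau$ with $|\tau|\in(\delta,1/k)$ the set
\[U_\tau:=\Bigl\{x\in E:\bigl|\tfrac{f(x+\tau v)-f(x)-\tau w}{\tau}\bigr|<\tfrac{1}{k}-\delta\Bigr\}\]
is relatively open in $E$ by continuity of $f$, and by hypothesis $\{U_{\tau(x)}\}_{x\in E}$ covers $E$. Compactness of $E$ yields a finite subcover by $U_{\tau_1},\ldots,U_{\tau_N}$ with $|\tau_i|\in(\delta,1/k)$. If $g\in\lip([0,1]^n,\R^m)$ satisfies $\|g-f\|_\infty<\delta^2/4$, then for $x\in U_{\tau_i}$ the triangle inequality gives
\[\bigl|\tfrac{g(x+\tau_i v)-g(x)-\tau_i w}{\tau_i}\bigr|\leq \bigl|\tfrac{f(x+\tau_i v)-f(x)-\tau_i w}{\tau_i}\bigr|+\tfrac{2\|g-f\|_\infty}{|\tau_i|}<\tfrac{1}{k}-\tfrac{\delta}{2},\]
so $g\in S_k^{v,w}$ with parameter $\delta/2$, proving that $S_k^{v,w}$ is open.

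\textbf{Density.} Fix $f\in\lip([0,1]^n,\R^m)$ and $\epsilon>0$. Following the scheme sketched in the introduction, I first approximate $f$ by a $C^2$, $(1-3\eta)$-Lipschitz map $\tilde f$ with $\|\tilde f-f\|_\infty<\epsilon/3$ (obtained by mollifying $f$ and rescaling by $1-3\eta$), for a small parameter $\eta>0$ to be chosen. Since $E$ is compact and purely unrectifiable, Proposition \ref{equivequiv} yields that $E$ is uniformly purely unrectifiable, i.e.\ $w_{C(v,\sigma)}[E]=0$ for every $\sigma>0$. An Alberti--Marchese sawtooth construction in the spirit of Lemma 4.12 of \cite{AlbertiMarchese}, carried out on progressively thinner $C(v,\sigma)$-neighbourhoods of $E$, then produces a $1$-Lipschitz scalar $h:[0,1]^n\to\R$ with $\|h\|_\infty<\epsilon_0$ such that for every $x\in E$ there is $\tau(x)$ with $|\tau(x)|\in(\delta_0,1/k)$ and
\[|h(x+\tau(x)v)-h(x)-\tau(x)|<\epsilon_0|\tau(x)|;\]
setting $G(x):=h(x)v$ gives a vector-valued companion with the analogous property.

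I then define
\[g(x):=\tilde f(x)-D\tilde f(x)\,G(x)+h(x)\,w,\]
and check by a first-order Taylor expansion at scale $\tau(x)$ that on $E$ one has $(g(x+\tau v)-g(x))/\tau=D\tilde f(x)v-D\tilde f(x)v+w+O(\epsilon_0+|\tau|)=w+O(\epsilon_0+|\tau|)$, which falls below $1/k-\delta$ for appropriate choices of $\epsilon_0$ and $\delta$. The sup-norm estimate $\|g-\tilde f\|_\infty\leq\|D\tilde f\|_\infty\|G\|_\infty+\|h\|_\infty\leq 2\epsilon_0$, together with $\|\tilde f-f\|_\infty<\epsilon/3$, gives $\|g-f\|_\infty<\epsilon$ once $\epsilon_0$ is small enough.

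\textbf{Main obstacle.} The delicate step is verifying that $g$ actually lies in $\lip([0,1]^n,\R^m)$. The naive term-by-term estimate
\[\mathrm{Lip}(g)\leq(1-3\eta)+\bigl[(1-3\eta)+C\epsilon_0\bigr]+1\approx 3\]
is off by a factor of $3$. The resolution is the ``fine tuning'' alluded to in the introduction: rather than using $h$ and $G$ globally, one must localize the perturbation to a thin $C(v,\sigma)$-neighbourhood of $E$ and synchronize the sawtooth structure of $h$ and $G$ with the pointwise slope of $\tilde f$, so that the three perturbative contributions of $-D\tilde f\cdot G+hw$ do not add naively but instead compensate the slope of $\tilde f$ where they are active. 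The arbitrarily small $C(v,\sigma)$-width of $E$ supplied by uniform pure unrectifiability is exactly the geometric room required for this cancellation, and the careful bookkeeping of the constants $\eta,\epsilon_0,\delta_0,\sigma$ is the heart of the argument, carried out in Section \ref{albalb}.
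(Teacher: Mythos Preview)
Your openness argument is correct and in fact slightly cleaner than the paper's, which implicitly uses the same compactness but does not spell out the finite subcover or the passage to the parameter $\delta/2$.

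The density argument, however, has a genuine gap at exactly the point you flag as the ``main obstacle'', and the fix you propose (localisation to a thin neighbourhood and synchronising the sawtooth with the slope of $\tilde f$) does not address the real difficulty. With $G(x)=h(x)v$ one has $DG\approx v\otimes v$ on $E$, a rank-one projection, so near $E$
\[
Dg\;\approx\; D\tilde f\,(I-v\otimes v)+w\otimes v .
\]
The first summand retains the full action of $D\tilde f$ on $v^\perp$, while the second contributes an independent term of norm $1$; when $D\tilde f$ maps some unit vector of $v^\perp$ close to $w$, which nothing in your construction prevents, $\|Dg\|$ is close to $\sqrt 2$. Concretely, in $\R^2$ take $v=e_1$ and $\tilde f(x_1,x_2)=(1-3\eta)x_2\,w$: then $D\tilde f[G]\equiv 0$, so $g=\bigl((1-3\eta)x_2+h\bigr)w$ and on $E$ one gets $Dg\approx w\otimes\bigl((1-3\eta)e_2+e_1\bigr)$, of norm $\sqrt{(1-3\eta)^2+1}>1$. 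No amount of localisation or sawtooth alignment along $v$ repairs this, because the excess comes from the directions \emph{orthogonal} to $v$.

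The paper's resolution is structurally different: it builds $G$ from width functions along a \emph{full} orthonormal basis $e_1,\ldots,e_n$ (Definition~\ref{defi}), so that $DG(x)\approx\text{id}_{\R^n}$ on a neighbourhood of $E$ (Proposition~\ref{ide}). Then $D\tilde f\cdot DG$ cancels $D\tilde f$ in \emph{all} directions, not just along $v$, leaving essentially $u\otimes\nabla H_v$, of norm $\approx 1$. A second-order correction $\tfrac12 D^2 f[G,G]$ is also inserted (Definition~\ref{rough}) to absorb the $D^2 f[\,\cdot\,,G]$ term that appears when differentiating $Df(x)[G(x)]$, and the whole expression is multiplied by $(1-\eta)$; the Lipschitz bound is then obtained in Proposition~\ref{lung}(i) by a case analysis inside and outside $B_{\delta_{\mathcal E}}(E)$. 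The missing idea in your proposal is precisely this multi-directional cancellation via $DG\approx\text{id}$, not localisation.
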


\begin{restatable}{proposizione}{ueue}
\label{BMstrategia}
Let $k_0\in\N$, $n\leq m$ and suppose $F\subseteq [1/k_0,1-1/k_0]^n$ is a closed non-empty set such that:
\begin{itemize}
\item[(i)] there exists some Borel set $E\subseteq[1/k_0,1-1/k_0]^n$ for which $E\cap F$ is residual in $F$,
\item[(ii)] for every open set $U$ such that $U\cap F\neq \emptyset$ we have that $U\cap F$ has positive width with respect to some cone.
\end{itemize}
Denote by $S\subseteq \lip([0,1]^n,\R^m)$ the set of functions which are differentiable along some direction at least a point of $E\cap F$. Then $S$ is residual in $\lip([0,1]^n,\R^m)$.
\end{restatable}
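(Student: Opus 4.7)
The plan is to invoke the Banach--Mazur game characterisation of residuality (Theorem \ref{BMgioco}, stated later in the paper): it suffices to build a winning strategy for Player II forcing $\bigcap_k V_k\subseteq S$. The main ingredients are Corollary \ref{dense} (density of piecewise congruent mappings), Proposition \ref{fortuna} (the non-$\epsilon$-differentiability set of a piecewise congruent map has width-null closure in a sufficiently narrow cone), Proposition \ref{spessspess} (width subadditivity), and Proposition \ref{salvato} (tilting a positive-width cone to a narrower one), together with the structural hypotheses on $F$.

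Before starting the game I would apply Proposition \ref{splitter} to $F$ and a Baire category argument to the residual set $E\cap F$ to reduce to the case where $F$ itself has every portion of positive $C(u_\star,\sigma_\star)$-width for some fixed pair $(u_\star,\sigma_\star)$; write $E\cap F\supseteq\bigcap_k E_k$ with each $E_k$ relatively open dense in $F$. Inductively at stage $k$, Player II selects: a piecewise congruent $f_k\in U_k$ (Corollary \ref{dense}); apertures $\sigma_k^\sharp,\epsilon_k$ with $\sqrt{2\sigma_k^\sharp}<\epsilon_k<2^{-k}$; a direction $e_k$ with $|e_k-e_{k-1}|<2^{-k}$, produced by Proposition \ref{salvato} applied to $M_{k-1}$ (for which $\sigma_{k-1}^\sharp$ was chosen quadratically small), so that $w_{C(e_k,\sigma_k^\sharp)}(M_{k-1})>0$; a non-empty relatively open portion $M_k\subseteq F$ with $\cl(M_k)\subseteq E_{k-1}\cap M_{k-1}$, disjoint from the $C(e_k,\sigma_k^\sharp)$-null closed set $\cl\Xi(f_k,e_k,\epsilon_k)$, and with $w_{C(e_k,\sigma_k^\sharp)}(M_k)>0$; a scale $t_k>0$, chosen in $(t_{k-1}/2,t_{k-1})$, so small that $f_k$ admits a uniform $\epsilon_k$-derivative $d_k\in\R^m$ along $e_k$ on $\cl(M_k)$ at all scales $|t|<t_k$; and finally a radius $r_k\leq 2^{-k}t_k$ with $V_k:=B_{r_k}(f_k)\subseteq U_k$.

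After infinitely many rounds, the $f_k$ converge uniformly to some $f\in\bigcap_k V_k$, the $e_k$ converge to some $e\in\mathbb{S}^{n-1}$, and the nested compacta $\cl(M_k)$ have nonempty intersection contained in $\bigcap_k E_{k-1}\subseteq E\cap F$. For any $x$ in this intersection, the elementary estimate
\[\Big|\tfrac{f(x+te)-f(x)}{t}-d_k\Big|\leq 2\epsilon_k+|e-e_k|+\tfrac{2r_k}{t}\lesssim 2^{-k},\]
valid uniformly at scales $t\in(t_k/2,t_k)$ (using $1$-Lipschitzness of $f_k$ to swap $e_k$ for $e$ and $\|f-f_k\|_\infty\leq 2r_k$ to swap $f_k$ for $f$), combined with the overlap of consecutive scale ranges, forces $(d_k)$ to be Cauchy with some limit $d_\infty$ and yields $\lim_{t\to 0}(f(x+te)-f(x))/t=d_\infty$; hence $f\in S$.

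The main obstacle is the construction of $M_k$: after removing $\cl\Xi(f_k,e_k,\epsilon_k)$ (width-null by Proposition \ref{fortuna}) and intersecting with the dense open $E_{k-1}$, one must still extract a non-empty open portion of $F$ with positive $C(e_k,\sigma_k^\sharp)$-width. Proposition \ref{spessspess} handles the removal of $\cl\Xi$, but passing through $E_{k-1}$ is delicate because $F\setminus E_{k-1}$ is merely nowhere dense and need not be width-null; one remedies this by a further appeal to Proposition \ref{salvato} applied to the non-empty open portion $(M_{k-1}\cap E_{k-1})\setminus\cl\Xi(f_k,e_k,\epsilon_k)$, which inherits positive $C(u_\star,\sigma_\star)$-width from the reduction and thence positive width in some narrow cone near $e_k$. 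Once the rates $(\sigma_k^\sharp,\epsilon_k,r_k,t_k)$ are balanced accordingly, the rest is bookkeeping and the finite-intersection property of compact sets.
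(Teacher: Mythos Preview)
Your overall strategy---Banach--Mazur game, piecewise congruent $f_k$, converging directions $e_k$, nested portions $M_k$, and the scale-$t_k$ trick with $r_k\ll t_k$---is genuinely different from the paper's and would, if it worked, be a real simplification: the paper relies on Theorem~\ref{lemma1} (proved via the maximal-function estimate of Proposition~\ref{princ}) to control increments $\lvert(f_k-f_{k-1})(x+te)-(f_k-f_{k-1})(x)\rvert$ at \emph{all} scales $t$, whereas you only need the crude bound $2\lVert f-f_k\rVert_\infty/\lvert t\rvert$ at the single scale range $(t_k/2,t_k)$.

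However, there is a genuine gap in the construction of $M_k$. Your single application of Proposition~\ref{splitter} at the outset gives $F$ every portion of positive $C(u_\star,\sigma_\star)$-width for a \emph{fixed} cone. At stage $k$ you produce $e_k$ close to $e_{k-1}$ via Proposition~\ref{salvato} applied to the positive $C(e_{k-1},\sigma_{k-1}^\sharp)$-width of $M_{k-1}$, and you then need $M_k\subseteq (M_{k-1}\cap E_{k-1})\setminus\cl\Xi(f_k,e_k,\epsilon_k)$ with positive $C(e_k,\sigma_k^\sharp)$-width so that the induction continues. Removing $\cl\Xi$ is harmless by Proposition~\ref{spessspess}, but intersecting with the merely dense-open $E_{k-1}$ is not: nothing guarantees that $(M_{k-1}\setminus\cl\Xi)\cap E_{k-1}$ retains positive $C(e_k,\sigma_k^\sharp)$-width. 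Your remedy---fall back on the $C(u_\star,\sigma_\star)$-width of that portion and re-apply Proposition~\ref{salvato}---yields a narrow cone with axis somewhere in $C(u_\star,\sigma_\star)$, not ``near $e_k$''; so either the inductive hypothesis on $M_k$ fails, or you must redefine $e_k$ and then the sequence $(e_k)$ merely wanders inside the fixed cone $C(u_\star,\sigma_\star)$ and need not converge. The paper meets exactly this difficulty by iterating Proposition~\ref{splitter} at \emph{every} stage, producing nested closed $\mathcal{A}_k\subseteq\mathcal{A}_{k-1}$ each with every portion of positive $C(e_k,\sigma_k)$-width; then any nonempty relatively open subset of $\mathcal{A}_k$---in particular one inside $M_{k-1}\cap E_{k-1}$---automatically carries positive width in the correct cone, and $e_k\in C(e_{k-1},\sigma_{k-1})$ with $\sigma_{k-1}\to 0$ forces convergence.

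There is a second, related problem: you assert without justification that a uniform scale $t_k$ exists on $\cl(M_k)$. For piecewise congruent $f_k$ the scale at which the $\epsilon_k$-derivative estimate is realised can shrink to $0$ as $x$ approaches a face of $\Pi_{f_k}$ that is \emph{not} $C(e_k,\sigma_k^\sharp)$-null (hence not contained in $\cl\Xi$), because the slopes $A_P e_k$ and $A_{P'}e_k$ on the two sides may differ by more than $2\epsilon_k$. Shrinking $M_k$ into the interior of a single simplex would cure this, but such a shrinking preserves positive width only if one again has the hereditary positive-width property that the iterated use of Proposition~\ref{splitter} provides. The paper's use of Theorem~\ref{lemma1} avoids this issue entirely, since its increment estimate holds for all $t$ in a fixed macroscopic range.
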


Despite the fact that the proofs of the above propositions have been postponed, we prove here that they actually imply Theorem \ref{main}.

\begin{proposizione}
Propositions \ref{evev} and \ref{BMstrategia} imply Theorem \ref{main}.
\end{proposizione}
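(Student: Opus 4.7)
The plan is to close the cycle $(iii) \Rightarrow (ii) \Rightarrow (i) \Rightarrow (iii)$. The implication $(iii) \Rightarrow (ii)$ is immediate: since $\mathbb{S}^{m-1}$ has at least two distinct points, any $f \in S'$ has its difference quotients along every direction at every point of $E$ clustering at two different values, so the one-sided limit, and hence the directional derivative, cannot exist; thus $S' \subseteq S$ and residuality transfers.

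For $(i) \Rightarrow (iii)$, write $E \subseteq \bigcup_{j} E_j$ with each $E_j$ closed and purely unrectifiable; since $E \Subset (0,1)^n$, replacing each $E_j$ by $E_j \cap \overline{E}$ makes every $E_j$ compact and contained in some $[1/k_0^{(j)}, 1 - 1/k_0^{(j)}]^n$, so that Proposition \ref{evev} applies. Fix countable dense families $\{v_i\} \subseteq \mathbb{S}^{n-1}$ (closed under $v \mapsto -v$) and $\{w_l\} \subseteq \mathbb{S}^{m-1}$. Invoking Proposition \ref{evev} for each triple $(E_j, v_i, w_l)$, every $S_k^{v_i,w_l}(E_j)$ is open and dense, so by the Baire category theorem $T := \bigcap_{j,i,l,k} S_k^{v_i,w_l}(E_j)$ is residual in $\lip([0,1]^n, \R^m)$. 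For $f \in T$, $x \in E$ (lying in some $E_j$), $v \in \mathbb{S}^{n-1}$ and $w \in \mathbb{S}^{m-1}$, choose $(v_i, w_l)$ close to $(v, w)$; membership in $\bigcap_k S_k^{v_i,w_l}(E_j)$ produces $\tau_k \to 0$ with $\lvert(f(x + \tau_k v_i) - f(x))/\tau_k - w_l\rvert \to 0$, and the $1$-Lipschitz bound $\lvert(f(x + \tau_k v) - f(x + \tau_k v_i))/\tau_k\rvert \leq |v - v_i|$ transfers this to $v$ and $w$, so $w \in \mathcal{D}f(x, v)$ by arbitrariness of the approximation.

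For $(ii) \Rightarrow (i)$ I argue the contrapositive. Assume (i) fails; since closed subsets of $\R^n$ are $\sigma$-compact and Proposition \ref{equivequiv} identifies compact purely unrectifiable sets with uniformly purely unrectifiable ones, option (i) of Theorem \ref{tSoleki} also fails. The strengthened form of Theorem \ref{tSoleki}(ii) thus provides a non-empty closed set $F$ with $E \cap F$ containing a dense $G_\delta$ in $F$, and with every non-empty relatively open portion of $F$ of positive width with respect to some cone. Density of $E \cap F$ in $F$ combined with $E \Subset (0,1)^n$ forces $F \subseteq \overline{E} \subseteq [1/k_0, 1 - 1/k_0]^n$ for some $k_0$, so the hypotheses of Proposition \ref{BMstrategia} are met verbatim. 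That proposition yields a residual subset $S^* \subseteq \lip([0,1]^n, \R^m)$ of functions directionally differentiable at some point of $E \cap F \subseteq E$; any such $f$ is not fully non-differentiable on $E$, so $S^* \subseteq \lip([0,1]^n, \R^m) \setminus S$, and residuality of $S^*$ forces $S$ to be meagre in the Baire space $\lip([0,1]^n, \R^m)$, making (ii) fail.

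The main obstacle here is not mathematical but organisational: essentially all of the analytic content is packaged inside Propositions \ref{evev} and \ref{BMstrategia} and Theorem \ref{tSoleki}, and the present deduction amounts to a Baire-category argument plus careful bookkeeping. The two delicate points are (a) the reduction to compact $E_j$ inside a shrunken cube in $(i) \Rightarrow (iii)$, achieved by intersecting with $\overline{E}$ and exploiting $E \Subset (0,1)^n$; and (b) the automatic confinement $F \subseteq [1/k_0, 1 - 1/k_0]^n$ in $(ii) \Rightarrow (i)$, forced by density of $E \cap F$ in $F$ together with the compact containment of $E$.
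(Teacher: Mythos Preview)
Your proof is correct and follows essentially the same route as the paper's: the cycle $(iii)\Rightarrow(ii)\Rightarrow(i)\Rightarrow(iii)$ via the trivial inclusion $S'\subseteq S$, the Baire intersection over countable dense directions/targets using Proposition~\ref{evev}, and the contrapositive via Theorem~\ref{tSoleki} feeding Proposition~\ref{BMstrategia}. You are in fact more explicit than the paper on two points it leaves implicit---the reduction to compact $E_j$ in $(i)\Rightarrow(iii)$ by intersecting with $\overline{E}$, and the confinement $F\subseteq[1/k_0,1-1/k_0]^n$ in $(ii)\Rightarrow(i)$ via density of $E\cap F$ in $F$.
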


\begin{proof}
First of all we tackle the implication (i)$\Rightarrow$(iii). Assume that $E$ is compact and let $k_0\in\N$ be such that $E\subset\left[1/k_0,1-1/k_0\right]^n$. Let $D^n$ and $D^m$ be countable dense subsets of $\mathbb{S}^{n-1}$ and $\mathbb{S}^{m-1}$ respectively. Define:
\begin{equation}
\tilde{S}:=\bigcap_{v\in D^n}\bigcap_{w\in D^m}\bigcap_{k>k_0} S_{k}^{v,w}\nonumber,
\end{equation}
where the sets $S_{k}^{v,w}$ were introduced in the statement of Proposition \ref{evev}.
Since countable intersection of residual sets is residual, if Proposition \ref{evev} holds true, then $\tilde{S}$ is residual and thus it suffices to prove that $\tilde{S}\subseteq S^\prime$ to conclude. For any $v\in\mathbb{S}^{n-1},w\in\mathbb{S}^{m-1}$ and any integer $i>k_0$ there are $v(i)\in D^n$ and $w(i)\in D^m$ such that $\lvert v-v(i)\rvert+\lvert w-w(i)\rvert\leq1/2^i$.
For any $f\in\tilde{S}$ and any $i\in\N$ there exists $t_i\in\left(-1/i,1/i\right)$ such that:
\begin{equation}
\begin{split}
\left\lvert\frac{f(x+t_iv(i))-f(x)-t_iw(i)}{t_i}\right\rvert<\frac{1}{2^i}\nonumber.
\end{split}
\end{equation}
Therefore we deduce that:
\begin{equation}
\begin{split}
   \left\lvert\frac{f(x+t_iv)-f(x)-t_iw}{t_i}\right\rvert<&\left\lvert \frac{f(x+t_iv(i))-f(x)-t_iw(i)}{t_i}\right\rvert+\lvert v-v(i)\rvert+\lvert w-w(i)\rvert\leq\frac{1}{2^{i-1}}.
\nonumber 
\end{split}
\end{equation}
Thanks to the above inequality, sending $i\to\infty$ we have:
\begin{equation}
\lim_{i\to\infty}\frac{f(x+t_iv)-f(x)}{t_i}=w\nonumber.
\end{equation}
This implies that for any $v\in\mathbb{S}^{n-1}$ and $w\in\mathbb{S}^{m-1}$ we have that $w\in \mathcal{D}f(x,v)$ and thus $\tilde{S}\subseteq S^\prime$.

Secondly we prove the implication (ii)$\Rightarrow$(i). Once again we can assume without loss of generality that $E\Subset [0,1]^n$ is compact, and thus there exists $k_0\in\N$ such that  $E\subseteq \left[1/k_0,1-1/k_0\right]^n$. Suppose by contraction that $E$ cannot be covered by countably many closed purely unrectifiable sets. By Theorem~\ref{tSoleki} we find a closed nonempty set $F\subseteq [0,1]^n$ with every portion of positive width such that $E\cap F$ is residual in $F$. Applying Proposition~\ref{BMstrategia} we get a contradiction.

Eventually the implication (iii)$\Rightarrow$(ii) is trivial since $S^\prime\subseteq S$.
\end{proof}

\begin{osservazione}
Since Proposition \ref{evev} holds even in the case $m\leq n$, the implication (i)$\Rightarrow$(iii) is true for any couple $n,m$.
\end{osservazione}

\section{Roughing of smooth functions on purely unrectifiable sets}
\label{albalb}

We introduce here the width function associated to an open set $\Omega\subseteq\R^n$. Such a function was first introduced in a more complex and general form in \cite{ACP0.2} and in the present, simplified form in \cite{AlbertiMarchese}. 

\begin{definizione}
Let $e\in\mathbb{S}^{n-1}$ and $\sigma>0$. We define the width function of $\Omega$ with respect to the cone $C(e,\sigma)$ as:
\begin{equation}
w_{C(e,\sigma)}[\Omega](x):=\sup_{\gamma\in\Gamma_{e,\sigma}^{x}}\{\mathcal{H}^1(\Omega\cap \gamma(I))-\lvert x-x_\gamma\rvert\},\nonumber
\end{equation}
where $\Gamma_{e,\sigma}^{x}$ is the set of canonical parametrizations of piece-wise affine $C(e,\sigma)$-curves $\gamma:(a,b)\to\R^n$ for some $a<b$ and  which endpoint $x_\gamma:=\gamma(b)$ is of the form $x_\gamma=x+se$ for some $s\geq 0$.
\end{definizione}

We recall here Proposition 7.4 of \cite{AlbertiMarchese}, which is a list of properties of the width function associated to a neighbourhood of a purely unrectifiable set:

\begin{proposizione}\label{alb}
Let $E$ be a purely unrectifiable compact subset of $(0,1)^n$. Fix any $e\in\mathbb{S}^{n-1}$ and $\sigma\in(0,1)$. For any $\epsilon>0$ there exists a $\delta>0$ such that:
\begin{equation}
\mathcal{H}^1(\gamma(I)\cap B_\delta(E))<\epsilon,\label{eq:2:2}
\end{equation}
for any $C(e,\sigma)$-curve $\gamma:I\to\R^n$.
Moreover the width function along $C(e,\sigma)$ associated to $B_\delta(E)$ has the following properties:
\begin{itemize}
\item[(i)] $0\leq w_{C(e,\sigma)}[B_\delta(E)](x)\leq \epsilon$ for any $x\in\R^n$,
\item[(ii)] $w_{C(e,\sigma)}[B_\delta(E)](x)\leq w_{C(e,\sigma)}[B_\delta(E)](x+se)\leq w_{C(e,\sigma)}[B_\delta(E)](x)+s$ for every $s>0$ and any $x\in\R^n$. Moreover, if the segment $[x,x+se]$ is contained in $B_\delta(E)$, then:$$w_{C(e,\sigma)}[B_\delta(E)](x+se)=w_{C(e,\sigma)}[B_\delta(E)](x)+s,$$
\item[(iii)] $\lvert w_{C(e,\sigma)}[B_\delta(E)](x+v)-w_{C(e,\sigma)}[B_\delta(E)](x)\rvert\leq\beta(\sigma)\lvert v\rvert$ for every $v\in V:=e^{\perp}$, where $\beta(\sigma)$ is the quantity defined in Proposition \ref{canone},
\item[(iv)]$w_{C(e,\sigma)}[B_\delta(E)](\cdot)$ is $1+\beta(\sigma)$-Lipschitz.
\end{itemize}
\end{proposizione}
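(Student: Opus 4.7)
My plan is to first establish the key quantitative statement that, for each $\epsilon>0$, there is $\delta>0$ with $\mathcal{H}^1(\gamma(I)\cap B_\delta(E))<\epsilon$ for every $C(e,\sigma)$-curve $\gamma$; the structural properties (i)--(iv) then follow by routine manipulations of the competitors in the supremum defining $w_{C(e,\sigma)}[B_\delta(E)](\cdot)$.

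For the quantitative statement I would argue by contradiction. Suppose there exist $\epsilon>0$ together with sequences $\delta_k\downarrow 0$ and $C(e,\sigma)$-curves $\gamma_k$ with $\mathcal{H}^1(\gamma_k(I_k)\cap B_{\delta_k}(E))\geq\epsilon$. Since $E\Subset (0,1)^n$ and $\delta_k$ is eventually small, only the portion of $\gamma_k$ inside $[0,1]^n$ contributes, so I may replace $\gamma_k$ by the sub-arc of its canonical parametrization with image in $[0,1]^n$. Propositions \ref{canone} and \ref{diam} give a uniform Lipschitz constant $1+\beta(\sigma)$ on domains of length at most $\sqrt n$, so Arzelà--Ascoli produces a limit $\gamma_\infty$, again a canonically parametrized $C(e,\sigma)$-curve (in the closed cone, which is enough). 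If $\gamma_\infty(t)\notin E$, then $\gamma_k(t)\notin B_{\delta_k}(E)$ for $k$ large, so $\mathbf{1}_{B_{\delta_k}(E)}(\gamma_k(t))\to 0$ pointwise on $\{t:\gamma_\infty(t)\notin E\}$. The reverse Fatou lemma applied to the uniformly bounded integrands $|\gamma_k'(t)|\mathbf{1}_{B_{\delta_k}(E)}(\gamma_k(t))$ yields
\begin{equation}
\limsup_{k}\mathcal{H}^1(\gamma_k(I_k)\cap B_{\delta_k}(E))\leq(1+\beta(\sigma))\,\mathcal{L}^1\bigl(\{t:\gamma_\infty(t)\in E\}\bigr).\nonumber
\end{equation}
A canonical parametrization has tangent of modulus at least $1$, so the right-hand side is bounded by $(1+\beta(\sigma))\mathcal{H}^1(\gamma_\infty(\tilde I)\cap E)$, which equals zero by Proposition \ref{equiv} and the pure unrectifiability of $E$. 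This contradicts $\limsup\geq\epsilon$.

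With $\delta$ fixed as above, (i)--(iv) are established by moving competitors in $\Gamma_{e,\sigma}^x$. For (i), the lower bound is obtained by a degenerate curve at $x$, while the upper bound follows since every competitor contributes at most $\mathcal{H}^1(B_\delta(E)\cap\gamma(I))\leq\epsilon$. For the monotonicity in (ii), appending the forward segment $[x_\gamma,x_\gamma+se]$, itself a $C(e,\sigma)$-curve, to a competitor for $x$ produces a competitor for $x+se$ with unchanged endpoint distance, which gives the left inequality; the right inequality is symmetric, because a competitor for $x+se$ is automatically a competitor for $x$ and its endpoint distance grows by exactly $s$. The case $[x,x+se]\subset B_\delta(E)$ realises equality, as the appended segment lies entirely inside $B_\delta(E)$ and contributes its full length $s$ to the $\mathcal{H}^1$ integral. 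Property (iii) requires deforming a competitor $\gamma$ for $x$ into one for $x+v$ by perturbing its canonical parametrization through $\eta(s)\mapsto\eta(s)+sv/T$ (with $T$ the length of the parametrization interval); this is still a canonical $C(e,\sigma')$-curve for a cone slightly larger than $C(e,\sigma)$, and the change in $\mathcal{H}^1(B_\delta(E)\cap\gamma(I))-|x-x_\gamma|$ is controlled by $\beta(\sigma)|v|$ through Proposition \ref{canone}(i). Finally, (iv) combines (ii) and (iii) by the triangle inequality after decomposing any displacement $x'-x$ into its $e$-component and its $e^\perp$-component.

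The genuinely delicate step is the compactness argument for the quantitative estimate: one must verify that the limit curve remains a $C(e,\sigma)$-curve (handled by passing to the closed cone) and that the intersection measure with $B_{\delta_k}(E)$ does not jump up in the limit (handled by reverse Fatou, where the compactness of $E$ is crucial so that $B_{\delta_k}(E)\searrow E$). The subsequent manipulation of competitors for (i)--(iv) is standard bookkeeping in the spirit of \cite{AlbertiMarchese}.
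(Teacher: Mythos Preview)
The paper does not give its own proof of this proposition: it is simply quoted as Proposition 7.4 of \cite{AlbertiMarchese}. So there is no ``paper's proof'' to compare against beyond the citation.

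Your compactness/reverse-Fatou argument for the quantitative bound $\mathcal{H}^1(\gamma(I)\cap B_\delta(E))<\epsilon$ is sound, and (i), the two inequalities in (ii), and the deduction of (iv) from (ii)+(iii) are fine. The equality case in (ii) needs a small repair: the segment you append is $[x_\gamma,x_\gamma+se]=[x+s'e,x+(s'+s)e]$, which is not contained in $[x,x+se]$ unless $s'=0$. The fix is to split into $s'\leq s$ (append $[x+s'e,x+se]\subset[x,x+se]\subset B_\delta(E)$, landing at $x+se$) and $s'>s$ (keep $\gamma$ unchanged as a competitor for $x+se$); in both cases the value increases by exactly $s$.

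The real gap is (iii). Your shear $\eta(s)\mapsto\eta(s)+sv/T$ produces a curve whose transverse Lipschitz constant is $\beta(\sigma)+|v|/T$, so it lies only in a strictly larger cone $C(e,\sigma')$ and is \emph{not} an admissible competitor for $w_{C(e,\sigma)}[B_\delta(E)](x+v)$; saying ``which is enough'' does not address this. Moreover, your appeal to Proposition \ref{canone}(i) does not control the change in $\mathcal{H}^1(B_\delta(E)\cap\gamma(I))$: two curves that are uniformly $|v|$-close can have intersections with a fixed open set differing by an arbitrary amount. In fact the naive ``extend by a cone-segment to reach the ray $x+v+\mathbb R_{\ge0}e$'' argument only yields a Lipschitz constant of order $|v|/\beta(\sigma)$ in the $e^\perp$ direction, which is the wrong power of $\beta(\sigma)$; the bound $\beta(\sigma)|v|$ asserted in (iii) is a genuinely sharper statement and does \emph{not} follow from generic competitor manipulations. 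This is precisely the point where the proof in \cite{AlbertiMarchese} does real work, and you should either reproduce that argument or cite it rather than labeling it ``standard bookkeeping''.
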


In the following  $E\Subset (0,1)^n$ will always be a fixed compact purely unrectifiable set, $\epsilon$ and $\sigma$ two fixed positive numbers in $(0,1)$ and $\mathcal{E}:=\{e_1,\ldots,e_n\}$ an orthonormal basis of $\R^n$.

\begin{definizione}\label{defi}
Suppose $\delta_i$ with $i=1,\ldots,n$ are the numbers given by Proposition \ref{alb} for which \eqref{eq:2:2} holds for any $C(e_i,\sigma)$-curve $\gamma$. we define:
\begin{itemize}
\item[(a)]$\delta_{\mathcal{E}}:=\min\{\delta_1,\ldots,\delta_n,\epsilon\}/16$,
\item[(b)]$g_i(x):=w_{C(e_i,\sigma)}(B_{\delta_{\mathcal{E}}}(E))(x)/(1+\beta(\sigma))$,
\item[(c)] $G(x)=G^{\mathcal{E},\epsilon,\sigma}(x):=(g_1(x),\ldots,g_n(x)).$
\end{itemize}
\end{definizione}

In the following proposition we show that the gradient of the map $G$ introduced above on the set $E$ is very close to the identity matrix.

\begin{proposizione}
Let $G:\R^n\to\R^n$ be the function defined in point $(c)$ of Definition \ref{defi}. Then:
\begin{itemize}
\item[(i)]$\lVert G(x)\rVert_\infty\leq\sqrt{n}\epsilon/(1+\beta(\sigma))$
for every $x\in\R^n$.
\item[(ii)]For every $x\in B_{\delta_{\mathcal{E}}}(E)$ there exist $t^*=t^*(x)>0$ such that:
\begin{equation}
\left\lvert \frac{G(x+tv)-G(x)}{t}-v\right\rvert\leq 2(n+1)^\frac{1}{2}\frac{\beta(\sigma)}{1+\beta(\sigma)}\nonumber
\end{equation}
for all $t\in(-t^*,t^*)\setminus \{0\}$ and every $v\in\mathbb{S}^{n-1}$. Moreover if $x\in E$ then $t^*=\delta_{\mathcal{E}}/2$.
\end{itemize}
\label{ide}
\end{proposizione}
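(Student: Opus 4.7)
My plan is to read both parts directly from the properties of the width function listed in Proposition~$\ref{alb}$, with the work concentrated in a short ``corner'' argument for (ii).

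For (i), the choice $\delta_{\mathcal E}=\min\{\delta_1,\dots,\delta_n,\epsilon\}/16$ gives $B_{\delta_{\mathcal E}}(E)\subseteq B_{\delta_i}(E)$ for every $i$. Since $w_{C(e_i,\sigma)}[\,\cdot\,](x)$ is monotone in its set argument (immediate from its definition as a supremum of $\mathcal H^1$--masses of intersections with $C(e_i,\sigma)$--curves), Proposition~$\ref{alb}$(i) applied in each coordinate direction yields $0\le g_i(x)\le \epsilon/(1+\beta(\sigma))$ for every $x\in\R^n$, and consequently
$$|G(x)|=\Big(\sum_{i=1}^n g_i(x)^2\Big)^{1/2}\le\frac{\sqrt n\,\epsilon}{1+\beta(\sigma)}.$$

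For (ii), write $w_i:=w_{C(e_i,\sigma)}[B_{\delta_{\mathcal E}}(E)]$, so that $g_i=w_i/(1+\beta(\sigma))$. Given $x\in B_{\delta_{\mathcal E}}(E)$ and $v\in\mathbb S^{n-1}$, I decompose $v=(v\cdot e_i)e_i+v^{\perp,i}$ with $v^{\perp,i}\in e_i^\perp$, and choose $t^*>0$ small enough that, for every $t\in(-t^*,t^*)$ and every $i$, the corner path
$$x\;\longrightarrow\; x+t(v\cdot e_i)e_i\;\longrightarrow\; x+tv$$
lies inside $B_{\delta_{\mathcal E}}(E)$. Since every point of this path is within distance $|t|$ of $x$, the choice $t^*=\delta_{\mathcal E}/2$ is admissible whenever $x\in E$. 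The core estimate I aim to prove is
$$|w_i(x+tv)-w_i(x)-t(v\cdot e_i)|\le |t|\beta(\sigma)|v^{\perp,i}|.$$
For the $e_i$--segment $[x,x+t(v\cdot e_i)e_i]$, applying Proposition~$\ref{alb}$(ii) (oriented appropriately according to the sign of $t(v\cdot e_i)$) gives the exact equality $w_i(x+t(v\cdot e_i)e_i)-w_i(x)=t(v\cdot e_i)$; for the perpendicular leg, Proposition~$\ref{alb}$(iii) bounds $|w_i(x+tv)-w_i(x+t(v\cdot e_i)e_i)|$ by $\beta(\sigma)|tv^{\perp,i}|$; the triangle inequality concludes.

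Dividing by $t(1+\beta(\sigma))$ and rearranging gives, for each $i$,
$$\Big|\frac{g_i(x+tv)-g_i(x)}{t}-(v\cdot e_i)\Big|\le \frac{\beta(\sigma)\bigl(|v\cdot e_i|+|v^{\perp,i}|\bigr)}{1+\beta(\sigma)}.$$
Using $(a+b)^2\le 2(a^2+b^2)$ together with $(v\cdot e_i)^2+|v^{\perp,i}|^2=1$, and summing over $i=1,\dots,n$,
$$\Big|\frac{G(x+tv)-G(x)}{t}-v\Big|^2\le \frac{2n\,\beta(\sigma)^2}{(1+\beta(\sigma))^2},$$
so the bound claimed in (ii) follows from $\sqrt{2n}\le 2\sqrt{n+1}$. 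The only real obstacle is organising the sign analysis in the core estimate and checking that the corner paths stay inside $B_{\delta_{\mathcal E}}(E)$ for the declared $t^*$; everything else is a cosmetic rearrangement of the estimates already packaged inside Proposition~$\ref{alb}$.
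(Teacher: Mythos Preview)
Your proof is correct and follows essentially the same route as the paper's: both parts are read off Proposition~\ref{alb}, with (ii) obtained by the corner decomposition $x\to x+t(v\cdot e_i)e_i\to x+tv$ combined with parts (ii) and (iii) of that proposition. The paper is terser (it states the bound $|w_i(x+tv)-w_i(x)-tv_i|\le\beta(\sigma)|t|$ without spelling out the path), and by dropping $|v^{\perp,i}|\le 1$ at that stage it lands on $\sqrt{2(n+1)}$ rather than your slightly sharper $\sqrt{2n}$; both are then bounded by $2\sqrt{n+1}$.
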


\begin{proof}
The estimate on the supremum norm of $G$ follows directly from Proposition \ref{alb}(i). In order to prove point (ii), we note that for any $x\in B_{\delta_{\mathcal{E}}}(E)$ we can find $t^*=t^*(x)>0$ such that $B_{t^*}(x)\subseteq B_{\delta_{\mathcal{E}}}(E)$. Thus Propostion \ref{alb}(ii) and (iii) imply that:
\begin{equation}
\begin{split}
&\lvert w_{C(e_i,\sigma)}(B_{\delta_{\mathcal{E}}}(E))(x+tv)-w_{C(e_i,\sigma)}(B_{\delta_{\mathcal{E}}}(E))(x)-tv_i\rvert\leq \beta(\sigma)\lvert t\rvert,
\nonumber
\end{split}
\end{equation}
for any $t\in(-t^*,t^*)$. Therefore, thanks to some algebraic computations, which we omit, we get:
\begin{equation}
\begin{split}
\left\lvert \frac{G(x+tv)-G(x)}{t}-v\right\rvert^2=&\frac{1}{t^2}\sum_{i=1}^n\left(\frac{w_{C(e_i,\sigma)}(B_{\delta_{\mathcal{E}}}(E))(x+tv)-w_{C(e_i,\sigma)}(B_{\delta_{\mathcal{E}}}(E))(x)}{1+\beta(\sigma)}-tv_i\right)^2\\
\leq&\frac{2}{t^2}\sum_{i=1}^n\left(\frac{\beta(\sigma)\lvert t\rvert}{1+\beta(\sigma)}\right)^2+\left(\frac{tv_i\beta(\sigma)}{1+\beta(\sigma)}\right)^2=2(n+1)\left(\frac{\beta(\sigma)}{1+\beta(\sigma)}\right)^2.
\end{split}\nonumber
\end{equation}
\end{proof}

Fix an open set $\Omega$ and convolve $\chi_{\Omega}$ with a smooth kernel. It is possible to construct for any $\varsigma>0$ a smooth function $\psi^\varsigma_{\Omega}:\R^n\to\R$ such that $\psi^\varsigma_{\Omega}(x)=1$ for every $x\in\Omega$, $\psi^\varsigma_{\Omega}(x)=0$ for every $x\in B_{\varsigma/2}(\Omega)^c$ and: 
\begin{equation}
    \text{Lip}(\psi^\varsigma_{\Omega})\leq 256\alpha(n)/\varsigma,
    \label{smoo}
\end{equation}
where $\alpha(n)$ is the volume of the unit ball in $\R^n$.
For any $v\in\mathbb{S}^{n-1}$ and $\lambda\in(0,1)$, we define:
\begin{equation}
H_v(x):=\frac{\psi_{B_\theta(E)}^{\delta_{\mathcal{E}}}(x)}{1+\beta(\sigma)}w_{C(v,\sigma)}\left[B_\theta(E)\right](x)\label{numbero1}
,
\end{equation}
where $\psi_{B_\theta(E)}^{\delta_{\mathcal{E}}}$ is the cutoff function described above and the parameter $\theta\in(0,\lambda\delta_{\mathcal{E}}/2)$ is chosen in such a way that:
\begin{equation}
w_{C(v,\sigma)}\left[B_\theta(E)\right](x)\leq \lambda\delta_{\mathcal{E}}/2.
\label{localoca}
\end{equation}

\begin{definizione}
Assume that $\epsilon,\sigma,\lambda,\theta$ and $v$ are as above and $f\in\mathcal{C}^\infty(\R^n,\R^m)$. For any $\eta\in(0,1/20)$ and $u\in\mathbb{S}^{m-1}$ we define the roughed function $F_u^\eta$ of $f$ as:
\begin{equation}
F_u^\eta(x):=(1-\eta)\left(f(x)-D f(x)[G(x)]+\frac{D^2 f(x)[G(x),G(x)]}{2}+H_{v}(x)u\right),\nonumber
\end{equation}
where $G$ is the map introduced in Definition \ref{defi}(c) and $H_v$ the function defined in \ref{numbero1}.

Note that the function $F^\eta_u$ depends on the parameters $\eta,\epsilon,\sigma,\lambda,\theta$ and on the directions $v\in\mathbb{S}^{n-1}$ and $u\in\mathbb{S}^{m-1}$, but for the sake of readability we have chosen this simplified notation.
\label{rough}
\end{definizione}

 We state here without proof the following lemma, which is a consequence of Lebesgue's differentiation theorem and which will be used in the proof of Proposition \ref{lung}. It states that the boundary of any neighbourhood of a compact set is $\sigma$-porous, and it will be used in the proof of Proposition \ref{lung}.

\begin{lemma}\label{surf}
Let $K\subseteq \R^n$ be a compact set and $\delta\in(0,1)$. For any $x\in\partial B_\delta(K)$ and any $\tau\in (0,1)$ there exist $x(\tau)\in(\partial B_\delta(K))^c$ such that $\lvert x(\tau)-x\rvert=\tau\delta$ and:
$$B_{\tau\delta/4}(x(\tau))\subset (\partial B_\tau(K))^c\cap B_{5\tau\delta/4}(x).$$
In particular $\mathcal{L}^n(\partial B_\delta(K))=0$.
\end{lemma}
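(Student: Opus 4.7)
The plan is to pick $x(\tau)$ by moving from $x$ slightly inward toward a nearest point of $K$. Since $K$ is compact, for $x \in \partial B_\delta(K)$ (which, by openness of $B_\delta(K) = \{y : d(y,K) < \delta\}$, means exactly $d(x,K) = \delta$) there exists $k \in K$ with $|x - k| = \delta$. Define
\begin{equation}
x(\tau) := x - \tau(x - k) = k + (1-\tau)(x-k). \nonumber
\end{equation}
Then $|x(\tau) - x| = \tau|x - k| = \tau\delta$, so the distance condition is automatic.

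Next I would verify the two inclusions in the displayed formula by two applications of the triangle inequality. For any $y \in B_{\tau\delta/4}(x(\tau))$ one has $|y - k| \leq |y - x(\tau)| + |x(\tau) - k| < \tau\delta/4 + (1-\tau)\delta = \delta - 3\tau\delta/4$, hence $d(y,K) \leq |y - k| < \delta$, so $y \in B_\delta(K) \subseteq (\partial B_\delta(K))^c$. Simultaneously $|y - x| \leq |y - x(\tau)| + |x(\tau) - x| < \tau\delta/4 + \tau\delta = 5\tau\delta/4$, giving $y \in B_{5\tau\delta/4}(x)$. This proves the first assertion, and in particular $x(\tau) \in (\partial B_\delta(K))^c$.

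For the measure statement I would invoke Lebesgue's differentiation theorem, as suggested by the hint. The porosity estimate just proved says that for every $x \in \partial B_\delta(K)$ and every $\tau \in (0,1)$, the ball $B_{5\tau\delta/4}(x)$ contains a ball of radius $\tau\delta/4$ entirely disjoint from $\partial B_\delta(K)$. Consequently, writing $A := \partial B_\delta(K)$ and setting $r = 5\tau\delta/4$, a fixed positive fraction $c_n := \alpha(n)(1/5)^n$ of the measure of $B_r(x)$ lies outside $A$, so
\begin{equation}
\limsup_{r \to 0^+} \frac{\mathcal{L}^n(A \cap B_r(x))}{\mathcal{L}^n(B_r(x))} \leq 1 - c_n < 1 \nonumber
\end{equation}
for every $x \in A$. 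Hence no point of $A$ is a Lebesgue density point of $A$, and Lebesgue's differentiation theorem forces $\mathcal{L}^n(A) = 0$.

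The only nontrivial step is the triangle-inequality check that nearest-point projections behave well, and there is no real obstacle once the correct construction of $x(\tau)$ (namely, sliding a small distance $\tau\delta$ from $x$ toward the witnessing point $k \in K$) is identified; compactness of $K$ is essential only to ensure that such a nearest point $k$ exists.
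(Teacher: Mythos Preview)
Your proof is correct and follows precisely the route the paper indicates (the lemma is stated there without proof, with only the hint that it follows from porosity and Lebesgue's differentiation theorem). One trivial slip: the density fraction should be $c_n=(1/5)^n$, not $\alpha(n)(1/5)^n$, since the $\alpha(n)$'s cancel in the ratio of ball volumes; this does not affect the argument.
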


\begin{proposizione}\label{lung}
Let $f\in\mathcal{C}^\infty(\R^n,\R^m)$ be a $1$-Lipschitz map. Then:
$$\lVert f-F_u^\eta\rVert_\infty\leq \eta\lVert f\rVert_\infty+(1-\eta) (n^\frac{1}{2}\epsilon+\lVert D^2 f\rVert_\infty n\epsilon^2+\lambda\epsilon).$$
Moreover, if we assume that: 
\begin{equation}
\sigma+\epsilon+\lambda<\min\bigg\{\frac{\eta^2}{2^{10}(n+1)^4},\frac{\eta}{2^{20}\alpha(n)},\frac{\eta}{2^{10}n\lVert D^2 f\rVert_\infty},\frac{\eta^\frac{1}{2}}{2^{10}n(1+\lVert D^3 f\rVert_\infty)}\bigg\},
\label{numbero2}
\end{equation}
we have:
\begin{itemize}
    \item[(i)]$\lVert D f_u^\eta(x)\rVert<(1-\eta^2)$, 
for  $\mathcal{L}^n$-almost every $x\in[0,1]^n$. In particular $F_u^\eta$ is a $1$-Lipschitz function.
\item[(ii)]There exists $0<h^*<\eta$ depending only on $\eta$, $f$ and $E$, such that:
\begin{equation}
\left\lvert\frac{F_u^\eta(x+hv)-F_u^\eta(x)}{h}-u \right\rvert\leq 10\eta-h^*,\label{numbero2001}
\end{equation}
for any $h\in (-2h^*,-h^*)\cup(h^*,2h^*)$ and any $x\in E$.
\end{itemize}
\end{proposizione}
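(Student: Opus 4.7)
My approach handles the three displayed assertions separately. The supremum bound is a term-by-term comparison; part~(i) is a pointwise estimate on $\lVert DF_u^\eta\rVert_{\mathrm{op}}$ split into two complementary regions; part~(ii) is a second-order Taylor expansion at $x\in E$ in which most contributions cancel. For the supremum bound I would rearrange $F_u^\eta-f=-\eta f+(1-\eta)\bigl(-Df[G]+\tfrac12 D^2f[G,G]+H_vu\bigr)$. Since $f$ is $1$-Lipschitz, $|Df[G]|\le|G|\le\sqrt n\,\epsilon$ by Proposition~\ref{ide}(i); the Hessian term is bounded by $\lVert D^2f\rVert_\infty n\epsilon^2$; and \eqref{localoca} together with $\delta_{\mathcal{E}}\le\epsilon$ gives $|H_v(x)|\le\lambda\epsilon$. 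Collecting these reproduces the announced inequality.

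For part~(i), set $A=f-Df[G]+\tfrac12 D^2f[G,G]+H_vu$, so $F_u^\eta=(1-\eta)A$. Rademacher plus the chain rule gives $DA(x)[w]=Df(x)[(I-DG(x))w]+D^2f(x)[(DG(x)-I)w,G(x)]+\tfrac12 D^3f(x)[w,G(x),G(x)]+DH_v(x)[w]\,u$ at a.e.\ $x$. I would estimate its operator norm in two complementary regions. Outside $B_{\delta_{\mathcal{E}}}(E)$ the cutoff $\psi$ vanishes near $x$ (since $\mathrm{supp}\,\psi\subseteq B_{\delta_{\mathcal{E}}/2+\theta}(E)\subseteq B_{\delta_{\mathcal{E}}}(E)$), so the last summand is zero; the crude global bound $\lVert I-DG\rVert\le 1+n\beta(\sigma)/(1+\beta(\sigma))$ that follows from Proposition~\ref{alb}(ii)--(iii) (diagonal entries of $DG$ in $[0,1/(1+\beta(\sigma))]$, off-diagonal entries bounded by $\beta(\sigma)/(1+\beta(\sigma))$) suffices. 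Inside $B_{\delta_{\mathcal{E}}}(E)$, Proposition~\ref{ide}(ii) forces $\lVert I-DG(x)\rVert\le 2\sqrt{n+1}\,\beta(\sigma)/(1+\beta(\sigma))$ at every differentiability point, shrinking the first three terms; meanwhile $|DH_v[w]|\le 1+128\alpha(n)\lambda$ is controlled via \eqref{smoo}, \eqref{localoca}, and Proposition~\ref{alb}(iv). The four clauses of hypothesis~\eqref{numbero2} are calibrated exactly so that $n\beta(\sigma)$, $128\alpha(n)\lambda$, $\lVert D^2f\rVert n\epsilon$, and $\lVert D^3f\rVert n\epsilon^2$ are each much smaller than $\eta$; in both regions one obtains $\lVert DA\rVert\le 1+\eta$, and multiplying by $(1-\eta)$ yields $\lVert DF_u^\eta\rVert<1-\eta^2$.

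For part~(ii), fix $x\in E$ and $|h|<\min(\theta,\delta_{\mathcal{E}}/2)$. I would Taylor-expand $f,Df,D^2f$ at $x$ with cubic remainders bounded by $\lVert D^3f\rVert_\infty$; Proposition~\ref{ide}(ii), which on $E$ holds uniformly with $t^*=\delta_{\mathcal{E}}/2$, writes $G(x+hv)=G(x)+hv+\xi(h)$ with $|\xi(h)|\le 2\sqrt{n+1}\,\beta(\sigma)|h|/(1+\beta(\sigma))$; and since $[x,x+hv]\subseteq B_\theta(E)$ with $\psi\equiv 1$ on $B_\theta(E)$, Proposition~\ref{alb}(ii) (equality case) gives $H_v(x+hv)-H_v(x)=h/(1+\beta(\sigma))$. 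Substituting into $A(x+hv)-A(x)$, the $hDf(x)[v]$ contributions cancel between $f$ and $-Df[G]$, and the mixed $hD^2f(x)[v,G(x)]$ contributions cancel between $-Df[G]$ and $\tfrac12 D^2f[G,G]$. Dividing by $h$ leaves $u/(1+\beta(\sigma))+R(h)$, where $R(h)$ collects $Df[\xi(h)]/h$, a surviving $-\tfrac12 h D^2f(x)[v,v]$, and higher-order Taylor remainders involving $\lVert D^2f\rVert$ and $\lVert D^3f\rVert$. Multiplying by $1-\eta$ and subtracting $u$ yields $\bigl|\tfrac{F_u^\eta(x+hv)-F_u^\eta(x)}{h}-u\bigr|\le\tfrac{\eta+\beta(\sigma)}{1+\beta(\sigma)}+(1-\eta)|R(h)|$; hypothesis~\eqref{numbero2} makes $\beta(\sigma)$ and the $h$-independent part of $R(h)$ much smaller than $\eta$, and choosing $h^*\in(0,\eta)$ depending on $\eta,f,E$ small enough that the $h$-dependent part of $R(h)$ is $\le h^*$ whenever $|h|\le 2h^*$ produces the bound $10\eta-h^*$.

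The main obstacle is the balance in part~(i): the a priori large quantities $\lVert Df\circ(I-DG)\rVert$ and $|DH_v|$ must be tamed simultaneously, which is possible only because the former is small precisely on the support of the latter; the four clauses of \eqref{numbero2} encode the exact trade-off, and the fourth clause in particular guarantees the Taylor remainders in part~(ii) can be absorbed into the slack $h^*$.
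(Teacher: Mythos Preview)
Your proposal is correct and follows essentially the same three-part scheme as the paper: triangle inequality for the supremum bound, the inside/outside $B_{\delta_{\mathcal E}}(E)$ split for part~(i), and the Taylor expansion with cancellations for part~(ii). One small bookkeeping slip: in part~(ii) the three $D^2f(x)[v,v]$ contributions (from $f$, from $-Df[G]$, and from $\tfrac12 D^2f[G,G]$) have coefficients $\tfrac{h}{2},-h,\tfrac{h}{2}$ after dividing by $h$ and hence cancel completely, so there is no ``surviving $-\tfrac12 h\,D^2f(x)[v,v]$''; this only helps you, but note that as you wrote it, forcing an $O(|h|\lVert D^2f\rVert_\infty)$ term to be $\le h^*$ would fail when $\lVert D^2f\rVert_\infty>1$---the paper instead absorbs all $h$-dependent remainders into the $10\eta$ budget (bounding them by $\eta$ for $h^*$ small) rather than into the $h^*$ slack.
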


\begin{proof} Applying the triangular inequality to the definition of $F_u^\eta$ we get:
\begin{equation}
\begin{split}
\lVert f-F_u^\eta\rVert_\infty\leq&\eta\lVert f\rVert_\infty
+(1-\eta)\Big(\lVert D f(\cdot)[G(\cdot)]\lVert_\infty+\lVert D^2 f(\cdot)[G(\cdot),G(\cdot)]\rVert_\infty+\rVert H_{v}(\cdot)u\rVert_\infty\Big).\label{numbero2002}
\end{split}
\end{equation}
Using the fact that $f$ is $1$-Lipschitz, the Cauchy-Schwartz inequality and the definition of $H_v$, we deduce that:
\begin{equation}
\begin{split}
\lVert D f(\cdot)[G(\cdot)]\rVert_\infty+&\lVert D^2 f(\cdot)[G(\cdot),G(\cdot)]\rVert_\infty+\lVert H_{v}(\cdot)u\rVert_\infty\\
\leq&\lVert G\rVert_\infty+\lVert D^2 f\rVert_\infty\lVert G\rVert_\infty^2+\frac{1}{1+\beta(\sigma)}\lVert \psi_{B_\theta(E)}^{\delta_{\mathcal{E}}}w_{C(e,\sigma)}\left[B_\theta(E)\right]\rVert_\infty\\
\leq&\lVert G\rVert_\infty+\lVert D^2 f\rVert_\infty\lVert G\rVert_\infty^2+\lVert w_{C(e,\sigma)}\left[B_\theta(E)\right]\rVert_\infty.
\nonumber
\end{split}
\end{equation}
Therefore, by \eqref{numbero2002}, Proposition \ref{ide} and the definition of $\theta$, see \eqref{localoca}, we have that:
$$\lVert f-F_u^\eta\rVert_\infty\leq \eta\lVert f\rVert_\infty+(1-\eta) (n^\frac{1}{2}\epsilon+\lVert D^2 f\rVert_\infty n\epsilon^2+\lambda\epsilon).$$

Assuming that \eqref{numbero2} holds, we prove now that the roughed function is $1$-Lipschitz. Let $B\subseteq(0,1)^n$ be the Borel set of full Lebesgue measure on which both $G$ and $H_{v}$ are differentiable and fix $x\in B$. The differential of $F_u^\eta$ at $x$ is given by the expression:
\begin{equation}
\begin{split}
\frac{D f_u^\eta(x)[\cdot]}{1-\eta}=
D f(x)[\cdot]-&D^2 f(x)[G(x),\cdot]-D f(x)D G(x)[\cdot]
+\frac{D^3 f(x)[G(x),G(x),\cdot]}{2}\\+&D^2 f(x)[D G(x)[\cdot],G(x)]+u\otimes \nabla H_{v}(x).
\end{split}
\nonumber
\end{equation}
Therefore the operatorial norm of $D f_u^\eta$ at $x$ can be bounded, using triangular inequality, by:
\begin{equation}
\begin{split}
\left\lVert \frac{D f_u^\eta(x)}{1-\eta}\right\rVert\leq\lVert D f(x)(\text{id}_{\R^n}-&D G(x))\rVert+\lVert D^2 f(x) \rVert\lVert G(x)\rVert
+\frac{\lVert D^3 f(x)\rVert \lVert G(x)\rVert^2}{2}\\+&\lVert D^2 f(x)\rVert \lVert D G(x)\rVert \lVert G(x)\rVert
+\lVert u \otimes \nabla H_{v}(x)\rVert.
\nonumber
\end{split}
\end{equation}
Proposition \ref{ide}(i), (ii) and the fact that $f$ and $G$ are $1$-Lipschitz imply that for every $x\in B$ we have:
\begin{equation}
\begin{split}
\left\lVert \frac{D f_u^\eta(x)}{1-\eta}\right\rVert\leq&\lVert \text{id}_{\R^n}-D G(x)\rVert+2\lVert D^2 f\rVert_\infty n^\frac{1}{2}\epsilon+\lVert D^3 f\rVert_\infty n\epsilon^2+\lVert u \otimes \nabla H_{v}(x)\rVert.
\label{numbero3}
\end{split}
\end{equation}
The only terms on the right-hand side of \eqref{numbero3} which need to be estimated are $\lVert \text{id}_{\R^n}-DG(x)\rVert$ and $\lVert u \otimes \nabla H_{v}(x)\rVert$.
Proposition \ref{surf} implies that $\mathcal{L}^n(\partial B_{2\delta_{\mathcal{E}}}(E))=0$, therefore without loss of generality we can assume either $x\in B\cap B_{2\delta_{\mathcal{E}}}(E)$ or $x\in B\cap (B_{\delta_{\mathcal{E}}}(E))^c\setminus \partial B_{\delta_{\mathcal{E}}}(E)$. We study $D f_u^\eta(x)$ in each of this cases.

Suppose first that $x\in B\cap B_{\delta_{\mathcal{E}}}(E)$. By Proposition \ref{ide} and the differentiability of $G$ at $x$ we have:
\begin{equation}
\begin{split}
&\lVert \text{id}_{\R^n}-D G(x)\rVert=\sup_{w\in\mathbb{S}^{n-1}}\lvert  w-D G(x)w\rvert
=\sup_{w\in\mathbb{S}^{n-1}}\lim_{t\to 0}\left\lvert  w-\frac{G(x+tw)-G(x)}{t}\right\rvert\leq\frac{(n+1)^\frac{1}{2}\beta(\sigma)}{1+\beta(\sigma)}.
\nonumber
\end{split}
\end{equation}
Thanks to the definition of $H_v$, we have that:
\begin{equation}
    u\otimes\nabla H_v=\frac{u\otimes\nabla\psi_{B_\theta(E)}^{\delta_{\mathcal{E}}}}{1+\beta(\sigma)}w_{C(v,\sigma)}\left[B_\theta(E)\right]+\frac{\psi_{B_\theta(E)}^{\delta_{\mathcal{E}}}}{1+\beta(\sigma)}u\otimes \nabla w_{C(v,\sigma)}\left[B_\theta(E)\right].
\label{labelotto}
\end{equation}
The estimate on $\lVert u\otimes \nabla H_v\rVert_\infty$ follows by \eqref{smoo}, \eqref{labelotto} and Proposition \ref{alb}, indeed:
\begin{equation}
\left\lVert\frac{ u\otimes \nabla\psi_{B_\theta(E)}^{\delta_{\mathcal{E}}}(x)}{1+\beta(\sigma)}w_{C(v,\sigma)}\left[B_\theta(E)\right](x)\right\rVert\leq\frac{\lambda\delta_{\mathcal{E}}}{1+\beta(\sigma)}\frac{256\alpha(n)}{\delta_{\mathcal{E}}}=\frac{256\alpha(n)\lambda}{1+\beta(\sigma)},
\nonumber
\end{equation}
and:
\begin{equation}
\left\lVert\frac{\psi_{B_\theta(E)}^{\delta_{\mathcal{E}}}(x)}{1+\beta(\sigma)}u\otimes \nabla w_{C(v,\sigma)}\left[B_\theta(E)\right](x)\right\rVert\leq 1\nonumber.
\end{equation}
Summing up, if $x\in B\cap B_{2\delta_{\mathcal{E}}}(E)$:
\begin{equation}
\begin{split}
    \lVert D f_u^\eta(x)\rVert\leq &(1-\eta)(2\lVert D^2 f\rVert_\infty n^\frac{1}{2}\epsilon
+\lVert D^3 f\rVert_\infty n\epsilon^2
+(n+1)^\frac{1}{2}\beta(\sigma)+256\alpha(n)\lambda+1)<(1-\eta^2),
\nonumber
\end{split}
\end{equation}
where the last inequality comes from the condition on $\epsilon+\sigma+\lambda$ and some algebraic computations that we omit.

On the other hand, if $x\in B\cap (B_{2\delta_{\mathcal{E}}}(E))^c\setminus \partial B_{2\delta_{\mathcal{E}}}(E)$, we know that $\psi_{B_\theta(E)}^{\delta_{\mathcal{E}}}=0$, and thus the bound for $\lVert D f_u^\eta(x)\rVert$ boils down to:
\begin{equation}
\begin{split}
\left\lVert \frac{D f_u^\eta(x)}{1-\eta}\right\rVert\leq&\lVert \text{id}_{\R^n}-D G(x)\rVert+\lVert D^2 f\rVert_\infty n^\frac{1}{2}\epsilon+\lVert D^3 f\rVert_\infty n\epsilon^2+\lVert D^2 f\rVert_\infty n^\frac{1}{2}\epsilon.
\nonumber
\end{split}
\end{equation}
In this case, we need a careful study of the quantity $\lVert \text{id}_{\R^n}-D G(x)\rVert$ since the bound yielded by Proposition \ref{ide}(ii) could be no longer true. On the other hand know that:
\begin{equation}
\begin{split}
\lVert D G(x)-\text{id}\rVert\leq\sup_{\substack{a\in\mathbb{S}^{n-1}\\b\in\mathbb{S}^{n-1}}}\sum_{i,j}^n \lvert b_i\rvert\lvert a_j\rvert \left\lvert\left\langle e_i, (D G(x)-\text{id})e_j\right\rangle\right\rvert,\nonumber
\end{split}
\end{equation}
and thus we are reduced to study the quantity $\lvert\left\langle e_i, (D G(x)-\text{id})e_j\right\rangle\rvert$.
If $i\neq j$, by Proposition \ref{alb}(iii) we have:
\begin{equation}
\left\lvert\left\langle e_i, (D G(x)-\text{id})e_j\right\rangle\right\rvert=\left\lvert\frac{\partial_jg_i(x)}{1+\beta(\sigma)}\right\rvert\leq\frac{\beta(\sigma)}{1+\beta(\sigma)}.\nonumber
\end{equation}
On the other hand if $i=j$, by definition of $G$, we have:
\begin{equation}
\lvert\left\langle e_i, (D G(x)-\text{id})e_i\right\rangle\rvert=\left\lvert\frac{\partial_ig_i(x)}{1+\beta(\sigma)}-1\right\rvert.\nonumber
\end{equation}
Therefore, the fact that $0\leq\partial_ig_i(x)\leq1$ (due to Proposition \ref{alb}(iv)) implies that $\lvert\left\langle e_i, (D G(x)-\text{id})e_i\right\rangle\rvert\leq 1$ and thus using Cauchy-Schwartz inequality, we conclude that:
\begin{equation}
\begin{split}
\lVert D G(x)-\text{id}\rVert\leq&\sup_{a\in\mathbb{S}^{n-1}}\sup_{b\in\mathbb{S}^{n-1}}\beta(\sigma)\sum_{i\neq j}^n \lvert b_i\rvert \lvert a_j\rvert +\sum_{i=1}^n\lvert a_i\rvert \lvert b_i\rvert\\
\leq& n^2 \beta(\sigma)+\sup_{a\in\mathbb{S}^{n-1}}\sup_{b\in\mathbb{S}^{n-1}} \left(\sum_{i=1}^n a_i^2\right)^\frac{1}{2}\left(\sum_{i=1}^n b_i^2\right)^\frac{1}{2}\leq n^2\beta(\sigma)+1.
\nonumber
\end{split}
\end{equation}
Summing up, our computations yield the bound:
\begin{equation}
\begin{split}
\left\lVert D f_u^\eta(x)\right\rVert\leq&(1-\eta)(1+n^2\beta(\sigma)+2\lVert D^2 f\rVert_\infty n^\frac{1}{2}\epsilon+\lVert D^3 f\rVert_\infty n\epsilon^2)<(1-\eta^2),
\nonumber
\end{split}
\end{equation}
where the last inequality comes from the condition on $\epsilon+\sigma+\lambda$ and some omitted algebraic computations.

We are left to prove point (iii). To do so, fix some $x\in E$ and let $h\in\left(-\theta/2,\theta/2\right)\setminus \{0\}$.
Since $f$ is smooth, we have the following Taylor expansions:

\begin{itemize}
    \item[(a)] $f(x+hv)=f(x)+D f(x)[hv]+\frac{D^2 f(x)[hv,hv]}{2}+r(x,hv)$,
    \item[(b)] $D f(x+hv)[\cdot]=D f(x)[\cdot]+D^2 f(x)[hv,\cdot]+R(x,hv)$,
    \item[(c)] $D^2 f(x+hv)[\cdot,\cdot]=D^2 f(x)[\cdot,\cdot]+D^3 f[hv,\cdot,\cdot]+\mathcal{R}(x,hv),$
\end{itemize}
where $\lvert r(x,hv)\rvert=O(\lvert h\rvert^3)$, $\lVert R(x,hv)\rVert= O(\lvert h\rvert^2)$ and $\lVert\mathcal{R}(x,hv)\rVert=O(\lvert h\rvert^2)$. Furthermore, for any $\rho>0$ the functions $\frac{r(\cdot,hv)}{h^2}$, $\frac{R(\cdot,hv)}{h}$ and $\frac{\mathcal{R}(\cdot,hv)}{h}$ converge uniformly in $x$ to $0$ on $B_\rho(0)$. To prove \eqref{numbero2001} we must study the difference quotient along $v$ of $F_u^\eta$ at points of $E$:
\begin{equation}
\begin{split}
&\frac{F_u^\eta(x+hv)-F_u^\eta(x)}{(1-\eta)h}=\underbrace{\frac{f(x+hv)-f(x)}{h}}_{\text{(I)}}
-\underbrace{\frac{D f(x+hv)[G(x+hv)]-D f(x)[G(x)]}{h}}_{\text{(II)}}\\
+&\underbrace{\frac{D^2 f(x+hv)[G(x+hv),G(x+hv)]-D^2 f(x)[G(x),G(x)]}{2h}}_{\text{(III)}}+\underbrace{\frac{(H_{v}(x+hv)-H_{v}(x))u}{h}}_{\text{(IV)}}.\label{abc}
\end{split}
\end{equation}
We are going to study each term of the right-hand side of the identity \eqref{abc} separately. 
Thanks to the smoothness of $f$ and its Taylor expansions, (I) turns into:
\begin{equation}
    \text{(I)}=\frac{f(x+hv)-f(x)}{h}=D f(x)[v]+\frac{D^2 f(x)[v,v]}{2}h+\frac{r(x,hv)}{h}.
    \label{numbero2003}
\end{equation}
Recall that $h\in\left(-\theta/2,\theta/2\right)\setminus \{0\}$ and thus $x,x+hv\in B_\theta(E)$. Hence thanks to Proposition \ref{alb} and th bound \eqref{smoo}, we have that:
\begin{equation}
\begin{split}
\text{(IV)}=H_{v}(x+hv)-H_{v}(x)=\frac{h}{1+\beta(\sigma)}.
\end{split}
\label{numbero2004}
\end{equation}
By Proposition \ref{ide} (ii), since $\lvert h\rvert<\theta/2$ we have that:
\begin{equation}
G(x+hv)=G(x)+hv+\Delta,
\label{GG}
\end{equation}
where $\lvert\Delta\rvert\leq 2(n+1)^\frac{1}{2}\frac{\beta(\sigma)\lvert h\rvert}{1+\beta(\sigma)}$.
Using the Taylor expansion (b) for $D f(x+hv)$ and the identity \eqref{GG}, we have:
\begin{equation}
\begin{split}
\text{(II)}=&\frac{D f(x)[G(x+hv)-G(x)]}{h}+D^2 f(x)[v, G(x+hv)]+\frac{R(x,hv)[G(x+hv)]}{h}\\
=&D f(x)[v]+\underbrace{\frac{D f(x)[\Delta]}{h}}_{\text{(IIa)}}+hD^2 f(x)[v, v]+D^2 f(x)[v,G(x)+\Delta]+\underbrace{\frac{R(x,hv)[G(x+hv)]}{h}}_{\text{(IIb)}}.
\label{numbero2005}
\end{split}
\end{equation}
Thanks to Proposition \ref{ide}, we have that:
\begin{equation}
   \lvert\text{(IIa)}\rvert\leq2(n+1)^\frac{1}{2}\beta(\sigma)\qquad\text{and}\qquad
\left\lVert\text{(IIb)}\right\rVert\leq\frac{\lVert R(x,hv)\rVert}{\lvert h\rvert}n^\frac{1}{2}\epsilon\label{numbero2006}.
\end{equation}
Eventually, (III) thanks to \eqref{GG} and the Taylor expansion (c) of $D^2 f(x+hv)$, becomes:
\begin{equation}
    \begin{split}
    \text{(III)}=\frac{1}{2}D^2 f(x)[v,v]h+&\underbrace{D^2 f(x)[v,G(x)+\Delta]}_{\text{(IIIa)}}+\underbrace{\frac{D^2 f(x)[G(x),\Delta]}{h}}_{\text{(IIIb)}}+\underbrace{\frac{D^2 f(x)[\Delta,\Delta]}{2h}}_{(IIIc)}\\
    +&\underbrace{\frac{D^3 f(x)[v,G(x+hv),G(x+hv)]}{2}}_{\text{(IIId)}}+\underbrace{\frac{\mathcal{R}(x,hv)[G(x+hv),G(x+hv)]}{2h}}_{\text{(IIIe)}}.
        \label{labelnove}
    \end{split}
\end{equation}
Again thanks to Proposition \ref{ide} we have the following estimates:
\begin{equation}
    \begin{split}
        &\lvert\text{(IIIa)}\rvert=\left\lvert D^2 f(x)[v,G(x)+\Delta]\right\rvert\leq \lVert D^2 f\rVert_\infty \left(n^\frac{1}{2}\epsilon+2(n+1)^\frac{1}{2}\beta(\sigma)\lvert h\rvert\right),\\
        &\lvert\text{(IIIb)}\rvert=\left\lvert D^2 f(x)[G(x),\Delta]\right\rvert/2\lvert h\rvert\leq(n+1)\lVert D^2 f(x)\rVert_\infty\beta(\sigma)\epsilon,\\
        &\lvert\text{(IIIc)}\rvert=\left\lvert D^2 f(x)[\Delta,\Delta]\right\rvert/2\lvert h\rvert\leq 2(n+1)\lVert D^2 f\rVert_\infty\beta(\sigma)^2\lvert h\rvert,\\
        &\lvert\text{(IIId)}\rvert=\left\lvert D^3 f(x)[v,G(x+hv),G(x+hv)]\right\rvert\leq 2\lVert D^3 f\rVert_\infty n\epsilon^2,\\
        &\lvert\text{(IIIe)}\rvert=\left\lvert\mathcal{R}(x,hv)[G(x+hv),G(x+hv)]\right\rvert\leq 2\left\lVert\mathcal{R}(x,hv)\right\rVert n\epsilon^2.
        \label{numbero4}
    \end{split}
\end{equation}
Thanks to the identities \eqref{numbero2003}, \eqref{numbero2004},\eqref{numbero2005} and \eqref{labelnove}, we have that \eqref{abc} simplifies to:
\begin{equation}
\begin{split}
\frac{F_u^\eta(x+hv)-F_u^\eta(x)}{(1-\eta)h}=&\frac{r(x,hv)}{h}-\frac{D f(x)[\Delta]}{h}-\frac{R(x,hv)[G(x+hv)]}{h}\\
+&\frac{D^3 f(x)[v,G(x+hv),G(x+hv)]}{2}+\frac{u}{1+\beta(\sigma)}\\
+&\frac{\mathcal{R}(x,hv)[G(x+hv),G(x+hv)]}{2h}
+\frac{D^2 f(x)[2G(x)+\Delta,\Delta]}{2h}.
\label{abc2}
\end{split}
\end{equation}
Furthermore, thanks to the estimates given in \eqref{numbero2006} and \eqref{numbero4} and identity \eqref{abc2} we deduce that:
\begin{equation}
\begin{split}
&\left\lvert\frac{F_u^\eta(x+hv)-F_u^\eta(x)}{h}-u \right\rvert\leq\frac{\lvert r(x,hv)\rvert}{\lvert h\rvert}+2(n+1)^\frac{1}{2}\beta(\sigma)+\frac{\lVert R(x,hv)\rVert}{\lvert h\rvert}n^\frac{1}{2}\epsilon\\
+&2(n+1)\lVert D^2 f\rVert_\infty(\beta(\sigma)^2\lvert h\rvert+\beta(\sigma)\epsilon)
+\frac{\left\lVert\mathcal{R}(x,hv)\right\rVert}{\lvert h\rvert} n\epsilon^2+\lvert \eta-\beta(\sigma)\rvert+\lVert D^3 f\rVert_\infty n\epsilon^2.
\label{label10}
\end{split}
\end{equation}
Since the convergence of remainders to $0$ of the Taylor expansions (a), (b) and (c) is uniform on $[0,1]^n$, we deduce that for any $k\in\N$ there exists $h^*\in(0,\min\{\eta,\theta/2\})$ such that for any $h\in(-2h^*,2h^*)\setminus\{0\}$:
$$\frac{\lvert r(x,hv)\rvert}{\lvert h\rvert}<\eta \qquad \text{and}\qquad\frac{\lVert R(x,hv)\rVert}{\lvert h\rvert}+\frac{\left\lVert\mathcal{R}(x,hv)\right\rVert}{\lvert h\rvert}\leq 1,$$
for any $x\in [0,1]^n$.
Moreover, thanks to \eqref{numbero2}, \eqref{label10} and some algebraic computations that we omit, we deduce that:
\begin{equation}
\begin{split}
\left\lvert\frac{F_u^\eta(x+hv)-F_u^\eta(x)}{h}-u \right\rvert
<8 \eta<10\eta- h^*.
\nonumber
\end{split}
\end{equation}
\end{proof}

We are ready to prove Proposition \ref{evev}, which we restate here for reader's convenience:

\evev*

\begin{proof}
For any $k_0<j<k$ we have $S_k^{v,w}\subseteq S_j^{v,w}$.
Indeed, if $g\in S_k^{v,w}$, there exist $0<\delta<1/k$ such that for any $x\in E$ one can find $\tau\in(-1/k,-\delta)\cup(\delta,1/k)\subset (-1/j,-\delta)\cup(\delta,1/j)$ for which:
\begin{equation}
\left\lvert\frac{g(x+\tau e)-g(x)-\tau w}{\tau}\right\rvert<\frac{1}{k}-\delta<\frac{1}{j}-\delta,
\nonumber
\end{equation}
which implies that $g\in S_j^{v,w}$.
To prove that $S_{k}^{v,w}$ is open, choose $\rho>0$ in such a way that:
$$\frac{2\rho}{\delta}+\left\lvert\frac{g(x+\tau e)-f(x)-\tau w}{\tau}\right\rvert<\frac{1}{k}-\delta,$$
for any $x\in E$. Then for any $f\in\lip([0,1]^n,\R^m)$ such that $\lVert g-f\rVert_\infty\leq\rho$ we have:
\begin{equation}
\left\lvert\frac{f(x+\tau e)-f(x)-\tau w}{\tau}\right\rvert<\frac{2\lVert g-f\rVert_\infty}{\delta}+\left\lvert\frac{g(x+\tau e)-f(x)-\tau w}{\tau }\right\rvert<\frac{1}{k}-\delta,
\nonumber
\end{equation}
and thus $g\in S_{k}^{v,w}$.
Since $\mathcal{C}^{\infty}(\R^n,\R^m)$ is dense in $\lip([0,1]^n,\R^m)$, to prove the density of $S^{v,w}_k$, we just need to show that we can approximate smooth functions with functions in $S_k^{v,w}$. Proposition \ref{lung} implies that for any $f\in\mathcal{C}^\infty(\R^n,\R^m)$ we can find a sequence of functions $\{g_i\}_{i\in\N}\subset\lip([0,1]^n,\R^m)$ for which:
\begin{itemize}
    \item[($\alpha$)] $\lVert f-g_i\rVert_\infty\leq (i+k)^{-1}\lVert f\rVert_\infty$,
    \item[($\beta$)] there exist $0<h^*<(10(i+k))^{-1}$ such that for any $x\in E$ there is a $t\in (-(i+k)^{-1},-h^*)\cup(h^*,(i+k)^{-1})$ such that:
    $$\left\lvert\frac{g_i(x+te)-g_i(x)-wt}{t}\right\rvert<\frac{1}{i}-h^*.$$
\end{itemize}
Point ($\beta$) above implies that $g_i\in S_{i+k}^{v,w}$ for any $i\in\N$ and thus $\{g_i\}\subseteq S_k^{v,w}$. On the hand, point ($\alpha$) implies that $g_i$ are uniformly converging to $f$, proving the claim.
\end{proof}

\section{Construction of the winning strategy for Player II}
\label{Ss4}
Given an open interval $I\subseteq \R$, and a measurable function $f:I\to\R$, we recall that the Hardy-Littlewood maximal function $Mf$ of $f$ is defined as:
\begin{equation}
M f(t):=\sup_{\substack{t\in J\subseteq I\\J\text{ open}}}\fint_J\lvert f(t)\rvert dt.\nonumber
\end{equation}
It is well known that $M$ is a bounded operator on $L^p$ for any $p\in (1,\infty]$ and:
\begin{equation}
\lVert Mf\rVert_{L^p(I)}\leq \left(5 p\frac{2^{p-1}}{p-1}\right)^\frac{1}{p}\lVert f\rVert_{L^p(I)}\label{me}.
\end{equation}
For a proof of the inequality \eqref{me}, see for instance Section 3 of Chapter 1  in \cite{stein}.
\begin{definizione}
Let $I$ be an open interval and $\gamma:I\to \R^n$ the canonical parametrization of a $C(e,\sigma)$-curve. For any $f:\R^n\to\R^m$ Lipschitz function we define:
\begin{equation}
M_\gamma f:=M[(f\circ \gamma)^\prime],
\nonumber
\end{equation}
where $(f\circ\gamma)^\prime$ is the derivative of the Lipschitz curve $f\circ \gamma$.
\end{definizione}

The following proposition shows that if $f,g\in\mathfrak{P}(n,m)$ and $\gamma$ is a $C(e,\sigma)$-curve, up to a small error depending only on $\sigma$, one can estimate the $L^p$ norm of $M_\gamma(f-g)$ with $\lVert f-g\rVert_\infty$. This bound will be used in Proposition \ref{lemma1} to get an estimate of the width of the set where the derivative of $(f-g)$ is big along the direction $e$.

\begin{proposizione}\label{princ}
Let $e\in\mathbb{S}^{n-1}$, $\sigma\in(0,1)$, $p\in(4,\infty)$ and $\gamma:I\to [0,1]^n$, the canonical parametrization of a piece-wise affine $C(e,\sigma)$-curve. Let $f\in\mathfrak{P}(n,m)$ and suppose that $\Pi_f\in\tau$ is a partition of $[0,1]^n$ adapted to $f$.
Then, for any $g\in\mathfrak{P}(n,m)$ we have that:
\begin{equation}
\begin{split}
\lVert M_\gamma (f-g)\rVert_{L^p(I)}^p\leq 8^p\text{Card}(\Pi_f)^{p/2}n(\lVert f-g\rVert^{1/2}_{\infty}+\beta(\sigma)^{p/2}).
\label{numbero101}
\end{split}
\end{equation}
\end{proposizione}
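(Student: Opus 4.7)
My plan is to begin with the $L^p$-boundedness \eqref{me} of the Hardy–Littlewood maximal operator, which gives
\begin{equation*}
\lVert M_\gamma(f-g)\rVert_{L^p(I)}^p \;\leq\; 5p\cdot\frac{2^{p-1}}{p-1}\int_I \bigl|((f-g)\circ\gamma)'(t)\bigr|^p\,dt,
\end{equation*}
thereby reducing the problem to an $L^p$-estimate on the derivative of $\varphi := (f-g)\circ\gamma$.

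Since $f,g\in\mathfrak{P}(n,m)$ and $\gamma$ is piecewise affine, $\varphi$ is piecewise affine on $I$. I would write it as a disjoint union of its maximal affine subintervals $J_1,\ldots,J_N$, of lengths $\ell_k$ and constant slopes $D_k$, so that $\int_I|\varphi'|^p dt = \sum_k |D_k|^p \ell_k$. Two elementary bounds hold on every piece: the \emph{sup-norm} bound $|D_k|\ell_k \leq 2\lVert f-g\rVert_\infty$ (since $\varphi$ is affine on $J_k$ and uniformly bounded by $\lVert f-g\rVert_\infty$), and the \emph{Lipschitz} bound $|D_k|\leq 2(1+\beta(\sigma))$, coming from the $2$-Lipschitzness of $f-g$ and the $(1+\beta(\sigma))$-Lipschitzness of $\gamma$ recorded in Proposition~\ref{canone}(ii).

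Next I would partition the indices into \emph{short} pieces, with $\ell_k\leq\alpha$, and \emph{long} pieces, with $\ell_k>\alpha$, for a threshold $\alpha$ to be chosen at the end. Using the sup-norm bound on long pieces together with Proposition~\ref{diam}, their contribution is $\leq (2\lVert f-g\rVert_\infty/\alpha)^p\sqrt{n}$; using the Lipschitz bound on short pieces, their contribution is $\leq N(2(1+\beta(\sigma)))^p\alpha$. Choosing $\alpha$ of order $\sqrt{\lVert f-g\rVert_\infty/N}$ balances the two contributions and yields a bound of the order $N^{p/2}\,n\,\lVert f-g\rVert_\infty^{1/2}$, while the residual $\beta(\sigma)^{p/2}$ term of the statement absorbs the regime in which $\beta(\sigma)$ dominates $\lVert f-g\rVert_\infty$.

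The main obstacle is the combinatorial step of relating $N$ to $\text{Card}(\Pi_f)$: the breakpoints of $\varphi$ arise from the breakpoints of the piecewise affine curve $\gamma$ and from crossings of $\gamma$ with the codimension-$1$ faces of the simplices of $\Pi_f$ (and of any partition adapted to $g$). The strict monotonicity of $t\mapsto\gamma(t)\cdot e$ built into the canonical parametrization (Proposition~\ref{canone}) prevents too many re-entries of $\gamma$ into each simplex of $\Pi_f$ whose faces are transverse to $e$; the near-orthogonal faces have to be handled separately and are exactly what feeds into the $\beta(\sigma)^{p/2}$ correction. The rigidity of piecewise congruent mappings is what ultimately allows one to discount the complexity of a partition adapted to $g$ and keep only $\text{Card}(\Pi_f)$ in the final estimate.
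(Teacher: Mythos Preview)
Your approach has a genuine gap at the combinatorial step, and it is not a matter of bookkeeping. The number $N$ of maximal affine pieces of $(f-g)\circ\gamma$ depends not only on $\Pi_f$ but also on a partition $\Pi_g$ adapted to $g$ \emph{and} on the number of breakpoints of the piecewise-affine curve $\gamma$; the hypotheses place no bound on either. Your remarks about transversality of faces to $e$ do not help: even a single convex simplex $P\in\Pi_f$ can be entered and exited by $\gamma$ as many times as $\gamma$ has breakpoints, and the complexity of $\Pi_g$ is entirely unrelated to $\text{Card}(\Pi_f)$. So there is no way to obtain $N\lesssim\text{Card}(\Pi_f)$ from the hypotheses, and your short/long balancing cannot yield the stated bound.

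The paper's proof avoids counting pieces altogether. The decisive observation is that for $A,B\in O(n,m)$ one has $\lvert(A-B)v\rvert^2 = 2\langle Av,(A-B)v\rangle$. Fixing $P\in\Pi_f$ (on which $f$ is the isometry $A$) and straightening $\gamma$ inside $P$ by linear interpolation to a curve $\gamma_P\subset\text{cl}(P)$, one gets
\[
\int_{J_P}\lvert(\mathfrak{d}\circ\gamma_P)'\rvert^2\,dt
= 2\sum_i\int_{r_i}^{r_{i+1}}\big\langle A\gamma_P',(A-B_i)\gamma_P'\big\rangle\,dt,
\]
with $(r_i,r_{i+1})$ the (arbitrarily many) subintervals on which $g$ is the isometry $B_i$. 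Writing $\gamma_P'=e+\eta_P'$ with $\lvert\eta_P'\rvert\leq\beta(\sigma)$, the $e$-part \emph{telescopes} to the single boundary term $2\langle Ae,\mathfrak{d}(\gamma_P(b(P)))-\mathfrak{d}(\gamma_P(a(P)))\rangle\leq 4\lVert f-g\rVert_\infty$, while the $\eta_P'$-part contributes an error of order $\beta(\sigma)\,\lvert J_P\rvert$. This collapses all dependence on $\Pi_g$ and on the breakpoints of $\gamma$; the only surviving sum is over $P\in\Pi_f$, which is where $\text{Card}(\Pi_f)$ enters. The remainder of the argument is Jensen together with the $L^{p/2}$-maximal inequality applied to these $L^2$-averages. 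The ``rigidity'' you invoke is precisely this identity for isometries; without actually using it, there is no mechanism to eliminate $\Pi_g$ and the breakpoints of $\gamma$ from the estimate.
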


\begin{proof}
Let $J\subseteq I$ be an open interval and for any $P\in\Pi_f$ define:
\begin{equation}
T_P:=\{s\in J: \gamma(s)\in\text{cl}(P)\}.\nonumber
\end{equation}
Since $\gamma$ is the canonical parametrization of a piece-wise affine curve, there are finitely many disjoint open intervals $I_k$ for which $\gamma\vert_{I_k}$ is a non-degenerate segment and $J\subseteq \bigcup_{k=1}^{N_1} \text{cl}(I_k)$. This implies that $T_P$ has a finite number  of connected components since the $P$'s are convex. Furthermore, since $\gamma$ is continuous, $(T_P)^c$ is relatively open in $J$.
Define $a(P):=\inf T_P$ and $b(P):=\sup T_P$ and let: $$(a(P),b(P))\cap(T_P)^c=\bigcup_{j=1}^{N_2} (a_j,b_j).$$
Define on $J_P:=(a(P),b(P))$ the curve $\gamma_P:J_P\to \R^n$, which is the linear interpolation of $\gamma$ inside $P$:
\begin{equation}
\gamma_P(t):=
\begin{cases}
\gamma(t) &\text{ for } t\in T_P,\\
\gamma(a_j)+\frac{\gamma(b_j)-\gamma(a_j)}{b_j-a_j}(t-a_j) &\text{ for } t\in (a_j,b_j).
\end{cases}
\nonumber
\end{equation}
Note that $\gamma_P$ is a $C(e,\sigma)$-curve which by construction is contained in $\text{cl}(P)$ and coincides with $\gamma$ on $T_P$. Moreover one can check that $\gamma_P^\prime(t)=e+\eta_P^\prime(t)$, where $\eta^\prime_P\in e^\perp$ and that $\lvert \eta_P^\prime(t)\rvert\leq \beta(\sigma)$ for almost every $t\in J_P$. Define $\mathfrak{d}:=f-g$, and note that we have that the following inequality holds:
\begin{equation}
\fint_J\left\lvert (\mathfrak{d}\circ\gamma)^\prime(t)\right\rvert^2 dt\leq\frac{1}{\mathcal{L}^1(J)}\sum_{P\in\Pi_f} \int_{J_P}\left\lvert (\mathfrak{d}\circ\gamma_P)^\prime(t)\right\rvert^2 dt\label{bd1}.
\end{equation}
Since $g$ is piece-wise congruent mapping, for any $P$ there is a finite family of open disjoint subintervals $\{(r_i,r_{i+1})\}_{i=1,\ldots,N_3}$ of $J_P$ such that:
\begin{itemize}
\item[(i)] For any $i=1,\ldots,N_3$ there are $A,B_i\in O(n,m)$ and $a,b_i\in\R^m$ for which:
$$\mathfrak{d}(\gamma_P(t))=(A-B_i)\gamma_P(t)+(a-b_i),$$
for any $t\in (r_i,r_{i+1})$,
\item[(ii)] $J_P\subseteq\bigcup_{i=1}^{N_3} [r_i,r_{i+1}]$.
\end{itemize}
Points (i) and (ii) above imply that the right-hand side of \eqref{bd1} can be rewritten as: 
\begin{equation}
\begin{split}
&\int_{J_P}\left\lvert (\mathfrak{d}\circ\gamma_P)^\prime(t)\right\rvert^2 dt=\sum_{i=1}^{N_3} \int_{r_i}^{r_{i+1}}\left\lvert (A-B_i)\gamma^\prime_P(t)\right\rvert^2 dt\\
=&\sum_{i=1}^{N_3} \int_{r_i}^{r_{i+1}}\big(\lvert A\gamma_P^\prime(t)\rvert^2+\lvert B_i\gamma_P^\prime(t)\rvert^2-2\langle A\gamma_P^\prime(t),B_i\gamma_P^\prime(t)\rangle\big)dt\\
=2\sum_{i=1}^{N_3} \int_{r_i}^{r_{i+1}}\big(&\lvert A\gamma_P^\prime(t)\rvert^2-\langle A\gamma_P^\prime(t),B_i\gamma_P^\prime(t)\rangle\big)dt=2\sum_{i=1}^{N_3} \int_{r_i}^{r_{i+1}}\left\langle A\gamma_P^\prime(t),(A-B_i)\gamma_P^\prime(t) \right\rangle dt,
\label{numbero12}
\end{split}
\end{equation}
where the second last equality comes from the fact that $A$ and $B_i$ are both distance preserving and thus $\lvert Av\rvert=\lvert B_iv\rvert=\lvert v\rvert$ for any $v\in\R^n$.
Furthermore, since $\gamma^\prime_P=e+\eta^\prime_p$ and:
$$\lvert\left\langle A\eta_P^\prime(t),(A-B_i)\gamma_P^\prime(t) \right\rangle\rvert\leq \lvert\eta_P^\prime\rvert\cdot2\lvert\gamma_P^\prime(t)\rvert\leq2\beta(\sigma)(1+\beta(\sigma)),$$
we have that for any $i\in{1,\ldots, N_3}$:
\begin{equation}
\begin{split}
\int_{r_i}^{r_{i+1}}\left\langle A\gamma_P^\prime(t),(A-B_i)\gamma_P^\prime(t) \right\rangle dt
\leq\int_{r_i}^{r_{i+1}}\langle Ae,&(A-B_i)\gamma^\prime_P(t) \rangle dt+2(r^{i+1}-r^i)\beta(\sigma)\left(1+\beta(\sigma)\right).
\label{numbero11}
\end{split}
\end{equation}
Furthermore, the first term of the right-hand side of \eqref{numbero11} thanks to the fundamental theorem of calculus can be rewritten as:
\begin{equation}
\begin{split}
&\int_{r_i}^{r_{i+1}}\left\langle Ae,(A-B_i)(\gamma_P^\prime(t)) \right\rangle dt=\left\langle Ae,\mathfrak{d}(\gamma_P(r_{i+1}))-\mathfrak{d}(\gamma_P(r_i)) \right\rangle.
\label{numbero10}
\end{split}
\end{equation}
Thus, putting together \eqref{numbero12}, \eqref{numbero11} and \eqref{numbero10}, we deduce that:
\begin{equation}
\begin{split}
\int_{J_P}\left\lvert (\mathfrak{d}\circ\gamma_P)^\prime(t)\right\rvert^2 dt
\leq 2\big\langle Ae,\mathfrak{d}(\gamma_P(b(P)))-\mathfrak{d}(\gamma_P(a(P))\big\rangle
+4(b(P)-a(P))\beta(\sigma)\left(1+\beta(\sigma)\right),
\label{numbero21}
\end{split}
\end{equation}
which in the case $\eta^\prime_P=0$, turns into:
\begin{equation}
\begin{split}
\int_{J_P}\left\lvert (\mathfrak{d}\circ\gamma_P)^\prime(t)\right\rvert^2 dt
=&2 \big\langle Ae,\mathfrak{d}(\gamma_P(b(P)))-\mathfrak{d}(\gamma_P(a(P)))\big\rangle \leq 2\lVert \mathfrak{d}\rVert_\infty=2\lVert f-g\rVert_\infty.
\end{split}\label{eqeqeq}
\end{equation}
Defined $v_P:=\frac{\gamma(b(P))-\gamma(a(P))}{b(P)-a(P)}-e$, we let $\tilde{\gamma}:(a(P),b(P))\to\R^n$ be the curve: $$\tilde{\gamma}(t):=(e+v_P)(t-a(P)).$$
The first identity in \eqref{eqeqeq} and the fact that $\lvert v_P\rvert\leq \beta(\sigma)$ imply that:
\begin{equation}
\begin{split}
\Big\langle Ae,\mathfrak{d}(\gamma_P(b(P)))-\mathfrak{d}(\gamma_P(a(P))) \Big\rangle
\leq&\Big\langle A\tilde{\gamma}_P^\prime,\mathfrak{d}(\gamma_P(b(P)))-\mathfrak{d}(\gamma_P(a(P)))  \Big\rangle+2\beta(\sigma)(b(P)-a(P))\\
=&\frac{1}{2}\int_{J_P}\lvert (\mathfrak{d}\circ\tilde{\gamma}_P)^\prime(t)\rvert^2dt +2\beta(\sigma)(b(P)-a(P)).
\label{numbero20}
\end{split}
\end{equation}
Putting together \eqref{numbero21} and \eqref{numbero20}, we deduce that:
\begin{equation}
\begin{split}
\int_{J_P}\left\lvert (\mathfrak{d}\circ\gamma_P)^\prime(t)\right\rvert^2 dt
\leq&\int_{J_P}\lvert (\mathfrak{d}\circ\tilde{\gamma}_P)^\prime(t)\rvert^2dt+4(b(P)-a(P))\beta(\sigma)\left(2+\beta(\sigma)\right).
\label{bd2}
\end{split}
\end{equation}
Thus, the bounds \eqref{bd1}, \eqref{bd2} and the fact that $\beta(\sigma)\leq1$, which is due to the fact that $\sigma<1/10$, yield:
\begin{equation}
\begin{split}
\fint_J\left\lvert (\mathfrak{d}\circ\gamma)^\prime\right\rvert^2 dt
\leq\fint_J\sum_{P\in\Pi_f}\chi_{J_P}(t)\left\lvert (\mathfrak{d}\circ\tilde{\gamma}_P)^\prime(t)\right\rvert^2 dt+16\beta(\sigma)\text{Card}(\Pi_f).\label{label11}
\end{split}
\end{equation}
Now that we have an estimate for $\fint_J\left\lvert (\mathfrak{d}\circ\gamma)^\prime\right\rvert^2 dt
$ we will use \eqref{me} and Jensen's inequality to prove that \eqref{numbero101} holds. Thanks to Jensen's inequality we have:
\begin{equation}
\begin{split}
M_\gamma \mathfrak{d}(t)\leq\sup_{\substack{t\in J\subseteq I\\J\text{ open}}}\left(\fint_J\left\lvert (\mathfrak{d}\circ\gamma)^\prime(t)\right\rvert^2 dt\right)^\frac{1}{2}.
\label{label12}
\end{split}
\end{equation}
Combining the bounds given by \eqref{label11} and \eqref{label12}, we deduce that:
\begin{equation}
M_\gamma \mathfrak{d}(r)\leq\sup_{\substack{r\in J\subseteq I\\J\text{ open}}}\left(\fint_J\sum_{P\in\Pi_f}\chi_{J_P}(t)\left\lvert h_P(t)\right\rvert^2 dt+16\beta(\sigma)\text{Card}(\Pi_f)\right)^{1/2},
\label{bd3}
\end{equation}
where $h_P(t):=(\mathfrak{d}\circ\tilde{\gamma}_P)^\prime(t)$. Furthermore, Jensen's inequality and \eqref{bd3} imply: 
\begin{equation}
\begin{split}
\lVert M_\gamma \mathfrak{d}\rVert_{L^p(I)}^p
\leq2^{p/2-1}&\int_{I}\bigg(\sup_{\substack{r\in J\subseteq I\\J\text{ open}}}\fint_J\sum_{P\in\Pi_f}\chi_{J_P}(t)\left\lvert h_P(t)\right\rvert^2 dt\bigg)^{p/2}dr+\frac{\mathcal{L}^1(I)(32\beta(\sigma)\text{Card}(\Pi_f))^{p/2}}{2}.
\label{bd4}
\end{split}
\end{equation}
Applying the strong $L^p$ estimate \eqref{me} of the Maximal operator on the real line to the first term on the right-hand side of \eqref{bd4}, we obtain:
\begin{equation}
\begin{split}
\lVert M_\gamma \mathfrak{d}\rVert_{L^p(I)}^p
\leq&2^{p+3}\int_{I}\bigg(\sum_{P\in\Pi_f}\chi_{J_P}(t)\left\lvert h_P(t)\right\rvert^2 \bigg)^{p/2}dt+\frac{\mathcal{L}^1(I)(32\beta(\sigma)\text{Card}(\Pi_f))^{p/2}}{2}.
\nonumber
\end{split}
\end{equation}
Using the interpolation H\"oelder's inequality $\lVert f\rVert_{L^p}^p\leq\lVert f\rVert_{L^2}\lVert f\rVert_{L^{2(p-1)}}^{p-1}$, Jensen's inequality and the fact that $\mathfrak{d}\circ\gamma$ is $2(1+\beta(\sigma))$-Lipschitz, we deduce that:
\begin{equation}
\begin{split}
\int_{I}\Big(\sum_{P\in\Pi_f}&\chi_{J_P}(t)\left\lvert h_P(t)\right\rvert^2 \Big)^{p/2}dt
\leq\text{Card}(\Pi_f)^{{p/2}-1}\int_{I}\sum_{P\in\Pi_f}\chi_{J_P}(t)\left\lvert h_P(t)\right\rvert^p dt\\
\leq&\text{Card}(\Pi_f)^{p/2-1}\sum_{P\in\Pi_f}\left(\int_{J_P}\left\lvert h_P(t)\right\rvert^2 dt\right)^{1/2}\left(\int_{J_P}\left\lvert h_P(t)\right\rvert^{2(p-1)} dt\right)^{1/2}\\
\leq&4^{p-1}\text{Card}(\Pi_f)^{p/2-1}\sum_{P\in\Pi_f}\mathcal{L}^1(J_P)^{1/2}\left(\int_{J_P}\left\lvert h_P(t)\right\rvert^2 dt\right)^{1/2},
\end{split}
\nonumber
\end{equation}
Thanks to the inequality \eqref{eqeqeq}, we have $\int_{J_P}\left\lvert h_p(t)\right\rvert^2 dt\leq 2\lVert f-g\rVert_\infty$ and thus:
\begin{equation}
\begin{split}
\int_{I}\Big(\sum_{P\in\Pi_f}&\chi_{J_P}(t)\left\lvert h_P(t)\right\rvert^2 \Big)^{p/2}dt\leq 4^p\text{Card}(\Pi_f)^{p/2}\mathcal{L}^1(I)^{1/2}\lVert f-g\rVert_\infty^{1/2}.
\end{split}
\label{label13}
\end{equation}
Thanks to Proposition \ref{diam} and the inequalities \eqref{bd4} and \eqref{label13}, we conclude that:
\begin{equation}
\begin{split}
\lVert M_\gamma \mathfrak{d}\rVert_{L^p(I)}^p\leq 8^p\text{Card}(\Pi_f)^{p/2}n(\lVert f-g\rVert^{1/2}_\infty+\beta(\sigma)^{p/2}),
\nonumber
\end{split}
\end{equation}
which concludes the proof.
\end{proof}

The following lemma, stated here without proof, is an immediate consequence of the properties of Lipschitz functions.

\begin{lemma}\label{pippol}
Let $0<\epsilon$ and $F:\R^n\to\R^m$ be a $2$-Lipschitz function. Suppose there are $x,y\in\R^n$ such that:
\begin{equation}
\lvert F(y)-F(x)\rvert\geq\epsilon\lvert x-y\rvert\nonumber.
\end{equation}
Then, for any $0<r<\frac{\epsilon}{16}\lvert x-y\rvert$, $z\in B_r(x)$ and $w\in B_r(y)$ we have:
\begin{equation}
\lvert F(w)-F(z)\rvert\geq\frac{\epsilon}{2}\lvert z-w\rvert\nonumber.
\end{equation}
\end{lemma}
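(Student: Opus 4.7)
The plan is to use the triangle inequality on $F$, using the hypothesis $|F(y)-F(x)|\geq \epsilon|x-y|$ as the main source of size and using the $2$-Lipschitz property of $F$ to control the perturbations caused by moving from $x$ to $z$ and from $y$ to $w$.

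First I would write
\begin{equation}
|F(w)-F(z)|\geq |F(y)-F(x)|-|F(y)-F(w)|-|F(x)-F(z)|\geq \epsilon|x-y|-2r-2r=\epsilon|x-y|-4r,\nonumber
\end{equation}
using the hypothesis together with the fact that $F$ is $2$-Lipschitz and $z\in B_r(x)$, $w\in B_r(y)$. Next I would upper-bound the distance $|z-w|$ by the triangle inequality as $|z-w|\leq |x-y|+2r$, so that
\begin{equation}
\tfrac{\epsilon}{2}|z-w|\leq \tfrac{\epsilon}{2}|x-y|+\epsilon r.\nonumber
\end{equation}
Combining the two bounds, the desired inequality $|F(w)-F(z)|\geq \tfrac{\epsilon}{2}|z-w|$ reduces to checking that
\begin{equation}
\tfrac{\epsilon}{2}|x-y|\geq (4+\epsilon)r.\nonumber
\end{equation}

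The hypothesis $r<\tfrac{\epsilon}{16}|x-y|$ gives $(4+\epsilon)r<\tfrac{(4+\epsilon)\epsilon}{16}|x-y|$, so it suffices to have $4+\epsilon\leq 8$, i.e.\ $\epsilon\leq 4$. The only subtle point is to observe that this is automatic from the assumptions: since $F$ is $2$-Lipschitz, $\epsilon|x-y|\leq |F(y)-F(x)|\leq 2|x-y|$, hence $\epsilon\leq 2$, and in particular $4+\epsilon\leq 6<8$. This closes the argument. There is essentially no obstacle; the only thing to be careful about is extracting the implicit bound $\epsilon\leq 2$ from the Lipschitz assumption rather than assuming it.
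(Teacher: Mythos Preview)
Your proof is correct. The paper itself states this lemma without proof, calling it ``an immediate consequence of the properties of Lipschitz functions,'' and your argument supplies exactly the routine triangle-inequality computation the authors had in mind; the observation that $\epsilon\leq 2$ is forced by the $2$-Lipschitz hypothesis (when $x\neq y$, which is automatic since otherwise no admissible $r$ exists) is the only point worth making explicit, and you handled it.
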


The following theorem shows that if two piece-wise congruent functions are close in the supremum norm, the set where their directional derivatives along a fixed direction are far away has small width.

\begin{teorema}\label{lemma1}
Let $0<\epsilon,\omega<1/10nk_0$, $k_0\in\N$ and $e\in\mathbb{S}^{n-1}$. Let $f\in\mathfrak{P}(n,m)$, $\Pi_f\in\tau$ be a partition of $[0,1]^n$ adapted to $f$ and $\sigma\in(0,2^{-10}\text{Card}(\Pi_f)^{-1}\epsilon^{4})$. Then there exists a neighbourhood $V$ of $f$ in $\lip([0,1]^n,\R^m)$ for which:
\begin{itemize}
\item[(i)] $\text{diam}(V)< 2^{-8p}n^{-2}\text{Card}(\Pi_f)^{-p}\epsilon^{2p}\omega^2$ where $p:=-\text{log}(\omega)/\text{log}(2)$,
\item[(ii)] for any $g\in V\cap\mathfrak{P}(n,m)$ there exist open set $G\subseteq (0,1)^n$ such that:
\begin{itemize}
\item[(a)] $w_{C(e,\sigma)}[(4/k_0,1-4/k_0)^n\setminus G]<(1+n^\frac{1}{2})\omega$,
\item[(b)] $\lvert f(x+te)-f(x)-g(x+te)+g(x)\rvert<\epsilon \lvert t\rvert$ for any $x\in G\cap (4/k_0,1-4/k_0)^n$ and any $t\in\R$ such that $x+te\in[1/2k_0,1-1/2k_0]^n$.
\end{itemize}
\end{itemize}
\end{teorema}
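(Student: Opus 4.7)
The argument combines Proposition \ref{princ} (an $L^p$ estimate for the Hardy--Littlewood maximal function of $(f-g)\circ\gamma$ along any canonical $C(e,\sigma)$-curve) with Lemma \ref{pippol} (Lipschitz rigidity of large difference quotients). I would take $V$ to be the open ball in $\lip([0,1]^n,\R^m)$ of radius equal to half the diameter stated in (i); then (i) is immediate, and for any $g\in V\cap\mathfrak{P}(n,m)$ the $2$-Lipschitz difference $\mathfrak{d}:=f-g$ satisfies $\|\mathfrak{d}\|_\infty<2^{-8p}n^{-2}\text{Card}(\Pi_f)^{-p}\epsilon^{2p}\omega^2$. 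Plugging this bound, together with $\sigma<2^{-10}\text{Card}(\Pi_f)^{-1}\epsilon^{4}$ (which controls $\beta(\sigma)^{p/2}$ via $\beta(\sigma)\le 2\sqrt{\sigma}$), into the estimate of Proposition \ref{princ} yields, after elementary algebra with $2^{-p}=\omega$,
\[
\|M_\gamma\mathfrak{d}\|_{L^p(I)}^p \lesssim \omega^2\epsilon^p,
\]
for every canonical $C(e,\sigma)$-curve $\gamma:I\to[0,1]^n$. Chebyshev's inequality then gives $\mathcal{L}^1(E_\gamma)\le\omega$ for $E_\gamma:=\{t\in I : M_\gamma\mathfrak{d}(t)>\epsilon/2\}$ (absorbing the implicit constant into $\delta_0$ if necessary).

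Next I would build the open set $G$ geometrically. Define the bad set
\[
B:=\bigl\{x\in(0,1)^n : \exists\,t\neq 0,\ x+te\in[1/(2k_0),1-1/(2k_0)]^n,\ |\mathfrak{d}(x+te)-\mathfrak{d}(x)|\ge\epsilon|t|\bigr\},
\]
fix a witness $t_x$ for each $x\in B$, and apply Lemma \ref{pippol} to $F:=\mathfrak{d}$ with threshold $\epsilon$ to obtain a radius $r_x:=\epsilon|t_x|/32$ such that every $z\in B_{r_x}(x)$ and $w\in B_{r_x}(x+t_xe)$ satisfy $|\mathfrak{d}(w)-\mathfrak{d}(z)|\ge(\epsilon/2)|w-z|$. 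Setting $H:=\bigcup_{x\in B}B_{r_x}(x)$ and $G:=(0,1)^n\setminus\overline H$, the set $G$ is open, avoids $\overline B$, so $|\mathfrak{d}(x+te)-\mathfrak{d}(x)|<\epsilon|t|$ for every $x\in G$ and every admissible $t$, yielding (ii)(b). Since $(4/k_0,1-4/k_0)^n\setminus G\subseteq\overline H\cap(4/k_0,1-4/k_0)^n$, (ii)(a) reduces to proving $w_{C(e,\sigma)}[\overline H]\le(1+n^{1/2})\omega$.

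The main obstacle is this last width bound. I would use $H$ itself as the open witness in the definition of width and show
\[
\gamma^{-1}(H)\subseteq E_\gamma \qquad \text{for every } \gamma\in\Gamma_{e,\sigma}.
\]
Fix $s\in\gamma^{-1}(H)$ with $\gamma(s)\in B_{r_x}(x)$, and assume $t_x>0$ (the opposite case is handled symmetrically by traversing $\gamma$ backwards). By Proposition \ref{canone}(ii) the canonical parametrization obeys $|\gamma(s+t_x)-\gamma(s)-t_xe|\le\beta(\sigma)t_x$; since $\beta(\sigma)t_x\ll r_x$ by the hypothesis on $\sigma$, the point $\gamma(s+t_x)$ lies inside $B_{r_x}(x+t_xe)$. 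Lemma \ref{pippol} then gives
\[
|\mathfrak{d}(\gamma(s+t_x))-\mathfrak{d}(\gamma(s))|\ge(\epsilon/2)|\gamma(s+t_x)-\gamma(s)|\ge(\epsilon/2)(1-\beta(\sigma))\,t_x,
\]
so $M_\gamma\mathfrak{d}(s)>\epsilon/2$ after a minor tightening of constants absorbing the $\beta(\sigma)$ loss. Combining this inclusion with the Chebyshev bound $\mathcal{L}^1(E_\gamma)\le\omega$ and the velocity estimate $|\gamma'|\le 1+\beta(\sigma)\le 1+n^{1/2}$ from Proposition \ref{canone}(ii), one obtains
\[
\int_{\gamma^{-1}(H)}|\gamma'(t)|\,dt\le(1+\beta(\sigma))\,\omega\le(1+n^{1/2})\,\omega,
\]
uniformly in $\gamma$, and taking the infimum over open neighbourhoods in the definition of width delivers (ii)(a). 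The delicate points I would handle carefully are verifying that $s+t_x$ remains inside the parameter domain $I$ of $\gamma$---which uses the maximality built into $\Gamma_{e,\sigma}$ together with $x+t_xe$ being well inside $[0,1]^n$---and the book-keeping of constants in the $\delta_0$, $r_x$, and threshold choices so that the strict inequality $M_\gamma\mathfrak{d}(s)>\epsilon/2$ genuinely holds after the $\beta(\sigma)$-error is absorbed.
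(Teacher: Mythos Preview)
Your approach is correct and essentially identical to the paper's: define the bad set $B$ (the paper's $F(g)$), thicken it to an open neighbourhood via Lemma~\ref{pippol}, show that any $\gamma\in\Gamma_{e,\sigma}$ entering this neighbourhood satisfies $M_\gamma\mathfrak{d}>\epsilon/2$ there by using the $\beta(\sigma)$-drift of the canonical parametrisation, and conclude with Proposition~\ref{princ} and Chebyshev. Two cosmetic fixes: take $G:=(0,1)^n\setminus B$ (the paper observes $B$ is compact) rather than $(0,1)^n\setminus\overline H$, so that $H$ is a legitimate open witness in the width definition (as written $H\not\supseteq\overline H$); and shrink the defining radii of $H$ a bit---the paper uses $\epsilon|r(x)|/64$---so that $|\gamma(s)-x|+\beta(\sigma)|t_x|$ still lands inside the ball of radius $\epsilon|t_x|/16$ allowed by Lemma~\ref{pippol}, since with your choice $r_x=\epsilon|t_x|/32$ the bound $|\gamma(s+t_x)-(x+t_xe)|<r_x+\beta(\sigma)|t_x|$ overshoots $r_x$.
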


\begin{proof}
For any $g\in\mathfrak{P}(n,m)$ we define:
\begin{itemize}
    \item[(i)] $F(g)$ as the set of those $x\in[1/2k_0,1-/2k_0]^n$ for which there exists $t=t(x)\in\R$ such that $x+te\in[1/2k_0,1-1/2k_0]^n$ and:
    $$\lvert f(x+te)-f(x)-g(x+te)+g(x)\rvert\geq\epsilon \lvert t\rvert.$$
Note that by the Lipschitzianity of $f$ and $g$ and the compactness of $[1/2k_0,1-1/2k_0]^n$ imply that $F(g)$ is compact.
\item[(ii)] For any $x\in F(g)$ we let $T(x)$ to be the compact set of those $t\in\R$ for which
\begin{equation}
\lvert f(x+te)-f(x)-g(x+te)+g(x)\rvert\geq\epsilon \lvert t\rvert,\nonumber
\end{equation}
and $x+te\in[1/2k_0,1-1/2k_0]^n$. Furthermore we define for any $x\in F(g)$
\begin{equation}
r(x):=\begin{cases}
\min T(x) &\text{ if } \lvert\min T(x)\rvert\geq  \lvert\max T(x)\rvert,\\
\max T(x) &\text{ otherwise}.
\end{cases}
\nonumber
\end{equation}
\item[(iii)]Finally, we introduce the following neighbourhood of $E$:
\begin{equation}
G_\epsilon:=\bigcup_{x\in F_2}B_{\frac{\epsilon\lvert r(x)\rvert}{64}}(x).\nonumber
\end{equation}
\end{itemize}

Let $\gamma:(0,T)\to\R^n$ be a curve in $\Gamma_{e,\sigma}$, which was introduced in Definition \ref{spessore}, and assume that there exists $\overline{t}\in (0,T)$ such that $\gamma(\overline{t})\in G_\epsilon$.
This implies that there is $x\in F(g)$ such that $\lvert x-\gamma(\overline{t})\rvert\leq \frac{\epsilon}{64}\lvert r(x)\rvert$.
We define now the curve $\tilde{\gamma}:\R\to\R^n$ as $\tilde{\gamma}(t):=te+\tilde{\eta}(t)$, where:
\begin{equation}
    \tilde{\eta}(t):=\begin{cases}
\eta(0) & \text{if } t\leq0,\\
\eta(t) &\text{if } t\in(0,T)\\
\eta(T) &\text{if } t\geq T.
\end{cases}
\nonumber
\end{equation}
The curve $\tilde{\eta}$ is $\beta(\sigma)$-Lipschitz since $\eta$ is and this implies that:
\begin{equation}
\begin{split}
    \lvert \tilde{\gamma}(\overline{t}+r(x))-x-r(x)e\rvert=&\lvert (\overline{t}+r(x))e+\tilde{\eta}(\overline{t}+r(x))-x-r(x)e\rvert=\lvert \overline{t}e+\tilde{\eta}(\overline{t}+r(x))-x\rvert\\
\leq & \lvert \tilde{\gamma}(\overline{t})-x\rvert+\lvert\tilde{\eta}(\overline{t})-\tilde{\eta}(\overline{t}+r(x))\rvert\leq \Big(\frac{\epsilon}{64}+\beta(\sigma)\Big)\lvert r(x)\rvert\leq \frac{3\epsilon\lvert r(x)\rvert}{64},
\end{split}
    \label{numbero1002}
\end{equation}
where the last inequality comes from the fact that $\beta(\sigma)\leq \epsilon/32$, thanks to the choice of $\sigma$. In particular since $\lvert r(x)\rvert\leq n$, we have:
\begin{equation}
    \lvert \tilde{\gamma}(\overline{t}+r(x))-x-r(x)e\rvert\leq \frac{n\epsilon}{16}\leq \frac{1}{160k_0}.
    \label{numbero1001}
\end{equation}
Since by definition of $r(x)$ we have that $1/2k_0\leq\text{dist}(([0,1]^n)^c,x+r(x)e)$, the bound \eqref{numbero1001} implies that $\tilde{\gamma}(\overline{t}+r(x))\in (0,1)^n$. We recall that since $\gamma\in\Gamma_{e,\sigma}$, then $\tilde{\gamma(t)}\not\in(0,1)^n$ if and only if $T\not\in(0,T)$. 
Therefore, we have that $\overline{t}+r(x)\in(0,T)$.
Thanks to the bound \eqref{numbero1002}, the fact that $\lvert \gamma(\overline{t})-x\rvert\leq \epsilon\lvert r(x)\rvert/64$ and that $x\in F(g)$ we have by Lemma \ref{pippol} that:
\begin{equation}
\frac{\epsilon}{2} \lvert r(x)\rvert\leq\lvert f(\gamma(\overline{t}+r(x)))-f(\gamma(\overline{t}))-g(\gamma(\overline{t}+r(x)))+g(\gamma(\overline{t}))\rvert.
\nonumber
\end{equation}
Thanks to the fundamental theorem of calculus and the above inequality, we deduce that:
\begin{equation}
\begin{split}
\frac{\epsilon}{2}\leq&\left\lvert\fint_{\overline{t}}^{\overline{t}+r(x)} \left\lvert ((f-g)\circ\gamma)^\prime(s)\right\rvert ds\right\rvert\leq M_\gamma (f-g)(\overline{t}),\label{label14}
\end{split}
\end{equation}
for any $\overline{t}$ for which $\gamma(\overline{t})\in G_\epsilon$. Inequality \eqref{label14} together with Proposition \ref{canone} imply:
\begin{equation}
\begin{split}
\frac{\epsilon^p}{2^p}w_{C(e,\sigma)}[F(g)]\leq&\inf_{\substack{F(g)\subseteq G\subseteq G_\epsilon\\ G\text{ open}}}\sup_{\gamma\in\Gamma_{e,\sigma}}\int_{\substack{t\in I\\ \gamma(t)\in G}} M_\gamma (f-g)(t)^p \lvert\gamma^\prime(t)\rvert dt\\
\leq&\left( 1+\beta(\sigma)\right)\sup_{\gamma\in\Gamma_{e,\sigma}}\int_{I} M_\gamma (f-g)(t)^pdt.
\end{split}\label{eql}
\end{equation}
Finally Proposition \ref{princ} and \eqref{eql} imply:
\begin{equation}
\begin{split}
w_{C(e,\sigma)}[F(g)]\leq 16^p\epsilon^{-p}\text{Card}(\Pi_f)^{p/2}n(\lVert f-g\rVert^{1/2}+\beta(\sigma)^{p/2}).
\end{split}
\nonumber
\end{equation}
In order to prove the proposition we just need to estimate the two terms of the right-hand side in the above inequality. By hypothesis we know that $\lVert f-g\rVert_\infty<n^{-2} 2^{-8p}\text{Card}(\Pi_f)^{-p}\epsilon^{2p}\omega^2$ with $p=-\log(\omega)/\log(2)$. Therefore the first term can be estimated by:
\begin{equation}
16^p\epsilon^{-p}\text{Card}(\Pi_f)^{p/2}n\lVert f-g\rVert_\infty^{1/2}<\omega.
\nonumber
\end{equation}
On the other hand, since $\beta(\sigma)\leq 4\sigma^{1/2}$ since $\sigma <1/10$, we have that:
\begin{equation}
\begin{split}
16^p\epsilon^{-p}\text{Card}(\Pi_f)^{p/2}n\beta(\sigma)^{p/2}\leq& 32^p\epsilon^{-p}\text{Card}(\Pi_f)^{p/2}n\sigma^{p/2}<n\epsilon^p,
\nonumber
\end{split}
\end{equation}
where the first inequality comes from the fact that $\sigma<2^{-10}\text{Card}(\Pi_f)^{-1}\epsilon^{4}$. Since $\epsilon<1/10$, then $\epsilon^p<2^{-p}=\omega$ and this proves the Proposition with the choice $G:=(0,1)^n\setminus F(g)$.
\end{proof}

As mentioned in the introduction the proof of the implication (ii)$\Rightarrow$(i) of Theorem \ref{main}, will be proved by means of a topological game, that we formally introduce here.

\begin{definizione}
Let $(X,\mathcal{T})$ be a topological space. The Banach-Mazur game, is a game between two players, Player I and Player II, where
Player I is dealt with an arbitrary subset $A\subseteq X$ and Player II with the set $B:=X\setminus A$.

The game $\langle A,B\rangle$ is played as follows: I chooses arbitrarily an open set $U_1\subseteq X$; then II chooses an open set $V_1\subseteq U_1$; then I chooses an open set $U_2\subseteq V_1$ and so on.
If the set $\left(\bigcap_{n\in\N} V_i\right)\cap A\neq \emptyset$ then I wins. Otherwise II wins.\label{BMgame}
\end{definizione}

The following proposition relates the Banach-Mazur game to the topology of the space on which it is played.

\begin{teorema}
There exists a strategy by which Player II can be sure to win if and only if $B$ is residual in $X$, or equivalently if and only if $A$ is meagre.\label{BMgioco}
\end{teorema}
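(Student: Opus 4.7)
The plan is to prove the two implications separately.

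For the easier direction, assume $A$ is meagre and write $A = \bigcup_{n \in \N} A_n$ with each $A_n$ nowhere dense, i.e., $\cl(A_n)$ has empty interior. Player II's strategy is: on the $n$-th turn, given Player I's open set $U_n$, the difference $U_n \setminus \cl(A_n)$ is a non-empty open set (since $U_n$ is non-empty and open while $\cl(A_n)$ has empty interior), and Player II picks any non-empty open $V_n \subseteq U_n \setminus \cl(A_n)$. This guarantees $V_n \cap A_n = \emptyset$ for every $n$, so $\bigcap_n V_n$ is disjoint from $A$, yielding the win.

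For the harder converse I would encode the winning strategy $\sigma$ into a tree and extract dense open sets from it by means of Zorn's lemma. The nodes at level $n$ are partial plays $(U_1, V_1, \ldots, U_n, V_n)$ consistent with $\sigma$, the root being the empty play associated to $X$. For each node $t$ at level $n$ with associated open set $V_n^t$, Zorn's lemma yields a maximal pairwise disjoint family $\{V_{n+1}^{t,\alpha}\}_\alpha$ of sets of the form $\sigma(U_1, \ldots, U_{n+1})$ for non-empty open $U_{n+1} \subseteq V_n^t$; these selected plays are declared to be the children of $t$. Setting $G_n := \bigcup_t V_n^t$ with $t$ ranging over all level-$n$ nodes, I would establish by induction that each $G_n$ is open and dense in $X$ and that $\bigcap_n G_n \subseteq B$, whence $A \subseteq \bigcup_n (X \setminus G_n)$ is meagre.

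The main technical obstacle, and the only place Zorn is truly needed, is to verify that for each $t$ the union $\bigcup_\alpha V_{n+1}^{t,\alpha}$ is dense in $V_n^t$: if some non-empty open $W \subseteq V_n^t$ were disjoint from this union, Player I could play $U_{n+1} := W$ and $\sigma$'s response $V_{n+1} \subseteq W$ would be disjoint from every $V_{n+1}^{t,\alpha}$, violating maximality. Combined with the inductive observation that at every level the selected sets are pairwise disjoint and open (children inherit disjointness from their parents), this yields openness and density of each $G_n$. The last step is an extraction argument: any $x \in \bigcap_n G_n$ lies in a unique $V_n^{t_n}$ at each level by disjointness, and the $t_n$'s necessarily form a branch of the tree since $V_{n+1}^{t_{n+1}} \subseteq V_n^{t_n}$ forces $t_n$ to be the parent of $t_{n+1}$. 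This branch encodes a legitimate infinite play played according to $\sigma$, so the winning condition gives $x \in \bigcap_n V_n^{t_n} \subseteq B$, completing the proof.
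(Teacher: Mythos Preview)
Your argument is correct and is precisely the classical proof of the Banach--Mazur theorem; the paper does not give its own proof but simply refers to Oxtoby's book, where exactly this tree-and-maximal-family construction appears (there for $\R$, but, as the paper remarks and as your write-up shows, nothing in the argument uses anything beyond the axioms of a topological space). In short, you have supplied the details the paper chose to outsource, and your exposition matches the cited source.
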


\begin{proof}
The proof of this result is given in~\cite{Oxtobi} only in the case of the real line. However that argument works in the same way in any complete metric space.
\end{proof}

\ueue*

\begin{proof}

Let $E_1\supseteq E_2\supseteq\ldots$ be relarively dense open subset of $F$ such that $\bigcap_{k=1}^\infty E_k\subseteq E$. 
To prove that $S$ is residual we will build a winning strategy for the Player II in the Banach-Mazur game where Player II is dealt with the set:
$$B:=\{f\in\lip([0,1]^n,\R^m): f\text{ is }not\text{ fully non-differentiable on }E\cap F\}.$$
In addition to the non-empty open subsets $V_k\subseteq \lip([0,1]^n,\R^m)$, we will make the second player choose: closed subsets $\mathcal{A}_k$ of $F$, functions in $V_k\cap\mathfrak{P}(n,m)$, directions $e_k\in\mathbb{S}^{n-1}$, $\sigma_k>0$ and non-empty relatively open subsets $M_k$ in $\mathcal{A}_k$ in such a way that:
\begin{itemize}
\item[(i)] $\diam(V_k)\leq 1/2^k$, $\sigma_k\leq 1/2^k$ and $\mathcal{A}_k$ has every portion of positive $\omega_{C(e_k,\sigma_k)}$-width,
\item[(ii)] $e_k\in C(e_{k-1},\sigma_{k-1})$ for any $k\geq 2$,
\item[(iii)] $\mathcal{A}_k\subseteq \mathcal{A}_{k-1}$ for any $k\geq 2$,
\item[(iv)]for every $g\in V_k\cap\mathfrak{P}(n,m)$ there is an open set $G\subseteq(0,1)^n$ such that:
\begin{itemize}
\item[(a)
] $w_{C(e_k,\sigma_k)}[(1/k_0,1-1/k_0)^n\setminus G]<w_{C(e_k,\sigma_k)}[M_k\cap E_k]$,
\item[(b)] $\lvert g(x+te_k)-g(x)-f_k(x+te_k)+f_k(x)\rvert\leq 2^{-k}\lvert t\rvert$ for any $x\in G\cap(1/k_0,1-1/k_0)^n$ and any $t\in\R$ for which $x+te_k\in [1/2k_0,1-1/2k_0]^n$,
\item[(c)] $f_k$ and $g$ have $2^{-k}$-directional derivative along $e_k$ for any $x\in G$,
\end{itemize}
\item[(v)] For any $x\in M_k$, $f_i$ are $2^{-i}$-directionally derivable along $e_i$ for any $i=1,\ldots, k$,
\item[(vi)]$\text{cl}(M_k)\subseteq M_{k-1}\cap E_{k-1}\cap\mathcal{A}_k$ if $k\geq 2$,
\item[(vii)]$\lvert f_k(x+te_{k-1})-f_k(x)-(f_{k-1}(x+te_{k-1})-f_{k-1}(x))\rvert\leq 2^{-k+1}\lvert t\rvert$ for any $x\in M_k$ and any $t\in\R$ for which $x+te_{k-1}\in [1/2k_0,1-1/2k_0]^n$ for any $k\geq 2$.
\end{itemize}

\paragraph*{Construction of the first move of Player II}

The construction of the answer of Player II to the first move $U_1$ of Player I starts by picking an arbitrary function $f_1\in U_1\cap\mathfrak{P}(n,m)$, which exists by Corollary \ref{dense}. Let $\tilde{M}_1$ be the set on which $f_1$ has at least one directional derivative on $F$ and note that $\tilde{M}_1$ is relatively open since $f_1$ is a piece-wise congruent mapping. Let $\Pi_{f_1}$ be a partition adapted to $f_1$, $8\sigma_1:=2^{-100(n+1)}\text{Card}(\Pi_{f_1})^{-2}(nk_0)^{-1}$ and let $\{u_i\}_{i=1,\ldots, N_1}$ be a finite $\sigma_1/16$-dense set in $\mathbb{S}^{n-1}$. Using Proposition \ref{splitter} we know that there exists $i\in\{1,\ldots, N_1\}$ and a closed subset $\mathcal{A}_1$ of $F$ which has every portion of positive $C(u_i,\sigma_1/8)$-width. We therefore define $e_1:=u_i$, $\epsilon:=2^{-10(n+1)}(nk_0)^{-1}$ and:
$$\omega_1:=\min\{2^{-10(n+1)},w_{C(e_1,\sigma_1)}(\tilde{M}_1\cap E_1\cap \mathcal{A}_1)/(1+n^{1/2})\}.$$
Applying Proposition \ref{lemma1} to $f_1$, $\omega_1$, $e_1$, $\sigma_1$ and $\epsilon$ we get an open neighbourhood $V_1$ of $f_1$ (with $\text{diam}(V_1)<2^{-1}$) such that for every $g\in V_1\cap \mathfrak{P}(n,m)$ there exist an open set $\tilde{G}$ for which:
\begin{itemize}
\item[(a')] $w_{C(e_1,\sigma_1)}[(1/k_0,1-1/k_0)^n\setminus \tilde{G}]< w_{C(e_1,\sigma_1)}[\tilde{M}_1\cap E_1\cap \mathcal{A}_1]$.
\item[(b')]$\lvert g(x+te_1)-g(x)-(f_1(x+te_1)-f_1(x))\rvert\leq\epsilon\lvert t\rvert\leq 2^{-10(n+1)}\lvert t\rvert\leq\lvert t\rvert/2$ for any $x\in \tilde{G}\cap (1/k_0,1-1/k_0)^n$ and any $t\in \R$ such that $x+te_1\in[1/2k_0,1-1/2k_0]^n$.
\end{itemize}
Moreover since $\sigma_1<\epsilon_1^2/2$, applying Proposition \ref{fortuna} to $f_1$, $g$, $e_1$, $\sigma_1$ and $\epsilon_1$ we obtain that $\text{cl}(\Xi(f_1,e_1,\epsilon_1))$ and $\text{cl}(\Xi(g,e_1,\epsilon_k))$ are $C(e,\sigma)$-null. We define:
\begin{itemize}
\item $M_1:=\tilde{M}_1\cap \mathcal{A}_1\setminus\text{cl}(\Xi(f_1,e_1,\epsilon_1))$,
\item $G:=\tilde{G}\setminus(\text{cl}(\Xi(f_1,e_1,\epsilon_1))\cup \text{cl}(\Xi(g,e_1,\epsilon_1)))$,
\end{itemize}
and note that $M_1$ si relatively open in $\mathcal{A}_1$ and $G$ is open. Thus $G$ is the open associated to $g$ that satisfies to our requirements:
\begin{itemize}
\item[(a)] $w_{C(e_1,\sigma_1)}[(1/k_0,1-1/k_0)^n\setminus G]< w_{C(e_1,\sigma_1)}[M_1\cap E_1]$,
\item[(b)]$\lvert g(x+te_1)-g(x)-(f_1(x+te_1)-f_1(x))\rvert\leq\epsilon\lvert t\rvert\leq2^{-10(n+1)}\lvert t\rvert\leq\lvert t\rvert/2$ for any $x\in G\cap (1/k_0,1-1/k_0)^n$ and any $t\in \R$ such that $x+te_1\in[1/2k_0,1-1/2k_0]^n$.
\item[(c)]$f_1$ and $g$ have $2^{-1}$-directional derivative along $e_1$ for any $x\in G$.
\end{itemize}

With these choices of $G$ and $M_1$ points (iv), (v) in the winning strategy are satisfied. Moreover  points (ii), (iii), (vi) and (vii) do not require verification.

\paragraph*{Construction of the $k$-th move of the Player II}
Let $k\geq 2$ and assume:
\begin{equation}
\lip([0,1]^n,\R^m)\supseteq U_1\supseteq V_1\supseteq\ldots\supseteq U_{k-1}\supseteq V_{k-1},\nonumber
\end{equation}
be the match that has been played up to step $k-1$. Moreover assume the functions $f_i$, the closed sets $\mathcal{A}_i$ and non-empty relatively open subsets $M_i$ of $\mathcal{A}_i$ which verify the required conditions have been already defined and suppose $U_k\subseteq V_{k-1}$ is the arbitrary $k$-th move of player I.

The answer of Player II starts with the choice of an arbitrary function $f_k\in U_k\cap \mathfrak{P}(n,m)$ and uses (iv) of the $(k-1)$-step to deduce that there exists an open set $G\subseteq (0,1)^n$ such that:
\begin{itemize}
\item[($\alpha$)] $w_{C(e_{k-1},\sigma_{k-1})}[(1/k_0,1-1/k_0)^n\setminus G]< w_{C(e_{k-1},\sigma_{k-1})}[M_{k-1}\cap E_{k-1}]$.
\item[($\beta$)]$\lvert f_k(x+te_{k-1})-f_k(x)-(f_{k-1}(x+te_{k-1})-f_{k-1}(x))\rvert\leq2^{-k+1}\lvert t\rvert$ for any $x\in G\cap (1/k_0,1-1/k_0)^n$ and any $t\in \R$ such that $x+te_{k-1}\in[1/2k_0,1-1/2k_0]^n$.
\item[($\gamma$)]$f_{k-1}$ and $f_k$ have $2^{-k+1}$-directional derivative along $e_{k-1}$ for any $x\in G$.
\end{itemize}
Let $8\sigma_k:=2^{-100(n+k)}\text{Card}(\Pi_{f_k})^{-2}\sigma_{k-1}$ and assume $\{u_i\}_{i\in\{1,\ldots, N_k\}}$ is a finite $\sigma_k/2$-dense set in $C(e_{k-1},\sigma_{k-1})$. Proposition \ref{spessspess}, point  ($\alpha$) and the fact that $M_{k-1}$ is a relatively open subset of $\mathcal{A}_{k-1}$ imply that the set $G\cap M_{k-1}\cap E_{k-1}$ is also a relatively open non-empty subset of $\mathcal{A}_{k-1}$. 

Therefore  Proposition \ref{splitter} implies that we can find and a closed subset $\mathcal{A}_k$ of $\mathcal{A}_{k-1}$ such that $G\cap M_{k-1}\cap E_{k-1}\cap\mathcal{A}_{k}\neq \emptyset$. In particular this implies that:
\begin{itemize}
    \item $\mathcal{A}_k$ has every portion of positive $C(u_i,\sigma_{k})$-width for some $i\in\{1,\ldots,N_k\}$,
    \item we can find a non-empty relatively open subset $\tilde{M}_k$ of $\mathcal{A}_{k-1}$ such that $\text{cl}(\tilde{M}_k)\subseteq G\cap M_{k-1}\cap E_{k-1}$.
\end{itemize}
We define $e_k:=u_i$ and note that since $E_k$ is a dense relatively open subset of $F$, the set $\tilde{M}_k\cap E_k$ is a non-empty relatively open set in $\mathcal{A}_k$ and thus $w_C(e_k,\sigma_k)[\tilde{M}_k\cap E_k]>0$.
Finally we let $\epsilon_k:=2^{-10(n+k)}(nk_0)^{-1}$ and:
$$\omega_k:=\min\{\epsilon_k,w_{C(e_k,\sigma_k)}(\tilde{M}_k\cap E_k)/(1+n^{1/2})\}.$$

Applying Proposition \ref{lemma1} to $f_k$, $\omega_k$, $e_k$, $\sigma_k$ and $\epsilon_k$ we get an open neighbourhood $V_k$ of $f_k$ (with $\text{diam}(V_k)<2^{-k}$) such that for every $g\in V_k\cap \mathfrak{P}(n,m)$ there exist an open set $\tilde{G}$ for which:
\begin{itemize}
\item[(a')] $w_{C(e_k,\sigma_k)}[(1/k_0,1-1/k_0)^n\setminus \tilde{G}]<w_{C(e_k,\sigma_k)}[\tilde{M}_k\cap E_k]$.
\item[(b')]$\lvert g(x+te_k)-g(x)-(f_k(x+te_k)-f_k(x))\rvert\leq\epsilon_k\lvert t\rvert\leq 2^{-10(n+k)}\lvert t\rvert\leq\lvert t\rvert/2^k$ for any $x\in \tilde{G}\cap (1/k_0,1-1/k_0)^n$ and any $t\in \R$ such that $x+te_k\in[1/2k_0,1-1/2k_0]^n$.
\end{itemize}

Moreover since $\sigma_k<\epsilon^2/2$, applying Proposition \ref{fortuna} to $f_k$, $g$, $e_k$, $\sigma_k$ and $\epsilon_k$ we obtain that $\text{cl}(\Xi(f_k,e_k,\epsilon_k))$ and $\text{cl}(\Xi(g,e_k,\epsilon_k))$ are $C(e_k,\sigma_k)$-null. We define:
\begin{itemize}
\item$M_k:=\tilde{M}_k\setminus\text{cl}(\Xi(f_k,e_k,\epsilon_k))$,
\item $G:=\tilde{G}\setminus(\text{cl}(\Xi(f_k,e_k,\epsilon_k))\cup \text{cl}(\Xi(g,e_k,\epsilon_k)))$,
\end{itemize}
and note that $M_k$ is still relatively open in $\mathcal{A}_k$ and $G$ is open. Thus $G$ is the open set associated to $g$ that satisfies to our requirements:
\begin{itemize}
\item[(a)] $w_{C(e_k,\sigma_k)}[(1/k_0,1-1/k_0)^n\setminus G]< w_{C(e_k,\sigma_k)}[M_k\cap E_k]$,
\item[(b)]$\lvert g(x+te_k)-g(x)-(f_k(x+te_k)-f_k(x))\rvert\leq\epsilon\lvert t\rvert\leq2^{-10(n+k)}\lvert t\rvert\leq\lvert t\rvert/2$ for any $x\in G\cap (1/k_0,1-1/k_0)^n$ and any $t\in \R$ such that $x+te_k\in[1/2k_0,1-1/2k_0]^n$.
\item[(c)]$f_k$ and $g$ have $2^{-k}$-directional derivative along $e_k$ for any $x\in G$.
\end{itemize}
On $M_k$ the function $f_k$ has $2^{-k}$-directional derivative along $e_k$ and since $M_i\subseteq M_k$ for any $i$, we have by induction hypothesis that (v) holds true. 
These choices of $G$ and $M_k$ satisfy points (ii) and (iii) in the winning strategy. Point (iv) holds by construction.

\paragraph*{Proof that the  built strategy is winning}
By construction $\text{cl}(V_{k-1})\subseteq V_k$ and thus, by Arzel\'a-Ascoli theorem and the finite intersection property of compact sets, we deduce that $\bigcap_k V_k\neq \emptyset$. Moreover, since $\text{diam}(V_k)$ converge to $0$, we also deduce that $\bigcap_k V_k=\{f\}\subseteq \lip([0,1]^n,\R^m)$. In the same way, since $\text{cl}(M_k)\subset M_{k-1}\cap E_{k-1}\subseteq F$, we deduce that:
$$\emptyset\neq \bigcap_{k=1}^\infty M_k\subseteq E.$$
Since $e_k\in C(e_{k-1},\sigma_{k-1})$ we have that $\{e_k\}$ converges to some $e\in\mathbb{S}^{n-1}$ and:
$$\lvert e_k-e\rvert\leq 2\sigma_k\leq 2^{-100k}.$$
Fix some $x\in\bigcap_{k=1}^\infty M_k$ and let $d_k$ be the $2^{-k}$-directional derivative along $e_k$ of $f_k$ at $x$. Note that thanks to compactness of the unitary ball of $\R^m$ we can find a sequence $\{k_j\}_{j\in\N}$ and $d\in B_1(0)$ for which $\lvert d_{k_{j}}-d\rvert\leq2^{-j}$. For any $j\geq 1$, since $f$ is $1$-Lipschitz, we have:
\begin{equation}
\begin{split}
&\limsup_{t\to 0}\left\lvert \frac{f(x+te)-f(x)-td}{t}\right\rvert\leq\frac{1}{2^{j-1}}+\limsup_{t\to 0}\left\lvert \frac{f(x+te_{k_j})-f(x)-td_{k_j}}{t}\right\rvert.
\label{numbero2010}
\end{split}
\end{equation}
Using the fact that $f_i$ converges uniformly to $f$ and repetitively applying the triangle inequality, the argument of the limit in the right-hand side of \eqref{numbero2010} becomes:
\begin{equation}
    \begin{split}
    \left\lvert \frac{f(x+te_{k_j})-f(x)-td_{k_j}}{t}\right\rvert\leq& \left\lvert\frac{f_{k_j}(x+te_{k_j})-f_{k_j}(x)-td_{k_j}}{t}\right\rvert\\
    +&\sum_{i=k_j}^\infty \left\lvert\frac{f_{i+1}(x+te_{i+1})-f_{i+1}(x)}{t}-\frac{f_i(x+te_i)-f_i(x)}{t}\right\rvert.
    \label{equ!}
    \end{split}
\end{equation}
Using the fact that $f_i$ are $1$-Lipschitz and the point (vii) of the winning strategy, we deduce that each term in the sum of the right-hand side of the inequality \eqref{equ!} can be estimated by:
\begin{equation}
    \begin{split}
    &\left\lvert\frac{f_{i+1}(x+te_{i+1})-f_{i+1}(x)}{t}-\frac{f_i(x+te_i)-f_i(x)}{t}\right\rvert\\
    \leq & \lvert e_i-e_{i+1}\rvert+\left\lvert\frac{f_{i+1}(x+te_i)-f_{i+1}(x)}{t}-\frac{f_i(x+te_i)-f_i(x)}{t}\right\rvert\leq \frac{1}{2^{i-2}}.
        \nonumber
    \end{split}
\end{equation}
Therefore the bound in \eqref{equ!} becomes:
\begin{equation}
    \begin{split}
    \left\lvert \frac{f(x+te_{k_j})-f(x)-td_{k_j}}{t}\right\rvert\leq \left\lvert\frac{f_{k_j}(x+te_{k_j})-f_{k_j}(x)-td_{k_j}}{t}\right\rvert
    +2^{3-k_j}.
    \nonumber
    \end{split}
\end{equation}
Since by construction $f_{k_j}$ is $2^{-k_j}$-differetiable along $e_{k_j}$ at $x$, the bound \eqref{numbero2010} becomes:
\begin{equation}
\begin{split}
&\limsup_{t\to 0}\left\lvert \frac{f(x+te)-f(x)-td}{t}\right\rvert\leq2^{1-j}+2^{-k_j} +2^{3-k_j},
\nonumber
\end{split}
\end{equation}
which by arbitrariness of $j$ concludes the proof.
\end{proof}

%
%

\end{document}